\newcommand*{\letitre}{Differential algebras of quasi-Jacobi forms of index zero}
\newcommand*{\lesmotscles}{Elliptic forms, Jacobi forms, formal deformations, Rankin-Cohen brackets}
\newcommand*{\lescodesMSC}{Primary: 11F50, 16S80. Secondary: 11F11, 11F25, 16W25, 53D55}
\newcommand*{\lesujet}{Math. Subj. Class~\lescodesMSC}
\newcommand{\leresume}{The notion of double depth associated with quasi-Jacobi forms allows distinguishing, within the algebra \(\AlgQJS\) of quasi-Jacobi singular forms of index zero, certain significant subalgebras (modular-type forms, elliptic-type forms, Jacobi forms). We study the stability of these subalgebras under the derivations of \(\AlgQJS\) and through certain sequences of bidifferential operators constituting analogs of Rankin-Cohen brackets or transvectants.}
\newcommand*\texten[1]{#1}
 \newlength{\h@uteurnumerateur}
 \newlength{\h@uteurdenominateur}
\newcommand*{\qd}[2]{
  \mathchoice%
  {
    \settoheight{\h@uteurnumerateur}{\ensuremath{\displaystyle{#1#2}}}%
    \settoheight{\h@uteurdenominateur}{\ensuremath{\displaystyle{#1#2}}}%
    \raisebox{0.5\h@uteurnumerateur}{\ensuremath{\displaystyle{#1}}}%
    \mkern-5mu\diagup\mkern-4mu%
    \raisebox{-0.5\h@uteurdenominateur}{\ensuremath{\displaystyle{#2}}}%
  }
  {
    \settoheight{\h@uteurnumerateur}{\ensuremath{\textstyle{#1#2}}}%
    \settoheight{\h@uteurdenominateur}{\ensuremath{\textstyle{#1#2}}}%
  \raisebox{0.2\h@uteurnumerateur}{\ensuremath{\textstyle{#1}}}%
  /%
  \raisebox{-0.2\h@uteurdenominateur}{\ensuremath{\textstyle{#2}}}%
  }
  {
    \settoheight{\h@uteurnumerateur}{\ensuremath{\scriptstyle{#1#2}}}%
    \settoheight{\h@uteurdenominateur}{\ensuremath{\scriptstyle{#1#2}}}%
  \raisebox{0.2\h@uteurnumerateur}{\ensuremath{\scriptstyle{#1}}}%
  /%
  \raisebox{-0.2\h@uteurdenominateur}{\ensuremath{\scriptstyle{#2}}}%
  }
  {
    \settoheight{\h@uteurnumerateur}{\ensuremath{\scriptscriptstyle{#1#2}}}%
    \settoheight{\h@uteurdenominateur}{\ensuremath{\scriptscriptstyle{#1#2}}}%
  \raisebox{0.2\h@uteurnumerateur}{\ensuremath{\scriptscriptstyle{#1}}}%
  /%
  \raisebox{-0.2\h@uteurdenominateur}{\ensuremath{\scriptscriptstyle{#2}}}%
  }
}
\DeclarePairedDelimiter{\abs}{\lvert}{\rvert}
\DeclarePairedDelimiterXPP\accol[3]{}\{\}{_{#3}}{
\DeclareMathOperator{\AC}{H}
\newcommand*{\AlgJS}{\mathrm{JS}}
\newcommand*{\AlgM}{\mathrm{M}}
\newcommand*{\AlgQJS}{\mathrm{JS}^{\infty}}
\newcommand*{\AlgQJell}{\mathrm{JS}^{0,\infty}}
\newcommand*{\AlgQJmod}{\mathrm{JS}^{\infty,0}}
\newcommand*{\AlgQM}{\mathrm{M}^{\infty}}
\newcommand*{\CC}{\mathbb{C}}
\DeclareMathOperator{\CM}{CM}
\DeclareMathOperator{\cQ}{Q}
\DeclarePairedDelimiterXPP\Crochet[3]{}\llbracket\rrbracket{_{#3}}{
	\ifblank{#1}{\phantom{f}}{#1},
	\ifblank{#2}{\phantom{f}}{#2}
	}
\DeclarePairedDelimiterXPP\crochet[3]{}\lbrack\rbrack{_{#3}}{
	\ifblank{#1}{\phantom{f}}{#1},
	\ifblank{#2}{\phantom{f}}{#2}
}
\DeclarePairedDelimiter{\projZ}{\lVert}{\rVert}
\newcommand*{\dtau}{\mathop{\partial}\nolimits_{\tau}}
\newcommand*{\dz}{\mathop{\partial}\nolimits_{z}}
\DeclareMathOperator{\fe}{e}
\DeclareMathOperator{\eisen}{e}
\DeclareMathOperator{\Eisen}{E}
\DeclareMathOperator{\fd}{d}
\newcommand*{\fds}{\fd_{\mathrm{S}}}
\newcommand*{\fdmod}{\fd^{\infty,0}_{\mathrm{S}}}
\newcommand*{\fdell}{\fd^{0,\infty}_{\mathrm{S}}}
\newcommand*{\fdq}{\fd^{\infty}_{\mathrm{S}}}
\newcommand*{\fonc}{{\CC^\pkC}}
\DeclareMathOperator{\ia}{I}
\newcommand*{\ic}{\mathrm{i}}
\DeclareMathOperator{\J}{J}
\newcommand*{\Jac}{{\sldz\ltimes\Z^2}}
\newcommand*{\jc}{\mathrm{j}}
\newcommand*{\JS}[1]{\mathrm{JS}_{#1}}
\newcommand*{\JSell}[1]{\mathrm{JS}^{0,\infty}_{#1}}
\newcommand*{\JSmod}[1]{\mathrm{JS}^{\infty,0}_{#1}}
\newcommand*{\N}{\mathbb{Z}_{\geq0}}
\newcommand{\Ob}{\mathop{\mathrm{Ob}}\nolimits^*}
\DeclareMathOperator{\ObCDMR}{\mathrm{Ob}}
\DeclarePairedDelimiterXPP\parent[3]{}\lparen\rparen{_{#3}}{
	\ifblank{#1}{\phantom{f}}{#1},
	\ifblank{#2}{\phantom{f}}{#2}
	}
\DeclareMathOperator{\pa}{P}
\newcommand*{\pk}{\mathcal{H}}
\newcommand*{\pkC}{{\pk\times\CC}}
\newcommand*{\planck}{\hbar}
\newcommand*{\QJS}[3]{\mathrm{JS}_{#1}^{\leq#2,#3}}
\newcommand*{\QJSpoids}[1]{\mathrm{JS}_{#1}^{\infty}}
\newcommand{\Reseau}{\mathcal{R}}
\newcommand*{\Sing}{\mathcal{S}}
\newcommand*{\SL}{\mathrm{SL}}
\newcommand*{\sldz}{\SL(2,\Z)}
\DeclareMathOperator{\wpdp}{W}
\DeclareMathOperator{\X}{X}
\DeclareMathOperator{\Y}{Y}
\newcommand*{\Z}{\mathbb{Z}}
\def\l@paragraph{\@tocline{4}{0pt}{5pc}{7pc}{}}
\gdef\csname r@tocindent4\endcsname{0pt}
\def\paragraph{\@startsection{paragraph}{4}%
  \z@{.5\linespacing\@plus.7\linespacing}{-.5em}%
  {\normalfont\itshape}}
\theoremstyle{plain}
\newtheorem{thm}{Theorem}
\newtheorem{lem}[thm]{Lemma}
\newtheorem{prop}[thm]{Proposition}
\newtheorem{cor}[thm]{Corollary}
\theoremstyle{definition}
\newtheorem{dfn}[thm]{Definition}
\theoremstyle{remark}
\newtheorem{rem}[thm]{Remark}
\newcommand\tikzmark[2]{%
\tikz[remember picture,baseline] \node[inner sep=2pt,outer sep=0] (#1){#2};%
}
\newcommand\link[2]{%
\begin{tikzpicture}[remember picture, overlay, >=stealth, shift={(0,0)}]
  \draw[->] (#1) to (#2);
\end{tikzpicture}%
}
\newcommand*{\rose}[1]{{#1}}
\newcommand*{\bleu}[1]{{#1}}
\newcounter{cellindex}
\newcolumntype{C}{>{\stepcounter{cellindex}%
\begin{tikzpicture}[remember picture,baseline=(Cell-\thecellindex-left.base),inner sep=0pt]
\node (Cell-\thecellindex-left){\strut};
\end{tikzpicture}
}c<{\begin{tikzpicture}[remember picture,baseline=(Cell-\thecellindex-right.base),inner sep=0pt]
\node (Cell-\thecellindex-right){\strut};
\end{tikzpicture}}} 
\newcommand{\MeasureLastTable}[1]{
\pgfmathtruncatemacro{\LeftCellIndex}{\thecellindex-1}
\pgfmathtruncatemacro{\AboveCellIndex}{\thecellindex-#1}
\begin{tikzpicture}[overlay,remember picture]
    \path let \p0 = (Cell-\thecellindex-right.east), 
    \p1 = (Cell-\thecellindex-left.west),
    \p2 = (Cell-\LeftCellIndex-right.east),
    \p3 = (Cell-\LeftCellIndex-left.west) in 
    \pgfextra{\pgfmathsetmacro{\tmp}{(\x0+\x1-\x2-\x3)/4}
    \xdef\HorSep{\tmp pt} 
    };
    \path let \p0 = (Cell-\thecellindex-right.north west), 
    \p1 = (Cell-\thecellindex-left.south west),
    \p2 = (Cell-\AboveCellIndex-right.north west),
    \p3 = (Cell-\AboveCellIndex-left.south west) in 
    \pgfextra{\pgfmathsetmacro{\tmp}{(\y2+\y3-\y0-\y1)/4}
    \xdef\VertSep{\tmp pt} 
    }; 
\end{tikzpicture}%
}
\newcommand{\CrossOut}[2][]{
\tikz[overlay,remember picture]{
\path let \p0 = (Cell-#2-left.west), 
    \p1 = (Cell-#2-right.east),
    \p2 = (Cell-#2-left.north west),
    \p3 = (Cell-#2-left.south west) in
    \pgfextra{\pgfmathsetmacro{\tmp}{\HorSep-(\x1-\x0)/2}
    \xdef\myxoffset{\tmp pt}
    \pgfmathsetmacro{\tmp}{\VertSep-(\y2-\y3)/2}
    \xdef\myyoffset{\tmp pt}
    };
\draw[thick,#1] ([xshift=-\myxoffset,yshift=-\myyoffset]Cell-#2-left.south west) -- 
([xshift=\myxoffset,yshift=\myyoffset]Cell-#2-right.north east);
}}
\begin{document}
\title{\letitre}
\author[F. Dumas]{Fran{ç}ois Dumas}
\address{%
Fran{ç}ois Dumas\\
Université Clermont Auvergne -- CNRS\\
Laboratoire de mathématiques Blaise Pascal -- UMR6620\\
F-63000 Clermont-Ferrand\\
France
}
\email{{francois.dumas@uca.fr}}
\author[F. Martin]{Fran{ç}ois Martin}
\address{%
Fran{ç}ois Martin\\
Université Clermont Auvergne -- CNRS\\
Laboratoire de mathématiques Blaise Pascal -- UMR6620\\
F-63000 Clermont-Ferrand\\
France
}
\email{{francois.martin@uca.fr}}
\author[E. Royer]{Emmanuel Royer}
\address{%
Emmanuel Royer\\
Université Clermont Auvergne -- CNRS -- Wolfgang Pauli Institute\\
Institut CNRS Pauli -- IRL2842 \\
A-1090 Wien\\
Autriche
}
\address{%
Emmanuel Royer\\
CNRS – Université de Montréal CRM – CNRS\\
IRL3457\\
Montréal\\
Canada
}
\email{{emmanuel.royer@math.cnrs.fr}}
\date{\DTMnow}
\thanks{The work of the last two authors is partially funded by the ANR-23-CE40-0006-01 Gaec project. The third author benefited from valuable discussions with Yuk-kam Lau and Ben Kane during visits to the \emph{Institute of Mathematical Research} at \emph{Hong Kong University}, made possible by the Hubert Curien Procore project \no~48166WK. He thanks Timothy Browning from the \emph{Institute of Science and Technology Austria} for his hospitality. For the purpose of open access publication, this version of the text is distributed under the \href{https://creativecommons.org/licenses/by/4.0/deed.en}{CC-BY} license.}
\keywords{\lesmotscles}
\subjclass[2020]{\lescodesMSC}
\begin{abstract}
\leresume
\begin{center}
\textbf{This text is essentially an automatic translation with the assistance of \emph{ChatGPT 4o} from the original French author last version.}
\end{center}
\end{abstract}

\maketitle
\tableofcontents
\newcommand{\lintro}{Introduction}
\section{\lintro}
This article presents an analytical and algebraic study of singular quasi-Jacobi forms of index zero. It particularly examines the stability under derivations of certain significant subalgebras (elliptic forms, quasi-Jacobi forms of quasielliptic type, quasi-Jacobi forms of quasimodular type), with the aim of constructing sequences of bidifferential operators that constitute formal deformations of these algebras, namely, Rankin-Cohen brackets or transvectants.

For the actions (parameterized by a nonnegative integer, the weight) of the modular group \(\sldz\) on the algebra of functions of a complex variable \(\tau\) in the Poincaré half-plane \(\pk\) with values in \(\CC\), it is well known that the algebra \(\AlgM\) of modular forms (graded by weight) is not stable under the derivation \(\partial_\tau\). There are at least two ways to overcome this obstruction. The first is to canonically construct a sequence of bidifferential operators in \(\partial_\tau\), known as Rankin-Cohen brackets, which stabilize \(\AlgM\) (cf. \cite{zbMATH05808162}) and which also constitute (cf. \cite{zbMATH05156388}, \cite{zbMATH07362171}, and \cite{zbMATH05808162}) a formal deformation of the algebra \(\AlgM\) (in the sense of \cite[\texten{Chapter}~13]{zbMATH06054532}). The second is to define above \(\AlgM\) the algebra \(\AlgQM\) of quasimodular forms, which is by construction stable under \(\partial_\tau\), graded by weight, and filtered by depth (cf. \cite{zbMATH05808162}, \cite{zbMATH06128504}). 
These two points of view are closely related since one method to show the stability of \(\AlgM\) by Rankin-Cohen brackets involves extending their definition to the algebra \(\AlgQM\) (see \cite[\texten{Section}~5]{MR1280058} or \cite[\texten{Proposition}~9]{zbMATH07362171}). A similar approach is proposed in this article for the action of the Jacobi group on functions in two variables. It requires revisiting various notions scattered throughout the literature on Jacobi forms and quasi-Jacobi forms in a formalized and unified context (see for example \cite{MR4281261}, \cite{zbMATH05953688}, \cite{MR0781735}, \cite{Fogliasso}).

In what follows, we consider the actions (parameterized by two nonnegative integers, the weight and the index) of the Jacobi group \(\Jac\) on functions of two complex variables \((\tau,z)\) from \(\pkC\) to \(\CC\). The notion of a singular Jacobi form follows from this (definition~\ref{eq_dfnJacobiSing}), with the term singular referring here to the analytical assumptions of periodicity and meromorphy necessary, which we clarify further in Definition~\ref{defsing}. Denoting \(\AlgJS_k\) as the vector space of singular Jacobi forms of index zero and weight \(k\), Theorem~\ref{prop_strucJS} describes the graded algebra \(\AlgJS=\bigoplus\AlgJS_k\) as the algebra of polynomials \(\CC[\wp,\partial_z\wp, \eisen_4]\), where \(\wp\) is the Weierstra\ss{} function and \(\eisen_4\) is the Eisenstein series of weight 4. Thus, it coincides with the algebra of elliptic forms in the sense of Definition~\ref{dfn_forfonell}. The end of the first section of the article is devoted to determining (proposition~\ref{prop_dimJS}) the dimension of the subspaces \(\AlgJS_k\).

The algebra \(\AlgJS\), like its subalgebra \(\AlgM\), is not stable under the derivation \(\partial_\tau\). This leads to the introduction in Section~\ref{sec_dectiondeux} of the notion of singular quasi-Jacobi forms of index zero, to which are attached by construction a weight \(k\in\N\) and a double depth \((s_1,s_2)\in\N^2\) (see Definition~\ref{dfn_JS}). 
These singular quasi-Jacobi forms are structured into an algebra \(\AlgQJS\) graded by weight and doubly filtered by depth, which Theorem~\ref{thm_strucQJ} describes as the algebra of polynomials in five variables \(\AlgQJS=\CC[\wp,\partial_z\wp, \eisen_4,\eisen_2,\Eisen_1]\), where \(\eisen_2\) is the Eisenstein series of weight \(2\) and depth \((1,0)\), and \(\Eisen_1\) is the first shifted Eisenstein function of weight \(1\) and depth \((0,1)\). The two intermediate subalgebras \({\AlgQJmod}=\CC[\wp,\partial_z\wp, \eisen_4,\eisen_2]\) and \({\AlgQJell}=\CC[\wp,\partial_z\wp, \eisen_4,\Eisen_1]\) between \(\AlgJS\) and \(\AlgQJS\) correspond to quasi-Jacobi forms of depth \((s_1,0)\) and \((0,s_2)\), respectively named quasimodular type and quasielliptic type. 

Section~\ref{sec_trois} of the article is dedicated to constructing formal deformations on each of the four algebras involved and their connections with the classical Rankin-Cohen brackets on the subalgebra \(\AlgM\). The derivation \(\partial_\tau\) of \(\AlgQJS\) being homogeneous of degree \(2\) for the weight, we can introduce in Proposition~\ref{RCtauQ} Rankin-Cohen brackets on \(\AlgQJS\) that constitute a formal deformation of \(\AlgQJS\) (see \cite{zbMATH05156388} and \cite{zbMATH07362171}, following the principle initiated in \cite{MR1280058}). Using the general algebraic arguments of \cite[\texten{Theorem}~6]{zbMATH07362171}, we demonstrate in Theorem~\ref{RCtauA} that the subalgebra \({\AlgQJell}\) is stable under these brackets, which extend those classically defined on \(\AlgM\). The same method allows us to obtain in Theorem~\ref{RCtauE} a formal deformation of the algebra \(\AlgJS\) of elliptic forms extending the Rankin-Cohen brackets on \(\AlgM\) by considering this time bidifferential operators in the derivation \(d=\partial_\tau+\frac14\Eisen_1\partial_z\) (see also with a different proof \cite[\texten{Proposition}~2.15]{zbMATH05953688}). For the case of quasi-Jacobi forms of quasimodular type, it is through a very different strategy based on the notion of transvectants from classical invariant theory (see \cite{zbMATH01516969}) that we obtain in Theorem~\ref{TransB} a formal deformation of the algebra \(\AlgQJmod\).
\section{Singular Jacobi forms}\label{sec_dectiondeux}
\subsection{Elliptic functions associated with a lattice}\label{sub_ellres}
\cite[\texten{Chapter}~V]{zbMATH05500775} 
Let \(\Reseau\) be a lattice in \(\CC\). A meromorphic function \(f\colon\CC\to\CC\) is said to be \emph{elliptic for \(\Reseau\)} if
\[
\forall z\in\CC\enspace\forall w\in \Reseau \quad f(z+w)=f(z). 
\] 
A fundamental example of such a function is the Weierstra\ss{} function associated with the lattice \(\Reseau\) defined by
\[
\forall z\in\CC-\Reseau\quad\wp_{\Reseau}(z)=\frac{1}{z^2}+\sum_{w\in \Reseau-\{0\}}\left(\frac{1}{(z-w)^2}-\frac{1}{w^2}\right). 
\]
Every even elliptic function is a rational function with complex coefficients in \(\wp_\Reseau\) \cite[\texten{Proposition}~V.3.2]{zbMATH05500775}.
For every even integer \(k\geq 4\), we define the complex number
\[
\eisen_{k,\Reseau}=\sum_{w\in \Reseau-\{0\}}w^{-k}. 
\]
The function \(\wp_\Reseau\) satisfies the differential equation 
\begin{equation}\label{eq_derivrho}
\left(\wp_{\Reseau}'\right)^2=\wpdp_\Reseau(\wp_\Reseau) \quad\text{with}\quad \wpdp_\Reseau(X)=4(X^3-15\eisen_{4,\Reseau}X-35\eisen_{6,\Reseau})\in\CC[X]. 
\end{equation}
If \(P_1\), \(Q_1\), \(P_2\), and \(Q_2\) are rational functions, we then have
\[
\left(P_1(\wp_\Reseau)+Q_1(\wp_\Reseau)\wp_{\Reseau}'\right)\left(P_2(\wp_\Reseau)+Q_2(\wp_\Reseau)\wp_{\Reseau}'\right)=\left(P_3(\wp_\Reseau)+Q_3(\wp_\Reseau)\wp_{\Reseau}'\right)
\]
with 
\[
P_3=P_1P_2+\wpdp_\Reseau Q_1Q_{2}\quad\text{and}\quad Q_3=P_1Q_2+Q_1P_2. 
\]

In particular, if \(P\) and \(Q\) are two rational functions in \(\CC(X)\) and if
\[
\widetilde{P}=\frac{P}{P^2-\wpdp_\Reseau Q^2} \quad\text{and}\quad\widetilde{Q}=-\frac{Q}{P^2-\wpdp_\Reseau Q^2}, 
\]
then
\[
\left(P(\wp_\Reseau)+Q(\wp_\Reseau)\wp_{\Reseau}'\right)\left(\widetilde{P}(\wp_\Reseau)+\widetilde{Q}(\wp_\Reseau)\wp_{\Reseau}'\right)=1. 
\]
Thus, the set
\[
\mathcal{E}(\Reseau)=\CC\left(\wp_\Reseau\right)\oplus\CC\left(\wp_\Reseau\right)\wp_{\Reseau}'
\]
is a field. Since \(\CC\left(\wp_\Reseau\right)\) is the field of even elliptic functions, and since if \(f\) is elliptic and odd, then the quotient \(f/\wp_{\Reseau}'\) is elliptic and even, we conclude that the field \(\mathcal{E}(\Reseau)\) is the set of elliptic functions for \(\Reseau\).
\subsection{Elliptic forms}
For all \(\lambda\in\CC^*\), we have
\begin{align}
\wp_{\lambda \Reseau}(z) &= \lambda^{-2}\wp_{\Reseau}\left(\lambda^{-1}z\right) \label{eq_trsfreseauW}\\
\eisen_{k,\lambda \Reseau} &=\lambda^{-k}\eisen_{k,\Reseau}\label{eq_trsfreseaue}
\end{align}
so that we can restrict ourselves to representatives of the equivalence classes of lattices by complex homothety. Any lattice having a basis \((w_1,w_2)\) with \(w_2/w_1\) belonging to the Poincaré half-plane \(\pk\) of complex numbers with strictly positive imaginary part, we restrict to the lattices \(\Z\oplus\tau\Z\) with \(\tau\in\pk\).

We then define, for every even integer \(k\geq 4\), the Eisenstein function of weight \(k\) by
\[
\begin{array}{ccccc}
\eisen_k & \colon & \pk & \to & \CC\\
& & \tau & \mapsto & \eisen_{k,\Z\oplus\tau\Z}. 
\end{array}
\]
This is a modular form of weight \(k\) on \(\sldz\) whose Fourier expansion\footnote{The integer \(\sigma_{k-1}(n)\) is \(\sum_{d\mid n}d^{k-1}\). The sequence \((B_n)_{n\geq 0}\) is defined by the generating series: \[\frac{t}{e^t-1}=\sum_{n=0}^{+\infty}B_n\frac{t^n}{n!}.\]} is given by
\begin{equation}\label{eq_devFEisen}
\eisen_k(\tau)=\frac{2^k\lvert B_{k}\rvert}{k!}\pi^k\left(1-\frac{2k}{B_{k}}\sum_{n=1}^{+\infty}\sigma_{k-1}(n)e^{2i\pi n\tau}\right) \quad(\text{\(k\geq 4\) even}).
\end{equation}
We define \(\eisen_2\) by extending this equality to \(k=2\). The function \(\eisen_2\) is not a modular form.

In a similar manner, we define the Weierstra\ss{} function by
\[
\begin{array}{ccccc}
\wp & \colon & \pkC & \to & \CC\\
& & (\tau,z) & \mapsto & \wp_{\Z\oplus\tau\Z}(z). 
\end{array}
\]
\begin{dfn}\label{dfn_forfonell}
We call an \emph{elliptic form} any element of the ring \(\CC[\wp,\dz\wp,\eisen_4]\) and an \emph{elliptic function} any element of the field of fractions \(\CC(\wp,\dz\wp,\eisen_4)\). 
\end{dfn}

\begin{rem}\label{rem_conventionunedeux}
We will also denote by \(\eisen_k\) the two-variable function induced by the Eisenstein function of weight \(k\), and we shall still call it the Eisenstein function of weight \(k\):
\begin{equation}\label{eq_conventionunedeux}
 \begin{array}{ccccc}
 \eisen_k & \colon &  \pkC & \to & \CC\\
& &  (\tau, z) & \mapsto & \eisen_k(\tau)
\end{array}
\qquad\text{(\(k \geq 2\) even). }
\end{equation}
\end{rem}

\begin{rem}
Our use of the term \emph{elliptic} refers to the theory of elliptic functions associated with a fixed lattice, described using the Weierstra\ss{} function associated with that lattice and its derivative. The intention to ``free'' the lattice requires adding \(\eisen_4\) and \(\eisen_6\). The differential equation
\begin{equation}\label{eq_differfonda}
(\dz\wp)^2 - 4\wp^3 + 60\eisen_4\wp + 140\eisen_6 = 0
\end{equation}
then forces us to remove the function \(\eisen_6\) from the generators: indeed, it can be expressed as a polynomial in the algebraically independent functions \(\wp\), \(\dz\wp\), and \(\eisen_4\) (see Theorem~\ref{prop_strucJS}). We will give another independent proof of~\eqref{eq_differfonda} later in this text (see equation~\eqref{eq_diffwp}).
\end{rem}


\subsection{Singular Jacobi forms of index zero}
\subsubsection{Action of the Jacobi group\texorpdfstring{ on \(\pkC\) et \(\fonc\)}{}}

The multiplicative group \(G=\sldz\) acts on the additive group \(H=\Z^2\) from the right by
\begin{equation*}\label{Gsur H}
\forall g=\begin{pmatrix}a & b\\ c & d\end{pmatrix}\in G, \ \forall \Lambda=(\lambda,\mu)\in H, \ \Lambda g=(\lambda a+\mu c,\lambda b+\mu d)
\end{equation*}
The Jacobi group is the semidirect product \(G\ltimes H=\Jac\) which is derived from this, with the product 
\begin{equation*}
\forall g,g'\in G, \ \forall \Lambda,\Lambda'\in H, \  (g,\Lambda)(g',\Lambda')=(gg', \Lambda g'+\Lambda').
\end{equation*}
The groups \(G\) and \(H\) act on \(\pkC\) from the left as follows:
\begin{align*}
\forall g=\begin{pmatrix}a & b\\ c & d\end{pmatrix}\in G, \ \forall(\tau,z)\in\pkC, \ g(\tau,z)&=\left(\frac{a\tau+b}{c\tau+d},\frac{z}{c\tau+d}\right),\\
\forall \Lambda=(\lambda,\mu)\in H, \ \forall(\tau,z)\in\pkC, \ \Lambda(\tau,z)&=(\tau,z+\lambda\tau+\mu).
\end{align*}
This leads to a right action \(\vert_G\) of \(G=\sldz\) and a right action \(\vert_H\) of \(H=\Z^2\) on the algebra of functions \(\fonc\) defined by
\begin{align}
\forall g=\begin{pmatrix}a & b\\ c & d\end{pmatrix}\in G, \ \forall f\in\fonc,\ &f\vert_G g : (\tau,z)\mapsto f\left(\frac{a\tau+b}{c\tau+d},\frac{z}{c\tau+d}\right)\label{actGsurFonc}\\
\forall \Lambda=(\lambda,\mu)\in H, \ \forall f\in\fonc,\ &f\vert_H\Lambda : (\tau,z)\mapsto f(\tau,z+\lambda\tau+\mu).\label{actHsurFonc}
\end{align}
These two actions are compatible in the sense that
\begin{equation*}
\forall g\in G,\ \forall\Lambda\in H,\ \forall f\in\fonc, \ (f\vert_G g)\vert_H \Lambda g=(f\vert_H \Lambda)\vert_G g.
\end{equation*}
This allows us to deduce a right action of the Jacobi group \(\Jac\) on the algebra of functions \(\fonc\):
\begin{equation}\label{actJsurFonc}
\forall g\in G,\ \forall\Lambda\in H,\ \forall f\in\fonc, \ f\vert_{G\ltimes H}(g,\Lambda)=(f\vert_G g)\vert_H \Lambda.
\end{equation}
In other words, for every \((\tau,z)\in\pkC\),
\begin{equation}
f\vert_{G\ltimes H}\left(\begin{pmatrix}a & b\\ c & d\end{pmatrix},(\lambda,\mu)\right)(\tau,z)=f\left(\frac{a\tau+b}{c\tau+d},\frac{z+\lambda\tau+\mu}{c\tau+d}\right). 
\end{equation}
More generally, if \(\nu\) is a map from \(\Jac\) to \(\fonc\), then the map 
\begin{equation}\label{actJsurFoncnu}
(f,(g,\Lambda))\mapsto\nu(g,\Lambda)\left(f\vert_{G\ltimes H}(g,\Lambda)\right)
\end{equation}
defines a right action of the Jacobi group on the algebra \(\fonc\) if and only if \(\nu\) is a 1-cocycle for the action~\eqref{actJsurFonc}, meaning it satisfies
\begin{equation}\label{def1cocycle}
\nu\left((g,\Lambda)(g',\Lambda')\right)=\left(\nu(g,\Lambda)\vert_{G\ltimes H}(g',\Lambda')\right)\nu(g',\Lambda').
\end{equation}
Such a 1-cocycle \(\nu\) can be obtained from a 1-cocycle \(\nu_G\) for the action~\eqref{actGsurFonc} of \(G\) and a 1-cocycle \(\nu_H\) for the action~\eqref{actHsurFonc} of \(H\) by setting
\begin{equation}\label{cocycleGH}\forall(g,\Lambda)\in G\ltimes H, \ \nu(g,\Lambda)=(\nu_G(g)\vert_H\Lambda)\nu_H(\Lambda)
\end{equation}
which satisfies relation~\eqref{def1cocycle} if and only if we have the compatibility condition
\begin{equation}\label{compatible1cocycle}\forall(g,\Lambda)\in G\ltimes H, \ \left(\nu_G(g)\vert_H\Lambda g\right)\nu_H(\Lambda g)=\nu_G(g)\left(\nu_H(\Lambda)\vert_G g\right).
\end{equation}

Let \(j\colon\sldz\to\fonc\) and \(\ell\colon\sldz\to\fonc\) be defined for \(g=\begin{psmallmatrix}a & b\\c & d\end{psmallmatrix}\) and \((\tau,z)\in\pkC\) by
\[
j(g)(\tau,z)=c\tau+d,\qquad\ell(g)(\tau,z)=\fe\left(-\frac{cz^2}{c\tau+d}\right)
\]
where \(\fe\colon\xi\mapsto\exp(2\ic\pi\xi)\). These are \(1\)-cocycles of \(\sldz\) into \(\fonc\). For all nonnegative integers \(k\) and \(m\), the application \(j^k\ell^m\) is therefore also a \(1\)-cocycle.

Let \(p\colon\Z^2\to\fonc\) be defined for \(\Lambda=(\lambda,\mu)\) and \((\tau,z)\in\pkC\) by
\[
p(\Lambda)(\tau,z)=\fe(\lambda^2\tau+2\lambda z). 
\]
This is a \(1\)-cocycle of \(\Z^2\) into \(\fonc\). For every nonnegative integer \(m'\), the application \(p^{m'}\) is also a \(1\)-cocycle.

Following the construction of~\eqref{cocycleGH}, we then consider the application
\[
\begin{array}{ccccc}
\nu_{k,m,m'}	& \colon	& \sldz\ltimes\Z^2	& \to		& \fonc\\
	& 		& (g,\Lambda) 		& \mapsto	& \left((j^k\ell^m)(g)\vert_{\Z^2}\Lambda\right) p^{m'}(\Lambda).
\end{array}
\]
The compatibility condition~\eqref{compatible1cocycle} is satisfied if and only if \(m'=m\), and we deduce that \(\nu_{k,m,m}=\nu_{k,m}\) is a \(1\)-cocycle for the action~\eqref{actJsurFoncnu} of \(\Jac\). 

Finally, if \(k\) and \(m\) are nonnegative integers, we define an action of \(\Jac\) on \(\fonc\) by
\begin{equation}\label{eq_vertkm}
\left(f\vert_{k,m}A\right)(\tau,z)=(c\tau+d)^{-k}\fe^{m}\left(-\frac{c(z+\lambda\tau+\mu)^2}{c\tau+d}+\lambda^2\tau+2\lambda z\right)f\left(\frac{a\tau+b}{c\tau+d},\frac{z+\lambda\tau+\mu}{c\tau+d}\right)
\end{equation}
for any \(A=(g,\Lambda)=\left(\begin{psmallmatrix}a & b\\c & d\end{psmallmatrix},(\lambda,\mu)\right) \in \Jac\), with \(\fe^{m}(\xi)=\exp(2\ic m\pi\xi)\).
%

\subsubsection{Definition and fundamental examples}\label{subsec_jacosingexple}
\begin{dfn}\label{defsing}
A function \(f\colon\pkC\to\CC\) is \emph{singular} if:
\begin{itemize}
\item for all \(\tau\in\pk\), the function \(z\mapsto f(\tau,z)\) is 1-periodic, meromorphic on \(\CC\), and its only poles are the points of the lattice \(\Z\oplus\tau\Z\), all of the same order, which is independent of \(\tau\);
\item the function \(\tau\mapsto f(\tau,z)\) is 1-periodic;
\item the Laurent coefficients of \(z\mapsto f(\tau,z)\) at \(0\) are holomorphic functions on \(\pk\) and at infinity.
\end{itemize}
We denote by \(\Sing\) the set of singular functions.
\end{dfn}

\begin{rem}
Let us clarify the third condition: let \(A_n\) be the \(n\)-th Laurent coefficient of \(z\mapsto f(\tau,z)\) at \(0\). By the second condition, the functions \(A_n\) are 1-periodic. We therefore require that they be holomorphic on \(\pk\) and have a Fourier expansion of the form
\[
A_n(\tau)=\sum_{r=0}^{+\infty}\widehat{A_n}(r)\fe(r\tau).
\]
\end{rem}

\begin{dfn}\label{eq_dfnJacobiSing}
Let \(k\) and \(m\) be nonnegative integers. A singular function \(f\colon\pkC\to\CC\) is a \emph{singular Jacobi form}\footnote{The definition of meromorphic Jacobi forms does not seem to be established. We draw inspiration from~\cite[\S~3.2]{zbMATH06346312}.} of index \(m\) and weight \(k\) if it satisfies \(f\vert_{k,m}A=f\) for all \(A\in\Jac\). 
\end{dfn}

Explicitly, a singular function is a singular Jacobi form of index \(m\) and weight \(k\) if and only if it satisfies the following two relations:
\begin{itemize}
\item for all \((\lambda,\mu)\in\Z^2\),
\begin{equation}\label{eq_trsfellf}
f(\tau,z+\lambda\tau+\mu)=e^{-2i\pi m(\lambda^2\tau+2\lambda z)}f(\tau,z)\text{ ;}
\end{equation}
\item for all \(\bigl(\begin{smallmatrix}a & b\\ c & d\end{smallmatrix}\bigr)\in\sldz\), 
\begin{equation}\label{eq_trsfmodf}
f\left(\frac{a\tau+b}{c\tau+d},\frac{z}{c\tau+d}\right)=e^{2i\pi mcz^2/(c\tau+d)}(c\tau+d)^kf(\tau,z)\text{ ;}
\end{equation}
\end{itemize}

\begin{rem}
After circulating an initial version of this work, Jan-Willem M. van Ittersum informed us about his article~\cite{zbMATH07634708}, in which he introduces the concept of \emph{strictly meromorphic Jacobi form}. We propose an alternative presentation of a particular case of his and, in particular, we emphasize the analytical context by avoiding the notion of meromorphic form in several variables.
\end{rem}

We fix a matrix \(g=\bigl(\begin{smallmatrix}a & b\\ c & d\end{smallmatrix}\bigr)\) in \(\sldz\) and \((\lambda, \mu) \in \Z^2\). We have
\[
\wp\left(\frac{a\tau+b}{c\tau+d},\frac{z}{c\tau+d}\right)=\wp_{\Z\oplus\frac{a\tau+b}{c\tau+d}\Z}\left(\frac{z}{c\tau+d}\right).
\]
Now, \(\Z\oplus\frac{a\tau+b}{c\tau+d}\Z=\frac{1}{c\tau+d}\left(\Z\oplus\tau\Z\right)\), the equality~\eqref{eq_trsfreseauW} then implies
\[
\wp_{\Z\oplus\frac{a\tau+b}{c\tau+d}\Z}\left(\frac{z}{c\tau+d}\right)=(c\tau+d)^2\wp_{\Z\oplus\tau\Z}(z)
\]
that is to say
\begin{equation}\label{eq_trsfmodwp}
\wp\left(\frac{a\tau+b}{c\tau+d},\frac{z}{c\tau+d}\right)=(c\tau+d)^2\wp(\tau,z).
\end{equation}
On the other hand, by the definition of elliptic functions
\begin{equation}\label{eq_trsfellwp}
\wp\left(\tau,z+\lambda\tau+\mu\right)=\wp(\tau,z).
\end{equation}
Let us denote \(\dz=\partial/\partial z\). By differentiating \eqref{eq_trsfmodwp} and \eqref{eq_trsfellwp}, we find
\begin{equation}\label{eq_trsfmoddzwp}
\left(\dz\wp\right)\left(\frac{a\tau+b}{c\tau+d},\frac{z}{c\tau+d}\right)=(c\tau+d)^3\dz\wp(\tau,z).
\end{equation}
and
\begin{equation}\label{eq_trsfelldzwp}
\left(\dz\wp\right)\left(\tau,z+\lambda\tau+\mu\right)=\dz\wp(\tau,z).
\end{equation}
With the convention described in Remark~\ref{rem_conventionunedeux}, the function \(\eisen_4\) satisfies the equation
\begin{equation}\label{ploufplouf}
\eisen_4\left(\frac{a\tau+b}{c\tau+d}, \frac{z}{c\tau+d}\right)=(c\tau+d)^4\eisen_4(\tau, z).
\end{equation}
The Laurent expansion of \(\wp\) is given by
\begin{equation}\label{eq_delLaurent}
\wp(\tau,z)=\frac{1}{z^2}+\sum_{n=1}^{+\infty}(2n+1)\eisen_{2n+2}(\tau)z^{2n}
\end{equation}
\cite[\texten{Proposition}~V.2.5]{zbMATH05500775}, which shows that \(\wp\) (and thus \(\dz\wp\)) are singular.

The relations \eqref{eq_trsfmodwp} to \eqref{eq_delLaurent} thus show that \(\wp, \dz\wp\) and \(\eisen_4\) are singular Jacobi forms of index zero and weights \(2, 3\), and \(4\), respectively. The remainder of the section aims 
to clarify the analytical framework of Proposition 2.8 of~\cite{zbMATH05953688}. This is a new response to a similar objective pursued by~\cite[Proposition 2.7]{zbMATH07634708}.

\begin{thm}\label{prop_strucJS}

\begin{enumerate}
\item The functions \(\wp, \dz\wp\) and \(\eisen_4\) are algebraically independent.
\item The algebra of elliptic forms is graded by weight. We denote \(\displaystyle\AlgJS=\CC[\wp,\dz\wp,\eisen_4]=\bigoplus_{k \in \N} \AlgJS_k\) where \(\AlgJS_k\) is the set of elements \(\displaystyle\sum_{\substack{(a,b,c)\in\N^3\\ 2a+3b+4c=k}}\alpha(a, b, c)\wp^a\left(\dz\wp\right)^b\eisen_4^c\) with \(\alpha(a, b, c) \in \CC\).
\item For all \(k \geq 0\), \(\AlgJS_k\) is the set of singular Jacobi forms of index zero and weight \(k\).
\end{enumerate}
\end{thm}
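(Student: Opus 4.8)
The plan is to prove the three assertions in order; the third is where the real work lies. For the algebraic independence~(1) I would run a transcendence-degree count. It is classical that $\eisen_4$ and $\eisen_6$ are algebraically independent over $\CC$ (a classical fact, part of the structure theorem $\AlgM=\CC[\eisen_4,\eisen_6]$). For every $\tau\in\pk$ the function $z\mapsto\wp(\tau,z)$ is non-constant meromorphic, hence transcendental over $\CC$; substituted into a hypothetical relation $\sum_ir_i(\eisen_4,\eisen_6)\wp^i=0$ it forces each $r_i$ to vanish along $\tau\mapsto(\eisen_4(\tau),\eisen_6(\tau))$, whose image is Zariski-dense in $\CC^2$, so $r_i=0$; thus $\operatorname{trdeg}_{\CC}\CC(\wp,\eisen_4,\eisen_6)\ge3$. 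Equation~\eqref{eq_differfonda} (the case $\Reseau=\Z\oplus\tau\Z$ of~\eqref{eq_derivrho}) makes $\dz\wp$ algebraic of degree $\le2$ over $\CC(\wp,\eisen_4,\eisen_6)$, so that transcendence degree equals $3$; the same relation gives $\eisen_6=\tfrac1{140}\bigl(4\wp^3-60\eisen_4\wp-(\dz\wp)^2\bigr)$, whence $\CC(\wp,\dz\wp,\eisen_4)=\CC(\wp,\dz\wp,\eisen_4,\eisen_6)$ also has transcendence degree $3$, i.e.\ $\wp,\dz\wp,\eisen_4$ are algebraically independent. Assertion~(2) is then immediate: the monomials $\wp^a(\dz\wp)^b\eisen_4^c$ form a $\CC$-basis of $\AlgJS$, and grouping them by the common value of $2a+3b+4c$ yields a vector-space decomposition which is multiplicative because weights add on the three generators.

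For~(3), the containment of $\AlgJS_k$ in the space of singular Jacobi forms of index $0$ and weight $k$ follows from the transformation formulas for $\wp,\dz\wp,\eisen_4$ obtained above (\eqref{eq_trsfmodwp}, \eqref{eq_trsfellwp}, \eqref{eq_trsfmoddzwp}, \eqref{eq_trsfelldzwp}, \eqref{ploufplouf}) together with the fact, recorded after~\eqref{eq_delLaurent}, that $\wp$ and $\dz\wp$ are singular: a product of index-$0$ Jacobi forms is an index-$0$ Jacobi form whose weight is the sum of the weights, and $\Sing$ is stable under sums and products. For the reverse inclusion, let $f$ be a singular Jacobi form of index $0$ and weight $k$, and write $\wp_\tau=\wp(\tau,\cdot)$, $\wp'_\tau=\dz\wp(\tau,\cdot)$. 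By~\eqref{eq_trsfellf} with $m=0$, $z\mapsto f(\tau,z)$ is elliptic for $\Z\oplus\tau\Z$ with poles confined to that lattice; split it into its $z$-even and $z$-odd parts, $f(\tau,\cdot)=f_++f_-$. Then $f_+$ is an even elliptic function with no pole off the lattice, hence a polynomial in $\wp_\tau$ (a non-polynomial rational function of $\wp_\tau$ would have poles outside $\Z\oplus\tau\Z$). At each half-period $w_0$ one has $f_-(\tau,w_0)=f_-(\tau,-w_0)=-f_-(\tau,w_0)$, so $f_-$ vanishes there; since $\wp'_\tau$ has simple zeros, precisely at the half-periods, $f_-/\wp'_\tau$ is an even elliptic function with poles confined to the lattice, hence again a polynomial in $\wp_\tau$. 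Consequently
\[
f(\tau,z)=\sum_{i\ge0}\alpha_i(\tau)\,\wp(\tau,z)^i+\sum_{j\ge0}\beta_j(\tau)\,\wp(\tau,z)^j\,\dz\wp(\tau,z),
\]
a finite sum, the bounds on $i$ and $j$ being independent of $\tau$ because the $z$-pole order of $f$ is.

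To conclude I would identify the coefficient functions. Comparing Laurent expansions at $z=0$ — those of $\wp^i$ and $\wp^j\dz\wp$ have leading coefficients $1$ and $-2$, pairwise distinct pole orders, and all subsequent coefficients in $\CC[\eisen_4,\eisen_6]$ — writes each $\alpha_i$ and $\beta_j$ as a $\CC[\eisen_4,\eisen_6]$-combination of Laurent coefficients of $f$, hence as a function holomorphic on $\pk$ and at infinity. Since $\{\wp_\tau^i\}_{i\ge0}\cup\{\wp_\tau^j\wp'_\tau\}_{j\ge0}$ is $\CC$-linearly independent for every $\tau$, inserting the above expansion into~\eqref{eq_trsfmodf} and using~\eqref{eq_trsfmodwp},~\eqref{eq_trsfmoddzwp} gives, for every $g=\begin{psmallmatrix}a&b\\c&d\end{psmallmatrix}\in\sldz$,
\[
\alpha_i\!\left(\tfrac{a\tau+b}{c\tau+d}\right)=(c\tau+d)^{k-2i}\alpha_i(\tau),\qquad \beta_j\!\left(\tfrac{a\tau+b}{c\tau+d}\right)=(c\tau+d)^{k-2j-3}\beta_j(\tau).
\]
With the holomorphy just noted, $\alpha_i$ is a holomorphic modular form of weight $k-2i$ and $\beta_j$ one of weight $k-2j-3$, each being zero when that weight is negative or odd. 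By $\AlgM=\CC[\eisen_4,\eisen_6]$, the $\alpha_i$ and $\beta_j$ are weight-homogeneous polynomials in $\eisen_4,\eisen_6$; substituting $\eisen_6=\tfrac1{140}\bigl(4\wp^3-60\eisen_4\wp-(\dz\wp)^2\bigr)$ displays $f$ as an element of $\CC[\wp,\dz\wp,\eisen_4]$, homogeneous of weight $k$, i.e.\ $f\in\AlgJS_k$.

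The main obstacle is this converse inclusion. Reducing $z\mapsto f(\tau,z)$ to the normal form $A(\wp_\tau)+B(\wp_\tau)\wp'_\tau$ rests on the vanishing of $f_-$ at the half-periods and on upgrading ``rational in $\wp_\tau$'' to ``polynomial in $\wp_\tau$'' via the localization of the poles on the lattice; and carrying information from $f$ down to the $\alpha_i,\beta_j$ forces one to track their weights, their parities (hence which of them vanish) and their holomorphy at the cusp, in order to apply the structure theorem for modular forms. The ``singular'' hypothesis is used precisely here: it supplies the $\tau$-uniform bound on the pole order that keeps the family $(\alpha_i,\beta_j)$ finite, and the holomorphy of these coefficients at infinity.
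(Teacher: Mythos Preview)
Your argument is correct, and the overall architecture matches the paper's, but two of the key steps are carried out differently.

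For~(1) you run a transcendence-degree count, whereas the paper specialises at the half-period: it shows that $\eisen_4$ and $\widetilde{\wp}\colon\tau\mapsto\wp(\tau,\tau/2)$ are algebraically independent (via the cubic $\widetilde{\wp}^3-15\eisen_4\widetilde{\wp}-35\eisen_6=0$ combined with the independence of $\eisen_4,\eisen_6$), and then kills a hypothetical relation $\sum_i f_i(\eisen_4,\wp)(\dz\wp)^i=0$ by evaluating at $z=\tau/2$, where $\dz\wp$ vanishes. Your approach is more systematic and reusable; the paper's is shorter but ad hoc.

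For~(3) the skeletons agree---express $z\mapsto f(\tau,z)$ as a polynomial in $\wp_\tau,\wp'_\tau$ and then recognise the $\tau$-coefficients as modular forms via Laurent matching and $\AlgM=\CC[\eisen_4,\eisen_6]$---but the organisation differs. The paper first invokes $-I\in\sldz$ to force $f$ to be $z$-even or $z$-odd according to the parity of $k$; the even case proceeds directly, while the odd case is reduced to it by multiplying by $\dz\wp$, after which the absence of a pole of $f$ at $z=\tau/2$ (together, again, with the independence of $\eisen_4$ and $\widetilde{\wp}$) is needed to eliminate the spurious $1/\dz\wp$ term. Your direct even/odd splitting of $f$ in $z$ handles both parities uniformly and sidesteps this second appeal to the half-period specialisation, at the modest cost of carrying the two coefficient families $(\alpha_i)$ and $(\beta_j)$ in parallel; the vanishing of the ``wrong-parity'' family then drops out of the weight count at the end.
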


\begin{proof}
Let us first show the algebraic independence of \(\wp\), \(\dz\wp\), and \(\eisen_4\). For all \(\tau \in \pk\), we have \(\dz\wp\left(\tau, \dfrac{\tau}2\right)=0\) according to \cite[\texten{Lemma}~V.2.8]{zbMATH05500775}. Thanks to~\eqref{eq_differfonda}, there is an algebraic dependence relation among the functions \(\eisen_4\), \(\eisen_6\), and \(\widetilde{\wp}\colon\tau\mapsto\wp(\tau,\tau/2)\):
\[
\widetilde{\wp}^3-15\eisen_4\widetilde{\wp}-35\eisen_6=0.
\]
Since \(\eisen_4\) and \(\eisen_6\) are algebraically independent, we conclude that the functions \(\eisen_4\) and \(\widetilde{\wp}\) are also algebraically independent. Assume now that \(\wp\), \(\dz\wp\), and \(\eisen_4\) are algebraically dependent. There would exist an integer \(N\geq 1\) and a non-zero sequence of complex numbers \(\alpha_{k,\ell}^{(i)}\) such that 
\[
\sum_{i=0}^Nf_i(\dz\wp)^i=0\quad\text{with}\quad f_i=\sum_{k,\ell}\alpha_{k,\ell}^{(i)}\eisen_4^\ell\wp^k. 
\]
By specializing this equality at \(z=\tau/2\), we show that \(\tau \mapsto f_0\left(\tau, \dfrac{\tau}2\right)\) is zero, and by induction, all \(\tau \mapsto f_i\left(\tau, \dfrac{\tau}2\right)\) are zero. By the algebraic independence of \(\eisen_4\) and \(\widetilde{\wp}\), it follows that all the \(\alpha_{k,\ell}^{(i)}\) are zero, leading to a contradiction. This proves point (1). Point (2) follows from Definition~\ref{dfn_forfonell}.

Now let us prove (3). By applying~\eqref{eq_trsfmodf} to \((a,b,c,d)=(-1,0,0,-1)\), any singular Jacobi form of index zero and weight \(k\) is even in the variable \(z\) if \(k\) is even and odd in the variable \(z\) if \(k\) is odd. 

Let \(f\) be a singular Jacobi form of index zero and weight \(k\). For all \(\tau\), the function \(z\mapsto f(\tau,z)\) is an elliptic function associated with the lattice \(\Z\oplus\tau\Z\), whose poles are points of the lattice. In \(\qd{\CC}{\Z\oplus\tau\Z}\), this function therefore has at most one pole (which can be multiple), and that is at \(0\). 

\paragraph{Case of even weight}
If \(k\) is even, for all \(\tau\) there exists \(P_\tau\in\CC[X]\) such that 
\[
f(\tau,z)=P_\tau\left(\wp(\tau,z)\right) 
\] 
and the degree \(n_0\) of \(P_{\tau}\) is half the order of the pole of \(z\mapsto f(\tau,z)\) at \(0\) \cite[\texten{Proposition}~V.3.1]{zbMATH05500775}. It is therefore independent of \(\tau\), and there exist functions \(a_0,\dotsc,a_{n_0}\) of \(\pk\) into \(\CC\) such that
\begin{equation}\label{eq_poljacpair}
f(\tau,z)=\sum_{j=0}^{n_0}a_j(\tau)\wp(\tau,z)^j.
\end{equation}
Considering~\eqref{eq_trsfmodf} and \eqref{eq_trsfmodwp}, we have
\[
\sum_{j=0}^{n_0}(c\tau+d)^{2j}a_j\left(\frac{a\tau+b}{c\tau+d}\right)\wp(\tau,z)^j=\sum_{j=0}^{n_0}(c\tau+d)^{k}a_j(\tau)\wp(\tau,z)^j.
\]
The family \((\wp_{{\Z\oplus\tau\Z}}^j)_{j\in\mathbb{N}}\) is linearly independent. We deduce that each \(a_j\) is a weakly modular function\footnote{In the sense of~\cite{MR0498338}, that is, meromorphic on the Poincaré half-plane and satisfying modularity relations;.} of weight \(k-2j\).

Let us show that the \(a_j\) are holomorphic on \(\pk\) and at infinity. The equality between the Laurent expansion
\[
\sum_{n=0}^{+\infty}A_n(\tau)z^{2n-2n_0}
\]
of \(z \mapsto f(\tau, z)\) at \(0\) and the equality~\eqref{eq_poljacpair} leads, thanks to~\eqref{eq_delLaurent}, to 
\[
\sum_{n=0}^{+\infty}A_n(\tau)z^{2n}=\sum_{j=0}^{n_0}a_j(\tau)\left(\sum_{r=0}^{+\infty}\epsilon_r(\tau)z^{2r}\right)^jz^{2n_0-2j}
\]
where the holomorphic functions \(\epsilon_r\) on \(\pk\) are defined by \(\epsilon_0=1\), \(\epsilon_1=0\), and \(\epsilon_r=(2r-1)\eisen_{2r}\) if~\(r\geq 2\). We deduce
\[
A_r=a_{n_0-r}+\sum_{j=n_0-r+1}^{n_0}a_j\sum_{\alpha_1+\dotsm+\alpha_j=r+j-n_0}\epsilon_{\alpha_1}\dotsm\epsilon_{\alpha_j}.
\]
By induction, we obtain that the functions \(a_j\) are holomorphic on \(\pk\) and at infinity.

Finally, the functions \(a_j\) are modular forms, hence elements of \(\CC[\eisen_4,\eisen_6]\). Thus, a singular Jacobi form of index zero and even weight is an element of \(\CC[\eisen_4,\eisen_6,\wp]\subset\CC[\eisen_4,\wp,(\dz\wp)^2]\subset\CC[\eisen_4,\wp,\dz\wp]\).

\paragraph{Case of odd weight} If \(k\) is odd, then \(f\dz\wp\) is a singular Jacobi form of index zero and even weight~\(k+3\). We conclude that \(f\dz\wp\in\CC[\eisen_4,\wp,(\dz\wp)^2]\) and that there exist polynomials \(P\) and \(Q\) such that
\[
f=\frac{1}{\dz\wp}P(\eisen_4,\wp)+Q(\eisen_4,\wp,(\dz\wp)^2)\dz\wp.
\]
For all \(\tau \in \pk\), the function \(z \mapsto f(\tau, z)\) does not have a pole at \(z=\tau/2\), hence \[P\left(\eisen_4,\widetilde{\wp}\right)=0.\] By the algebraic independence of \(\eisen_4\) and \(\widetilde{\wp}\), the polynomial \(P\) must be zero. Thus \(f \in \CC[\wp,\dz\wp,\eisen_4]\).
\end{proof}
\subsubsection{Dimension ot the space \texorpdfstring{\(\AlgJS_k\)}{of singular Jacobi forms}}
For any integer \(k \geq 0\), a basis of the space \(\AlgJS_k\) is 
\begin{equation}\label{eq_base}
\left\{\wp^a(\dz\wp)^b\eisen_4^c \colon (a,b,c)\in\N^3, 2a+3b+4c=k\right\}. 
\end{equation}
The equation \(2a+3b+4c=k\) is equivalent to \(4a+6b=2k-8c\), and since the algebra \(\CC[\eisen_4, \eisen_6]\) of modular forms for \(\sldz\) is generated by a function of weight \(4\) and one of weight \(6\), we deduce that
\begin{equation}\label{eq_frommodular}
\dim\JS{k}=\sum_{c=0}^{\lfloor k/4 \rfloor}\fd(2k-8c)
\end{equation}
where for all \(j\in\N\), \(\fd(j)\) denotes the dimension of the space of modular forms of weight \(j\), explicitly given by
\begin{equation}\label{eq_d}
\fd(j)=\left\lfloor\frac{j}{12}\right\rfloor+\begin{cases*}
0 & if \(12\) divides \(j-2\) \\
1 & otherwise. 
\end{cases*}
\end{equation}
Although there are no modular forms of negative weights, and \(\fd(j)\) should be zero for \(j<0\), we adopt a different convention to proceed with the following calculations, focusing not on the modular aspect of \(\fd\) but rather on its combinatorial aspect. We extend the definition of \(\fd\) by~\eqref{eq_d} to all integers \(\Z\). Then, we have \(\fd(j+12)=\fd(j)+1\) for all \(j\in\Z\). 

Let \(x\) be a real number, and let \(\projZ{x}\) denote the nearest integer to \(x\) (with the convention \(\projZ{n+1/2}=n\) for all \(n \in \Z\)).
\begin{prop}\label{prop_dimJS}
For any natural number \(k\), the dimension \(\fds(k)\) of the space of singular Jacobi forms of index zero and weight \(k\) is given by 
\begin{equation}\label{eq_dimexplicit}
\fds(k)=\dim\JS{k}=
\projZ*{\frac{\left(k+3\eta(k)\right)^2}{48}}\quad\text{with}\quad\eta(k)=\begin{dcases*} 1 & if \(k\) is odd\\ 2 & otherwise. \end{dcases*}
\end{equation}
The generating series of these dimensions is
\[
\sum_{k\in\N}\fds(k)\cdot z^k=\frac{1}{(1-z^2)(1-z^3)(1-z^4)}
\]
and we have the recurrence relations: 
\[
\fds(2k+3)=\fds(2k)\quad\text{and}\quad\fds(2k+13)=\fds(2k+1)+k+5
\]
for all integers \(k\). The first values are given by
\[
\begin{array}{|c||c|c|c|c|c|c|c|c|}
\hline
k & 0 & 1 & 2  & 4 & 6 & 8 & 10 & 12\\
\hline
\fds(k) & 1 & 0 & 1 & 2 & 3 & 4 & 5 & 7\\
\hline
\end{array}
\]

\end{prop}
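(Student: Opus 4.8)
The plan is to establish the three assertions of the proposition in the order: first the generating series, then the closed formula, then the recurrences and the table. The starting point is the basis~\eqref{eq_base} of $\AlgJS_k$, which exhibits $\AlgJS=\CC[\wp,\dz\wp,\eisen_4]$ as a polynomial ring in three algebraically independent generators of weights $2$, $3$, $4$ (Theorem~\ref{prop_strucJS}). Consequently $\fds(k)$ is exactly the number of triples $(a,b,c)\in\N^3$ with $2a+3b+4c=k$, and the generating series is the product of the three geometric series $\sum_{a}z^{2a}$, $\sum_{b}z^{3b}$, $\sum_{c}z^{4c}$, which gives $1/\bigl((1-z^2)(1-z^3)(1-z^4)\bigr)$ immediately. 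This is the easy part; no estimate is needed, only the structure theorem.

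For the explicit formula I would start from the alternative description~\eqref{eq_frommodular}, $\fds(k)=\sum_{c=0}^{\lfloor k/4\rfloor}\fd(2k-8c)$, using the extended combinatorial $\fd$ defined on all of $\Z$ by~\eqref{eq_d}, together with the relation $\fd(j+12)=\fd(j)+1$. The idea is to write $j=2k-8c$ and sum: as $c$ runs over $0,\dots,\lfloor k/4\rfloor$ the argument $j$ decreases in steps of $8$, so I would split the sum according to residues modulo $3$ (equivalently group the terms in blocks where $j$ changes by $24$, over which $\fd$ is affine). A cleaner route is to prove~\eqref{eq_dimexplicit} directly by checking that the right-hand side $R(k)=\projZ{(k+3\delta(k))^2/48}$ satisfies the same recurrence as $\fds$. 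Concretely, since $\delta(k+6)=\delta(k)$ one has $(k+6+3\delta(k+6))^2=(k+3\delta(k))^2+12\bigl(k+3\delta(k)\bigr)+36$; dividing by $48$ and using that $12(k+3\delta(k))+36$ is divisible by $12$ shows $R(k+6)-R(k)$ is a computable integer (piecewise linear in $k$), and one matches this against $\fds(k+6)-\fds(k)$, which from the generating series equals the number of representations gained, namely $\lfloor k/2\rfloor+\lfloor(k+4)/3\rfloor$ or a similar small closed form obtained by convolving the three geometric series modulo the relevant periods. A finite check on a full period of residues $k\bmod 12$ (or $k\bmod 24$) against the base cases in the table then pins down the constant and completes the induction.

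For the recurrences, $\fds(2k+3)=\fds(2k)$ is transparent from~\eqref{eq_base}: the map $(a,b,c)\mapsto(a,b+1,c)$ is a bijection between solutions of $2a+3b+4c=2k$ and solutions of $2a+3b+4c=2k+3$ (it is well-defined and injective, and surjective because any solution of the odd equation must have $b\ge 1$ since $2a+4c$ is even). Equivalently it is the statement that multiplication by $\dz\wp$ is an isomorphism $\AlgJS_{2k}\to\AlgJS_{2k+3}$, which also follows from the even/odd analysis in the proof of Theorem~\ref{prop_strucJS}. The relation $\fds(2k+13)=\fds(2k+1)+k+5$ I would get from $\fds(k+12)=\fds(k)+(\text{number of solutions with }c\ge\text{ large})$—more precisely, from~\eqref{eq_frommodular} and $\fd(j+12)=\fd(j)+1$ one finds $\fds(k+12)-\fds(k)$ telescopes to $\lfloor k/4\rfloor+1$ plus a boundary correction; specializing to $k=2k'+1$ and simplifying yields $k'+5$. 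Finally the displayed small values are read off by listing the monomials $\wp^a(\dz\wp)^b\eisen_4^c$ of each weight, which is an immediate check and also a sanity test for the induction constant.

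The main obstacle is purely bookkeeping: reconciling the ceiling-rounding operator $\projZ{\cdot}$ and the auxiliary parameter $\delta(k)$ with the step-$8$ summation of the combinatorial $\fd$. The subtlety is that $\projZ{\cdot}$ is not additive, so one cannot simply push it through the sum; the safe strategy is the recurrence-matching over a complete residue class modulo $12$ (or $24$) combined with the finitely many initial values, which reduces the whole formula to a bounded computation and avoids any delicate manipulation of nested floor/round functions.
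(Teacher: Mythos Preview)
Your proposal is correct and close in spirit to the paper's proof, but the paper organizes the argument more efficiently. Both start identically with the generating series via the monomial basis~\eqref{eq_base}. After that, the paper proves the two stated recurrences \emph{first}: the identity $\fds(2k+3)=\fds(2k)$ is read off from the factored generating series for $\fds(2k)$ and $\fds(2k+1)$ (you give instead the pleasant bijection $(a,b,c)\mapsto(a,b+1,c)$, which is equally valid and arguably more transparent), and the identity $\fds(2k+13)=\fds(2k+1)+k+5$ is obtained from~\eqref{eq_frommodular} together with $\fd(j+24)=\fd(j)+2$, exactly as you sketch. The paper then uses precisely these two recurrences to establish the closed formula: one checks that $\varphi(k)=(k+3\delta(k))^2/48$ (hence $\projZ{\varphi}$) satisfies the same two relations, so equality with $\fds$ reduces to the eight initial values in the table.

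Your plan instead tries to prove the closed formula via a separate $k\mapsto k+6$ shift, matching $R(k+6)-R(k)$ against $\fds(k+6)-\fds(k)$. This would work, but it is messier (the difference $\fds(k+6)-\fds(k)$ is not as clean as you hope, and you already signal uncertainty with ``or a similar small closed form''), and it duplicates effort since you later establish the two recurrences anyway. Reordering your proof to match the paper's---recurrences first, then closed formula via those same recurrences---eliminates the vague step and the redundant period-$12$ or period-$24$ case analysis you anticipate as the ``main obstacle''.
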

\begin{proof}
By counting the elements of the basis~\eqref{eq_base} of \(\JS{k}\), we find that the generating series of \(\fds\) is
\[
\sum_{k\in\N}\#\left\{(a,b,c)\in\N^3\colon k=2a+3b+4c\right\}z^k=\sum_{a\in\N}z^{2a}\sum_{b\in\N}z^{3b}\sum_{c\in\N}z^{4c}=\frac{1}{(1-z^2)(1-z^3)(1-z^4)}.
\]
We then deduce
\[
\sum_{k\in\N}\fds(2k)z^k=\frac{1}{(1-z)^3(1+z)(1+z+z^2)}
\]
and
\[
\sum_{k\in\N}\fds(2k+1)z^k=\frac{z}{(1-z)^3(1+z)(1+z+z^2)}.
\]
This immediately gives us
\begin{equation}\label{eq_pairimpair}
\fds(2k)=\fds(2k+3)
\end{equation}
for all integers \(k\). Considering~\eqref{eq_frommodular} and the extension to \(\Z\) of~\eqref{eq_d}, we get
\[
\fds(2k+13)=\fds(2k+1)+\sum_{c=\left\lfloor\frac{2k+1}{4}\right\rfloor+1}^{\left\lfloor\frac{2k+1}{4}\right\rfloor+3}\fd(4k-8c+2)+2\left\lfloor\frac{2k+1}{4}\right\rfloor+8. 
\]
From this, we deduce the second recurrence relation:
\begin{equation}\label{eq_impairimpair}
\fds(2k+13)=\fds(2k+1)+k+5
\end{equation}
for all integers \(k\). 

The function \(\varphi : k\mapsto\frac{\left(k+3\eta(k)\right)^2}{48}\) also satisfies the relations~\eqref{eq_pairimpair} and~\eqref{eq_impairimpair}, and therefore so does \(\projZ{\varphi}\). We conclude that
\(\displaystyle\fds(k)=
\projZ*{\frac{\left(k+3\eta(k)\right)^2}{48}}\)
for all integers \(k\), by comparing the values for \(k\in\{0,1,2,4,6,8,10,12\}\).
\end{proof}
\begin{rem}
From the generating series of \(\left(\fds(k)\right)_{k\in\N}\), we deduce that this sequence is \(\left(t(k+3)\right)_{k\in\N}\), where \(t\) is the Alcuin sequence~\cite{AlcuinSeq}. The explicit formula is then proven in~\cite[\texten{Theorem}~1]{zbMATH06071095}. The equations~\eqref{eq_pairimpair} and~\eqref{eq_impairimpair} are given in this context in~\cite{zbMATH03646988} and proven in~\cite{zbMATH03654914}.
\end{rem}
\begin{rem}
We can systematically obtain similar formulas for the dimensions of the spaces considered in this text. A discussion on these formulas is provided in appendix.
\end{rem}
\subsubsection{Application to the differential equation of the Weierstra\ss{} function}
The modular form \(\eisen_6\) is a singular Jacobi form of index zero and weight \(6\). The dimension of \(\JS{6}\) is \(3\), with a basis being \(\left((\dz\wp)^2,\wp^3,\wp\eisen_4\right)\). Thus, \(\eisen_6\) is a linear combination of these three functions. By identifying the terms in \(z^{-6}\), \(z^{-2}\), and \(z^0\) in the Laurent expansion at \(z=0\), we obtain:
\begin{equation}\label{eq_diffwp}
\eisen_6=-\frac{1}{140}(\dz\wp)^2+\frac{1}{35}\wp^3-\frac{3}{7}\wp\eisen_4. 
\end{equation}
Thus, we recover the differential equation of the Weierstra\ss{} \(\wp\) function, which is central in the theory of elliptic curves \cite[\texten{Theorem}~V.3.4]{zbMATH05500775}.

\section{Singular quasi-Jacobi forms of index zero}\label{sec_trois}
\subsection{Action and differentiation}
The action of \(\Jac\) on \(\pkC\) is given by the map \(\AC\):
\begin{equation}\label{eq_defAC}
\begin{array}{ccccc}
\AC	& \colon	& \Jac	& \to		&	\left(\pkC\right)^{\pkC}\\
	&		& A=(g, \Lambda)=\left(\begin{pmatrix}a & b\\ c & d\end{pmatrix},(\lambda,\mu)\right)		& \mapsto	&	\begin{array}{ccc}
									\pkC		& \to		& \pkC\\
									(\tau,z)	& \mapsto	& A\cdot(\tau,z)=\left(\frac{a\tau+b}{c\tau+d},\frac{z+\lambda\tau+\mu}{c\tau+d}\right). 
								\end{array}	 
\end{array}								
\end{equation}
By the definition of an action, we have
\begin{equation}\label{eq_Acac}
\AC(AB)=\AC(A)\circ\AC(B).
\end{equation}
We calculate
\begin{equation}\label{eq_derivH}
\frac{\partial\AC}{\partial\tau}=\left(\frac{1}{\J^2},-\frac{\Y}{\J}\right)\quad\text{and}\quad\frac{\partial\AC}{\partial z}=\left(0,\frac{1}{\J}\right)
\end{equation}
with
\[
\begin{array}{ccccc}
\J	& \colon	& \Jac	& \to		&	\fonc\\
	&		& \left(\begin{psmallmatrix}a & b\\c & d\end{psmallmatrix},(\lambda,\mu)\right)		& \mapsto	&	\begin{array}{ccc}
									\pkC		& \to		& \CC\\
									(\tau,z)	& \mapsto	& c\tau+d
								\end{array}	 
\end{array}								
\]
and
\[
\begin{array}{ccccc}
\Y	& \colon	& \Jac	& \to		&	\fonc\\
	&		& \left(\begin{psmallmatrix}a & b\\c & d\end{psmallmatrix},(\lambda,\mu)\right)		& \mapsto	&	\begin{array}{ccc}
									\pkC		& \to		& \CC\\
									(\tau,z)	& \mapsto	& \dfrac{cz+c\mu -d\lambda}{c\tau+d}. 
								\end{array}	 
\end{array}								
\]
By defining
\[
\begin{array}{ccccc}
\X	& \colon	& \Jac	& \to		&	\fonc\\
	&		& \left(\begin{psmallmatrix}a & b\\c & d\end{psmallmatrix},(\lambda,\mu)\right)		& \mapsto	&	\begin{array}{ccc}
									\pkC		& \to		& \CC\\
									(\tau,z)	& \mapsto	& \dfrac{c}{c\tau+d}
								\end{array}	 
\end{array}								
\]
we have 
\begin{subequations}\label{eq_derivees}
\begin{alignat}{5}
\frac{\partial\J}{\partial\tau}	& =\X\J	&\qquad\qquad& &\frac{\partial\Y}{\partial\tau}	& =-\X\Y	&\qquad\qquad& & \frac{\partial\X}{\partial\tau}	&=-\X^2\label{eq_deriveestau}\\
\frac{\partial\J}{\partial z}		&=0			&& &\frac{\partial\Y}{\partial z}	&=\X	&& &\frac{\partial\X}{\partial z}	&=0.\label{eq_deriveesz}
\end{alignat}
\end{subequations}
It is clear that the functions \(\J\), \(X\), and \(\Y\) are algebraically independent over \(\CC\).

It follows from \eqref{eq_derivees} that the algebra \(\CC[\J,\X,\Y]\) is stable under the differentiation with respect to \(\tau\) and \(z\).
The proof of the following proposition shows that the notion of a cocycle allows us to understand the derivatives of the action with respect to \(z\) and \(\tau\).
\begin{prop}\label{prop_JXYcocycles}

We have, for the functions \(\J\), \(\X\), and \(\Y\) and the action defined in \eqref{eq_vertkm}, the following 1-cocycle relations: \(\forall(A, B)\in\left(\sldz\ltimes\Z^2\right)^2 \)
\[ 
\J(AB)=\left(\J(A)\vert_{0,0}B\right)\J(B), \quad \Y(AB)=\Y(A)\vert_{1,0}B+\Y(B),\quad \X(AB)=\X(A)\vert_{2,0}B+\X(B).
\]
\end{prop}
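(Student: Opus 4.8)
The plan is to reduce the three assertions to the single functional identity \eqref{eq_Acac}, namely $\AC(AB)=\AC(A)\circ\AC(B)$, and then extract each cocycle relation by differentiating it. The preliminary observation is that for index $m=0$ the action \eqref{eq_vertkm} simplifies: writing $B=(g',(\lambda',\mu'))$, one has $(f\vert_{k,0}B)(\tau,z)=\J(B)(\tau,z)^{-k}\,(f\circ\AC(B))(\tau,z)$ for every $f\in\fonc$. Consequently the three identities to be proved are precisely
\[
\J(AB)=\bigl(\J(A)\circ\AC(B)\bigr)\J(B),\qquad \Y(AB)=\frac{\Y(A)\circ\AC(B)}{\J(B)}+\Y(B),\qquad \X(AB)=\frac{\X(A)\circ\AC(B)}{\J(B)^{2}}+\X(B),
\]
and the translation back through the $m=0$ specialization of \eqref{eq_vertkm} (with $k=0,1,2$) then gives the statement.

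For the first identity I would differentiate $\AC(AB)=\AC(A)\circ\AC(B)$ with respect to $z$ and read off the second coordinate using \eqref{eq_derivH}: on the left, $\partial_z$ of the second component of $\AC(AB)$ is $1/\J(AB)$; on the right, since the first component of $\AC(A)$ does not depend on its second argument, the chain rule produces $\bigl(1/\J(A)\circ\AC(B)\bigr)\cdot\bigl(1/\J(B)\bigr)$, and comparing gives the $\J$-relation (this could equally be done by a two-line matrix computation together with the Möbius cocycle identity). For the second identity I would differentiate $\AC(AB)=\AC(A)\circ\AC(B)$ with respect to $\tau$ and again read off the second coordinate: by \eqref{eq_derivH} the left side equals $-\Y(AB)/\J(AB)$, while the chain rule, using $\partial_\tau\AC=(1/\J^{2},-\Y/\J)$ for both factors, expresses the right side as $-\dfrac{\Y(A)\circ\AC(B)}{\bigl(\J(A)\circ\AC(B)\bigr)\J(B)^{2}}-\dfrac{\Y(B)}{\bigl(\J(A)\circ\AC(B)\bigr)\J(B)}$; multiplying through by $\J(A)\circ\AC(B)$ and then by $\J(B)$, and substituting the already-proven $\J$-relation for $\J(AB)$, yields the $\Y$-relation. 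Finally, for the third identity the cleanest route is to differentiate the $\Y$-relation itself with respect to $z$: by \eqref{eq_deriveesz} we have $\partial_z\J=0$ and $\partial_z\Y=\X$, and since the first component of $\AC(B)$ is $z$-independent the chain rule turns $\partial_z\bigl(\Y(A)\circ\AC(B)\bigr)$ into $\bigl(\X(A)\circ\AC(B)\bigr)/\J(B)$, which is exactly what is needed (alternatively, differentiate the $\J$-relation in $\tau$ using $\partial_\tau\J=\X\J$ from \eqref{eq_deriveestau}).

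The computations are entirely routine. The only point requiring care is the bookkeeping in the chain rule — systematically distinguishing a function of $(\tau,z)$ from that same function precomposed with $\AC(B)$ and then evaluated at $(\tau,z)$ — together with checking that these precomposition-and-rescaling operations are exactly $\vert_{0,0}B$, $\vert_{1,0}B$, and $\vert_{2,0}B$. No genuine obstacle is anticipated.
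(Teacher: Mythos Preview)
Your proposal is correct and follows essentially the same route as the paper: derive the $\Y$-relation by differentiating \eqref{eq_Acac} in $\tau$ and comparing second coordinates (using the already-established $\J$-relation), then obtain the $\X$-relation by differentiating the $\Y$-relation in $z$ via \eqref{eq_deriveesz} and \eqref{eq_derivH}. The only cosmetic difference is that you also extract the $\J$-relation from \eqref{eq_Acac} by a $z$-differentiation, whereas the paper simply declares it well known and easy to verify.
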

\begin{proof}
The first relation on \(\J\) is well known and easy to verify. 
For the second formula, we differentiate~\eqref{eq_Acac} with respect to \(\tau\). Denoting \(\AC=(\AC_1,\AC_2)\), we find:
\[
\frac{1}{\J(AB)}\left(\frac{1}{\J(AB)},-\Y(AB)\right)=\frac{\partial\AC(A)}{\partial\tau}\left(\AC(B)\right)\frac{\partial\AC_1(B)}{\partial\tau}+\frac{\partial\AC(A)}{\partial z}\left(\AC(B)\right)\frac{\partial\AC_2(B)}{\partial\tau}
\]
which, using~\eqref{eq_derivH}, leads to
\[
\left(\frac{1}{\J(AB)(x)^2},-\frac{\Y(AB)(x)}{\J(AB)(x)}\right)=\left(\frac{1}{\J(A)(Bx)^2\J(B)(x)^2},-\frac{\Y(A)(Bx)}{\J(A)(Bx)\J(B)(x)^2}-\frac{\Y(B)(x)}{\J(A)(Bx)\J(B)(x)}\right)
\]
where we have denoted \(x=(\tau,z)\). Comparing the second coordinates and using the previous formula, we obtain 
\( \Y(AB)(x)=\J(B)(x)^{-1}\Y(A)(Bx)+\Y(B)(x)\)
which proves the desired relation.

Next, differentiating the cocycle relation of \(\Y\) with respect to \(z\), we find
\[
\frac{\partial\Y(AB)}{\partial z}(x)=\frac{1}{\J(B)(x)}\frac{\partial\Y(A)}{\partial\tau}(Bx)\frac{\partial\AC_1(B)}{\partial z}(x)+\frac{1}{\J(B)(x)}\frac{\partial\Y(A)}{\partial z}(Bx)\frac{\partial\AC_2(B)}{\partial z}(x)+\frac{\partial\Y(B)}{\partial z}(x).
\]
Thanks to~\eqref{eq_deriveesz} and~\eqref{eq_derivH}, we deduce
\[
\X(AB)(x)=\J(B)(x)^{-2}\X(A)(Bx)+\X(B)(x). 
\]
This is the cocycle relation of \(\X\). 
\end{proof}

\subsection{Definition}
\begin{dfn}\label{dfn_JS}
A singular function \(f\colon\pkC\to\CC\) is called a \emph{quasi-Jacobi singular form} (of index zero), of weight \(k\in\N\) and of depth \((s_1,s_2)\in\N^2\) if there exist \(\left(f_{j_1,j_2}\right)_{\substack{0\leq j_1\leq s_1\\ 0\leq j_2\leq s_2}}\in\Sing^{(s_1+1)(s_2+1)}\) such that
\begin{equation}\label{eq_trsfJac}
\forall A\in\Jac\qquad f\vert_{k,0}A=\sum_{j_1=0}^{s_1}\sum_{j_2=0}^{s_2}f_{j_1,j_2}\X(A)^{j_1}\Y(A)^{j_2}.
\end{equation}
where \(f_{s_1, s_2}\) is not identically zero. From now on, we agree to denote \(f\vert_kA :=f\vert_{k, 0}A\), and we will only consider forms of index zero.
It follows from the algebraic independence of \(\X\) and \(\Y\) over \(\CC\) that the decomposition \eqref{eq_trsfJac} is unique. We then define \(\cQ_{j_1,j_2}(f)=f_{j_1,j_2}\), and we call \(s_1\) the \emph{modular depth} of \(f\) and \(s_2\) its \emph{elliptic depth}. The vector space of quasi-Jacobi singular forms of weight \(k\) and depths less than or equal to \(s_1\) and \(s_2\) is denoted by \(\QJS{k}{s_1}{s_2}\); the vector space of quasi-Jacobi singular forms of weight \(k\) is denoted by \(\QJSpoids{k}\).
\end{dfn}

\begin{rem}
The notion introduced here is consistent with that of~\cite[\S2.4]{zbMATH07634708}. While that article presents an approach \emph{via} the notion of \emph{almost Jacobi form}, we favor a direct approach.
\end{rem}

\begin{rem}\label{jacobisinguliereprofnulle}\leavevmode
The choice \(A=\left(\begin{psmallmatrix}1 & 0\\0 & 1\end{psmallmatrix},(0,0)\right)\) implies that \(\cQ_{0,0}(f)=f\). This particularly implies that \(\QJS{k}{0}{0}\) is the space \(\JS{k}\) of Jacobi singular forms of index zero and weight \(k\), as previously encountered.
\end{rem}
\begin{rem}
\begin{itemize}
 \item 
Let \(f \in \QJSpoids{k}\) and \(g \in \QJSpoids{\ell}\), then we have \(fg \in \QJSpoids{k+\ell}\) and 
\[
\cQ_{i,j}(fg)=\sum_{\substack{(\alpha,\beta,\gamma,\delta)\\\alpha+\beta=i\\\gamma+\delta=j}}\cQ_{\alpha,\gamma}(f)\cQ_{\beta,\delta}(g).
\]
\item It follows from the algebraic independence of \(\X, \Y\), and \(\J\) over \(\CC\) that the spaces \(\QJSpoids{k}\) are in direct sum. We can therefore consider the algebra graded by the weight \(\displaystyle\AlgQJS=\bigoplus_{k \in \N}\QJSpoids{k}\), which we will agree to call the algebra of quasi-Jacobi singular forms.
\end{itemize}
\end{rem}

\subsection{Stability under differentiation}
The derivation with respect to \(z\) is zero on the algebra \(\AlgM\) of modular forms. However, \(\AlgM\) is not stable under differentiation with respect to \(\tau\), which justifies the introduction of the algebra \(\AlgQM\) of quasimodular forms\cite{zbMATH06128504,zbMATH05050117}.

The algebra \(\AlgJS\) of singular Jacobi forms is stable under differentiation with respect to \(z\) but is not stable under differentiation with respect to \(\tau\) (as will be seen later, see Remark~\vref{rem_profdz}, \eqref{eq_ddeuxzrho} and \eqref{eq_dtaudzrho}). Here, we show that the algebra \(\AlgQJS\) is stable under each of these derivations. 
\begin{lem}\label{lem_derivAC}
Let \(f\colon\pkC\to\CC\) be differentiable with respect to each variable, then
\begin{equation}\label{eq_derivACz}
\frac{\partial\left(f\vert_kA\right)}{\partial z}=\left.\left(\frac{\partial f}{\partial z}\right)\right\rvert_{k+1} A
\end{equation}
and
\begin{equation}\label{eq_derivACtau}
\frac{\partial\left(f\vert_kA\right)}{\partial\tau}=-k\left(f\vert_kA\right)\X(A)+\left.\left(\frac{\partial f}{\partial\tau}\right)\right\rvert_{k+2} A-\Y(A)\left.\left(\frac{\partial f}{\partial z}\right)\right\rvert_{k+1} A.
\end{equation}
\end{lem}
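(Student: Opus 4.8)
The plan is to compute the two derivatives directly from the explicit formula~\eqref{eq_vertkm} for the slash action, writing $A=(g,\Lambda)=\left(\begin{psmallmatrix}a & b\\c & d\end{psmallmatrix},(\lambda,\mu)\right)$ and setting, for brevity, $u=\dfrac{a\tau+b}{c\tau+d}$ and $v=\dfrac{z+\lambda\tau+\mu}{c\tau+d}$, so that $\left(f\vert_kA\right)(\tau,z)=(c\tau+d)^{-k}f(u,v)$ (recall the index is zero, so the exponential factor disappears). First I would record the partial derivatives of $u$ and $v$: one has $\dfrac{\partial u}{\partial z}=0$, $\dfrac{\partial v}{\partial z}=\dfrac{1}{c\tau+d}=\dfrac{1}{\J(A)}$, $\dfrac{\partial u}{\partial\tau}=\dfrac{1}{(c\tau+d)^2}=\dfrac{1}{\J(A)^2}$, and $\dfrac{\partial v}{\partial\tau}=-\dfrac{c(z+\lambda\tau+\mu)}{(c\tau+d)^2}+\dfrac{\lambda}{c\tau+d}$; a short simplification of this last expression, using $v=\dfrac{z+\lambda\tau+\mu}{c\tau+d}$ and the definition of $\Y(A)$ as $\dfrac{cz+c\mu-d\lambda}{c\tau+d}$, shows that $\dfrac{\partial v}{\partial\tau}=-\dfrac{\Y(A)}{\J(A)}$ (this is just the second coordinate of~\eqref{eq_derivH}).

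For~\eqref{eq_derivACz}: differentiating $(c\tau+d)^{-k}f(u,v)$ with respect to $z$ and using $\partial u/\partial z=0$ gives $(c\tau+d)^{-k}\dfrac{\partial f}{\partial z}(u,v)\cdot\dfrac{1}{c\tau+d}=(c\tau+d)^{-(k+1)}\dfrac{\partial f}{\partial z}(u,v)$, which is exactly $\left(\dfrac{\partial f}{\partial z}\right)\big\vert_{k+1}A$ evaluated at $(\tau,z)$. For~\eqref{eq_derivACtau}: differentiating with respect to $\tau$ produces three terms. The first comes from differentiating the prefactor $(c\tau+d)^{-k}$, giving $-kc(c\tau+d)^{-k-1}f(u,v)=-k\left(f\vert_kA\right)\X(A)$ since $\X(A)=\dfrac{c}{c\tau+d}$. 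The second comes from the $u$-dependence: $(c\tau+d)^{-k}\dfrac{\partial f}{\partial\tau}(u,v)\cdot\dfrac{1}{(c\tau+d)^2}=(c\tau+d)^{-(k+2)}\dfrac{\partial f}{\partial\tau}(u,v)=\left(\dfrac{\partial f}{\partial\tau}\right)\big\vert_{k+2}A$. The third comes from the $v$-dependence: $(c\tau+d)^{-k}\dfrac{\partial f}{\partial z}(u,v)\cdot\left(-\dfrac{\Y(A)}{\J(A)}\right)=-\Y(A)\,(c\tau+d)^{-(k+1)}\dfrac{\partial f}{\partial z}(u,v)=-\Y(A)\left(\dfrac{\partial f}{\partial z}\right)\big\vert_{k+1}A$. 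Summing the three terms gives~\eqref{eq_derivACtau}.

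There is really no hard step here: the statement is an instance of the chain rule combined with the cocycle bookkeeping already encapsulated in the functions $\J$, $\X$, $\Y$. The only point requiring minor care is the algebraic simplification of $\partial v/\partial\tau$ into the form $-\Y(A)/\J(A)$, and this is precisely what~\eqref{eq_derivH} records; alternatively one can simply invoke~\eqref{eq_derivH} directly rather than recomputing it, writing $f\vert_kA=(\J(A))^{-k}\,(f\circ\AC(A))$ and applying the chain rule to $f\circ\AC(A)$ together with $\partial\J/\partial\tau=\X\J$ from~\eqref{eq_deriveestau}. I would present the argument in the latter, more conceptual form: expand $\dfrac{\partial}{\partial\tau}\big((\J(A))^{-k}f(\AC(A))\big)$ by the product rule, use $\dfrac{\partial\J(A)}{\partial\tau}=\X(A)\J(A)$ for the first factor and $\dfrac{\partial\AC(A)}{\partial\tau}=\left(\dfrac{1}{\J(A)^2},-\dfrac{\Y(A)}{\J(A)}\right)$ from~\eqref{eq_derivH} for the composition, and then recognize each resulting term as the appropriate slash operator $\vert_{k+1}A$ or $\vert_{k+2}A$ applied to $\partial f/\partial z$ or $\partial f/\partial\tau$. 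The $z$-derivative is the same computation with $\dfrac{\partial\AC(A)}{\partial z}=\left(0,\dfrac{1}{\J(A)}\right)$ and $\dfrac{\partial\J(A)}{\partial z}=0$.
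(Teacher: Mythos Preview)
Your proof is correct and follows exactly the approach of the paper, which simply states that one differentiates the identity \(f\vert_kA=\J(A)^{-k}f(\AC(A))\) with respect to \(z\) and \(\tau\) using~\eqref{eq_derivH} and~\eqref{eq_derivees}. Your ``more conceptual form'' at the end is precisely the paper's one-line argument; the explicit coordinate computation you give first is just an unpacking of it.
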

\begin{proof}
The result is obtained by differentiating with respect to \(z\) and \(\tau\) the definition \(f\vert_kA=\J(A)^{-k}f\left(\AC(A)\right)\), then using~\eqref{eq_derivH} and~\eqref{eq_derivees}.
\end{proof}
\begin{prop}\label{prop_stabder}
The algebra \(\AlgQJS\) is stable under differentiation with respect to \(z\) and \(\tau\). The derivation \(\partial/\partial z\) maps \(\QJS{k}{s_1}{s_2}\) into \(\QJS{k+1}{s_1+1}{s_2}\); the derivation \(\partial/\partial\tau\) maps \(\QJS{k}{s_1}{s_2}\) into \(\QJS{k+2}{s_1+1}{s_2+1}\). Furthermore, for \(f \in \QJSpoids{k}\),
\[
\cQ_{j_1,j_2}\left(\frac{\partial f}{\partial z}\right)=\frac{\partial\cQ_{j_1,j_2}(f)}{\partial z}+(j_2+1)\cQ_{j_1-1,j_2+1}(f)
\]
and
\[
\cQ_{j_1,j_2}\left(\frac{\partial f}{\partial\tau}\right)=\frac{\partial\cQ_{j_1,j_2}(f)}{\partial\tau}+\frac{\partial\cQ_{j_1,j_2-1}(f)}{\partial z}+(k-j_1+1)\cQ_{j_1-1,j_2}(f).
\]
More precisely,
\[
\frac{\partial}{\partial z}\QJS{k}{s_1}{s_2}\subseteq\QJS{k+1}{s_1+1}{s_2-1}+\QJS{k+1}{s_1}{s_2}
\]
and
\[
\frac{\partial}{\partial\tau}\QJS{k}{s_1}{s_2}\subseteq\QJS{k+2}{s_1+1}{s_2}+\QJS{k+2}{s_1}{s_2+1}.
\]
\end{prop}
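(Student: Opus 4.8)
The plan is to differentiate the defining identity~\eqref{eq_trsfJac} directly and read off the cocycle components of the two derivatives from Lemma~\ref{lem_derivAC}. Fix $f\in\QJS{k}{s_1}{s_2}$ and write $f\vert_kA=\sum_{j_1=0}^{s_1}\sum_{j_2=0}^{s_2}f_{j_1,j_2}\,\X(A)^{j_1}\Y(A)^{j_2}$ with $f_{j_1,j_2}=\cQ_{j_1,j_2}(f)\in\Sing$. I would begin with $\dz$. By~\eqref{eq_derivACz} we have $\bigl(\dz f\bigr)\big\vert_{k+1}A=\dz(f\vert_kA)$, and since $\dz\X(A)=0$ and $\dz\Y(A)=\X(A)$ by~\eqref{eq_deriveesz}, differentiating the sum gives
\[
\bigl(\dz f\bigr)\big\vert_{k+1}A=\sum_{j_1,j_2}\bigl(\dz f_{j_1,j_2}\bigr)\X(A)^{j_1}\Y(A)^{j_2}+\sum_{j_1,j_2}j_2\,f_{j_1,j_2}\,\X(A)^{j_1+1}\Y(A)^{j_2-1}.
\]
Re-indexing the second sum by $(j_1,j_2)\mapsto(j_1-1,j_2+1)$ and invoking uniqueness of the decomposition~\eqref{eq_trsfJac} (algebraic independence of $\X$ and $\Y$) yields at once that $\dz f$ is quasi-Jacobi singular of weight $k+1$, the formula $\cQ_{j_1,j_2}(\dz f)=\dz\cQ_{j_1,j_2}(f)+(j_2+1)\cQ_{j_1-1,j_2+1}(f)$, and, reading off the index ranges, the membership $\dz f\in\QJS{k+1}{s_1+1}{s_2}$ together with the sharper splitting $\QJS{k+1}{s_1+1}{s_2-1}+\QJS{k+1}{s_1}{s_2}$, since the component of $\X$-degree $s_1+1$ arises only from $\cQ_{s_1,j_2+1}(f)$ and hence carries $\Y$-degree at most $s_2-1$.

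For $\dtau$ I would rearrange~\eqref{eq_derivACtau} into
\[
\bigl(\dtau f\bigr)\big\vert_{k+2}A=\dtau(f\vert_kA)+k\,(f\vert_kA)\,\X(A)+\Y(A)\bigl(\dz f\bigr)\big\vert_{k+1}A,
\]
and expand the three terms against~\eqref{eq_trsfJac}, using $\dtau\X(A)=-\X(A)^2$ and $\dtau\Y(A)=-\X(A)\Y(A)$ from~\eqref{eq_deriveestau} and the expression for $\bigl(\dz f\bigr)\big\vert_{k+1}A$ just obtained. Collecting the coefficient of $\X(A)^{j_1}\Y(A)^{j_2}$, one finds three contributions proportional to $\cQ_{j_1-1,j_2}(f)$ — coming from the differentiation of the $\X(A),\Y(A)$-powers in $\dtau(f\vert_kA)$, from the multiplication by $k\X(A)$, and from the "second half" of $\Y(A)\bigl(\dz f\bigr)\big\vert_{k+1}A$ — with respective coefficients $-(j_1-1+j_2)$, $k$ and $j_2$, whose sum collapses to $k-j_1+1$; adding the term $\dz\cQ_{j_1,j_2-1}(f)$ from the last summand and $\dtau\cQ_{j_1,j_2}(f)$ from the first gives the announced formula for $\cQ_{j_1,j_2}(\dtau f)$. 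Hence $\dtau f\in\QJS{k+2}{s_1+1}{s_2+1}$, and the refinement $\QJS{k+2}{s_1+1}{s_2}+\QJS{k+2}{s_1}{s_2+1}$ follows by the same inspection, since an $\X$-degree $s_1+1$ occurs only through $(k-s_1)\cQ_{s_1,j_2}(f)$ and thus with $\Y$-degree at most $s_2$, while a $\Y$-degree $s_2+1$ occurs only through $\dz\cQ_{j_1,s_2}(f)$ and thus with $\X$-degree at most $s_1$.

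It remains to check that the coefficients produced above genuinely lie in $\Sing$, which reduces to the stability of $\Sing$ under $\dz$ and $\dtau$; this is verified directly from Definition~\ref{defsing} (termwise differentiation of the Laurent expansion at $0$ in $z$, which keeps the poles at lattice points and the order bounded independently of $\tau$, and of the Fourier expansions of its coefficients in $\tau$). I expect the only genuine work to be combinatorial — matching each re-indexed sum to the correct $(j_1,j_2)$-slot and, in the $\tau$-case, spotting the cancellation $-(j_1-1+j_2)+k+j_2=k-j_1+1$ that merges three contributions of $\cQ_{j_1-1,j_2}(f)$ into one; no analytic input beyond Lemma~\ref{lem_derivAC} and the derivation formulas~\eqref{eq_derivees} is needed.
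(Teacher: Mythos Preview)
Your proof is correct and follows the same route as the paper: differentiate the defining identity~\eqref{eq_trsfJac} via Lemma~\ref{lem_derivAC} and the formulas~\eqref{eq_derivees} for $\partial\X,\partial\Y$, then read off the $\cQ_{j_1,j_2}$-coefficients. You supply more combinatorial detail (the cancellation $-(j_1-1+j_2)+k+j_2=k-j_1+1$) and make explicit the verification that the coefficients remain in $\Sing$, but the argument is otherwise identical to the paper's.
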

\begin{proof}
Thanks to~\eqref{eq_derivACz} and Definition~\ref{dfn_JS}, we find
\[
\left.\left(\frac{\partial f}{\partial z}\right)\right\rvert_{k+1} A=\sum_{j_1=0}^{s_1}\sum_{j_2=0}^{s_2}\left(\frac{\partial f_{j_1,j_2}}{\partial z}\X(A)^{j_1}\Y(A)^{j_2}+j_2f_{j_1,j_2}\X(A)^{j_1+1}\Y(A)^{j_2-1}\right).
\]
From this, we deduce the results related to \(\partial/\partial z\).

Moreover, thanks to~\eqref{eq_derivACtau} and Definition~\ref{dfn_JS}, we find
\begin{multline*}
-k\left(f\vert_kA\right)\X(A)+\left.\left(\frac{\partial f}{\partial\tau}\right)\right\rvert_{k+2} A-\Y(A)\left.\left(\frac{\partial f}{\partial z}\right)\right\rvert_{k+1} A=\\
\sum_{j_1=0}^{s_1}\sum_{j_2=0}^{s_2}\left(
\frac{\partial f_{j_1,j_2}}{\partial\tau}\X(A)^{j_1}\Y(A)^{j_2}-j_1f_{j_1,j_2}\X(A)^{j_1+1}\Y(A)^{j_2}-j_2f_{j_1,j_2}\X(A)^{j_1+1}\Y(A)^{j_2}
\right).
\end{multline*}
Using the results related to \(\partial/\partial z\), we then find those related to \(\partial/\partial\tau\).

If \(f\in\QJS{k}{s_1}{s_2}\), then \(\frac{\partial f}{\partial z}\in\QJS{k+1}{s_1+1}{s_2}\), but \(\cQ_{s_1+1,s_2}(\partial f/\partial z)=0\), so \(\frac{\partial f}{\partial z}\in\QJS{k+1}{s_1+1}{s_2-1}+\QJS{k+1}{s_1}{s_2}\). The inclusion for \(\partial/\partial\tau\) is proved in the same way.
\end{proof}
\begin{rem}\label{rem_profdz}
Thus, if \(s_2=0\), then \(\frac{\partial f}{\partial z}\in\QJS{k+1}{s_1}{0}\). In particular, if \(f \in \AlgJS_k\), then \(\dfrac{\partial f}{\partial z} \in \AlgJS_{k+1}\).
\end{rem}
\subsection{Fundamental examples}\label{subsec_qjacfonda} 
The results of this section are summarized in Table~\vref{tab_explefonda}.
\subsubsection{Quasimodular forms}\label{subsub_fondafqmod}
As mentioned in Paragraph~\ref{subsec_jacosingexple}, we identify from now on any function \(f : \pk \to \CC\) with the function \(f\colon\pkC\to\CC\) defined by \(f(\tau,z)=f(\tau)\). Through this identification, any modular form of weight \(k\) is a singular quasi-Jacobi form of weight \(k\) and depth \((0,0)\). The \(n\)-th derivative (with respect to \(\tau\)) of a modular form of weight \(k\) is then a singular quasi-Jacobi form of weight \(k+2n\) and depth \((n,0)\). Similarly, \(\eisen_2\) is a singular quasi-Jacobi form of weight 2 and depth \((1,0)\) with \(\cQ_{1,0}\left(\eisen_2\right)=-2\ic\pi\). Since the algebra of quasimodular forms is generated by the modular forms \(\eisen_4\) and \(\eisen_6\) and by the quasimodular form \(\eisen_2\), we have thus shown that all quasimodular forms are singular quasi-Jacobi forms.

\subsubsection{The first shifted Eisenstein function}
The shifted Eisenstein series of weight 1 is the series defined on \(\pkC\) by
\[
\Eisen_1(\tau,z)=\lim_{M\to+\infty}\sum_{m=-M}^M\left(\lim_{N\to+\infty}\sum_{\substack{n=-N\\ m=0\Rightarrow n\neq 0}}^N\frac{1}{z+m+n\tau}\right)
\]
\cite[\texten{Chapter}~III, \S 2]{zbMATH01236956}. This function is well-defined and admits a Laurent series expansion
\begin{equation}\label{eq_LaurentEun}
\Eisen_1(\tau,z)=\frac{1}{z}-\sum_{n=0}^{+\infty}\eisen_{2n+2}(\tau)z^{2n+1}
\end{equation}
with the series converging on any punctured open disk centered at \(z=0\) with a radius less than~\(\abs{\tau}\) (see \cite[\texten{Chapter}~III, eq. (9)]{zbMATH01236956}). It satisfies the equation:
\[
\forall A\in\Jac\qquad\Eisen_1\vert_1A=\Eisen_1+2\ic\pi\Y(A)
\]
\cite[\texten{Lemma}~1]{hal03132764}\footnote{In this work, \(\mathrm{J}_1\) was used to denote what we refer to here as \(\frac{1}{2i\pi}\Eisen_1\).}; the function \(z\mapsto\Eisen_1(\tau,z)\) is meromorphic, with its poles located at the lattice points \(\Z+\tau\Z\), and they are simple. Thus, the function \(\Eisen_1\) is a singular quasi-Jacobi form of weight 1 and depth \((0,1)\).

\begin{rem}
We use Weil's convention from~\cite{zbMATH07634708}, reserving uppercase letters for functions intrinsically depending on two variables and lowercase letters for functions intrinsically depending on one variable. The function \(\Eisen_1\) and the generalizations obtained by replacing \(z+m+n\tau\) by \((z+m+n\tau)^k\) are particular cases of what Charollois \& Sczech~\cite{zbMATH06696479} call Kronecker–Eisenstein series. Our function \(\Eisen_1\) is their function \(K^*(z,0,1,\tau)=Ser(0,0,z,\tau)\).
\end{rem}

\begin{table}
\begin{center}
\begin{tabular}{|>{$}c<{$}|>{$}c<{$}|>{$}c<{$}|>{$}c<{$}|}
\hline
\text{Function} & \text{Weight} & \begin{tabular}{c}Depth \\ \((s_1,s_2)\)\end{tabular} & \cQ_{s_1,s_2}\\
\hline
\wp & 2 & (0,0) & \wp\\
\hline
\dz\wp & 3 & (0,0) & \dz\wp\\
\hline
\eisen_4 & 4 & (0,0) & \eisen_4\\
\hline
\Eisen_1 & 1 & (0,1) & 2\ic\pi\\
\hline
\eisen_2 & 2 & (1,0) & -2\ic\pi\\
\hline
\end{tabular}
\caption{Fundamental examples of singular quasi-Jacobi forms.}
\label{tab_explefonda}
\end{center}
\end{table}

\begin{lem}\label{lem_indepalgcinq}
The functions \(\wp\), \(\dz\wp\), \(\eisen_4\), \(\Eisen_1\), and \(\eisen_2\) are algebraically independent.
\end{lem}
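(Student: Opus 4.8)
The plan is to reduce any supposed algebraic relation between these five functions to an algebraic relation between $\wp$, $\dz\wp$, $\eisen_4$ alone — which Theorem~\ref{prop_strucJS}(1) forbids. The tool is the double-depth filtration, combined with the transformation data of Table~\ref{tab_explefonda}: $\wp,\dz\wp,\eisen_4$ have depth $(0,0)$, the "modular'' generator $\eisen_2$ has depth $(1,0)$ with $\cQ_{1,0}(\eisen_2)=-2\ic\pi$, and the "elliptic'' generator $\Eisen_1$ has depth $(0,1)$ with $\cQ_{0,1}(\Eisen_1)=2\ic\pi$.

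First I would normalise the relation. Suppose $F\in\CC[X_1,X_2,X_3,X_4,X_5]$ is nonzero and $F(\wp,\dz\wp,\eisen_4,\Eisen_1,\eisen_2)=0$. The monomial $X_1^aX_2^bX_3^cX_4^jX_5^i$ evaluates to an element of $\QJSpoids{2a+3b+4c+j+2i}$, so, grouping the monomials of $F$ according to this integer and using that the spaces $\QJSpoids{k}$ are in direct sum (noted just after Definition~\ref{dfn_JS}), the relation breaks into finitely many relations, each involving only monomials of one fixed weight $k$. Hence I may assume $F$ is \emph{isobaric} of weight $k$ and derive a contradiction from that.

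Next I would play the two depths off against one another. Write $F=\sum_{i,j,a,b,c}c_{i,j,a,b,c}\,\eisen_2^i\Eisen_1^j\wp^a(\dz\wp)^b\eisen_4^c$. From the formula for $\cQ_{i,j}$ of a product (recorded just after Definition~\ref{dfn_JS}) together with the depth data above, the monomial $\eisen_2^i\Eisen_1^j\wp^a(\dz\wp)^b\eisen_4^c$ has depth exactly $(i,j)$, with
\[
\cQ_{i,j}\bigl(\eisen_2^i\Eisen_1^j\wp^a(\dz\wp)^b\eisen_4^c\bigr)=(-2\ic\pi)^i(2\ic\pi)^j\,\wp^a(\dz\wp)^b\eisen_4^c ,
\]
while $\cQ_{i',j'}$ annihilates it whenever $i'>i$ or $j'>j$. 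Let $s_2$ be the largest value of $j$ occurring in $F$, and, among the monomials with that value of $j$, let $s_1$ be the largest value of $i$. Applying $\cQ_{s_1,s_2}$ to $F(\wp,\dz\wp,\eisen_4,\Eisen_1,\eisen_2)=0$ and using additivity: a monomial survives only if its elliptic depth is $\geq s_2$, hence equal to $s_2$ by maximality, and then only if its modular depth is $\geq s_1$, hence equal to $s_1$; so the identity collapses to
\[
0=(-2\ic\pi)^{s_1}(2\ic\pi)^{s_2}\sum_{a,b,c}c_{s_1,s_2,a,b,c}\,\wp^a(\dz\wp)^b\eisen_4^c .
\]
Since $2\ic\pi\neq0$ and $\wp,\dz\wp,\eisen_4$ are algebraically independent, every $c_{s_1,s_2,a,b,c}$ vanishes, contradicting the choice of $s_1$. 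Therefore $F=0$.

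I don't expect any genuinely hard step; the only thing requiring care is the two-parameter bookkeeping — verifying through the product formula for $\cQ_{i,j}$ that $\cQ_{s_1,s_2}$ really kills every monomial except those of depth exactly $(s_1,s_2)$ and reads off the correct leading scalar $(-2\ic\pi)^{s_1}(2\ic\pi)^{s_2}$ — together with the preliminary passage to an isobaric $F$ so that $\cQ_{s_1,s_2}$ operates inside a single $\QJSpoids{k}$. (Equivalently one may split this into two stages: first eliminate $\Eisen_1$ with the operators $\cQ_{0,j}$ to get the algebraic independence of $\wp,\dz\wp,\eisen_4,\Eisen_1$, then eliminate $\eisen_2$ with the operators $\cQ_{i,0}$.)
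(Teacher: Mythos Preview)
Your proof is correct and follows essentially the same approach as the paper's: reduce to a fixed weight using the direct-sum decomposition of \(\AlgQJS\) by weight, then use the double-depth filtration to peel off the leading term in \(\eisen_2\) and \(\Eisen_1\), landing on a nontrivial relation among \(\wp,\dz\wp,\eisen_4\) that contradicts Theorem~\ref{prop_strucJS}(1). The paper phrases this last step more abstractly (``the left-hand side has depth \((s_1,s_2)\), the right-hand side has depth zero, contradiction''), whereas you compute \(\cQ_{s_1,s_2}\) explicitly via the product formula; your lexicographic choice of \((s_1,s_2)\) is in fact a bit more careful than the paper's, which tacitly assumes the corner coefficient is nonzero.
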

\begin{proof}
Thanks to Theorem~\ref{prop_strucJS}, it is enough to show that if \(k\), \(s_1\), and \(s_2\) are integers and if the \(f_{j_1,j_2}\) are singular Jacobi forms of weight \(k-j_1-2j_2\) such that
\begin{equation}\label{eq_uniciprof}
\sum_{j_1=0}^{s_1}\sum_{j_2=0}^{s_2}f_{j_1,j_2}\Eisen_1^{j_1}\eisen_2^{j_2}=0
\end{equation}
then, all the \(f_{j_1,j_2}\) are zero. Suppose by contradiction that one is non-zero, we can assume it is~\(f_{s_1,s_2}\). Then, the left-hand side of~\eqref{eq_uniciprof} has depth \((s_1,s_2)\). By uniqueness of depth, we deduce that \(s_1=s_2=0\) since the right-hand side has zero depth, then all the \(f_{j_1,j_2}\) are zero.
\end{proof}

\subsection{Structure}
Section~\ref{subsec_qjacfonda} shows \(\CC[\wp,\dz\wp,\eisen_4,\Eisen_1,\eisen_2]\subseteq\AlgQJS\). The objective of this section is to show the equality of the two algebras.

The proof is based on the following lemma.
\begin{lem}\label{lem_coefestforme}
Let \(f\) be a singular quasi-Jacobi form of weight \(k\) and depth \((s_1,s_2)\). Then \(\cQ_{s_1,s_2}(f)\) is a singular Jacobi form of weight \(k-2s_1-s_2\).
\end{lem}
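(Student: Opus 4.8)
The plan is to exploit the cocycle behaviour of $\X$ and $\Y$ established in Proposition~\ref{prop_JXYcocycles}, and to extract the top depth coefficient from the transformation equation~\eqref{eq_trsfJac} by a careful substitution $A \mapsto AB$. Writing $f\vert_kA = \sum_{j_1,j_2} f_{j_1,j_2}\,\X(A)^{j_1}\Y(A)^{j_2}$ with $f_{j_1,j_2} = \cQ_{j_1,j_2}(f)$, I would first apply the general identity $f\vert_k(AB) = (f\vert_kA)\vert_kB$ and use that $\vert_kB$ is an algebra morphism (up to the automorphic factor) together with the cocycle relations $\X(AB) = \X(A)\vert_{2,0}B + \X(B)$ and $\Y(AB) = \Y(A)\vert_{1,0}B + \Y(B)$ from Proposition~\ref{prop_JXYcocycles}. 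Substituting these into the expansion of $f\vert_k(AB)$ and also expanding $f\vert_k(AB)$ directly in terms of $\X(AB),\Y(AB)$, one gets two polynomial identities in $\X(A),\Y(A)$ (with coefficients functions of $B$); comparing the coefficient of the top monomial $\X(A)^{s_1}\Y(A)^{s_2}$ on both sides, and using the algebraic independence of $\X$ and $\Y$ over $\CC$, yields a transformation equation for $f_{s_1,s_2}$ alone.

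The key computation is the weight bookkeeping. On the left, the coefficient of $\X(A)^{s_1}\Y(A)^{s_2}$ in $f\vert_k(AB)$ expanded via the cocycle relations is $(f_{s_1,s_2}\vert_kB)\,\bigl(\X(A)\vert_{2,0}B\bigr)^{s_1}\bigl(\Y(A)\vert_{1,0}B\bigr)^{s_2}$ evaluated so that only the pure $\X(A),\Y(A)$ part survives — but the automorphic factors attached to $\vert_{2,0}B$ and $\vert_{1,0}B$ contribute exactly $\J(B)^{-2s_1}$ and $\J(B)^{-s_2}$ respectively (since $\X(A)\vert_{2,0}B$ means precompose $\X(A)$ with $\AC(B)$ and multiply by $\J(B)^{-2}$, and similarly with exponent $1$ for $\Y$). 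On the right, the coefficient of $\X(A)^{s_1}\Y(A)^{s_2}$ in the direct expansion $\sum f_{j_1,j_2}(\tau',z')\X(AB)^{j_1}\Y(AB)^{j_2}$, after substituting the cocycle relations for $\X(AB),\Y(AB)$ and again isolating the pure top term, is $f_{s_1,s_2}\bigl(\AC(B)(\tau,z)\bigr)$. Equating and recalling that $f\vert_kB$ already carries the factor $\J(B)^{-k}$, one obtains
\[
f_{s_1,s_2}\bigl(\AC(B)(\tau,z)\bigr) = \J(B)^{\,k-2s_1-s_2}\, f_{s_1,s_2}(\tau,z),
\]
which is precisely the statement that $f_{s_1,s_2}$ transforms like a Jacobi form of index zero and weight $k-2s_1-s_2$, i.e.\ $f_{s_1,s_2}\vert_{k-2s_1-s_2}B = f_{s_1,s_2}$ for all $B\in\Jac$; equivalently it means $\cQ_{s_1,s_2}(f)$ has depth $(0,0)$.

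To finish, I would note that $f_{s_1,s_2}$ is singular by hypothesis (it is one of the $f_{j_1,j_2}\in\Sing$ in Definition~\ref{dfn_JS}), so the transformation equation above together with singularity says exactly that $f_{s_1,s_2}\in\JS{k-2s_1-s_2}$ by Theorem~\ref{prop_strucJS}(3). A cleaner alternative, which I would actually prefer to write up, is to avoid the double substitution: since $\QJS{k}{s_1}{s_2}$ is closed under the maps $\cQ_{j_1,j_2}$ in a controlled way, one can argue directly that the operator "take the top coefficient'' $f\mapsto \cQ_{s_1,s_2}(f)$ followed by the transformation law forces zero depth. The main obstacle is purely organizational: keeping track of which automorphic exponent ($0$, $1$, or $2$) attaches to $\J(B)$ when one writes $\X(A)\vert_{2,0}B$ versus $\Y(A)\vert_{1,0}B$, and making sure that when the cocycle relations are substituted and powers expanded, the only contribution to the monomial $\X(A)^{s_1}\Y(A)^{s_2}$ of maximal total degree comes from the ``leading'' terms $\X(A)\vert_{2,0}B$ and $\Y(A)\vert_{1,0}B$ (the additive pieces $\X(B)$, $\Y(B)$ only lower the $\X(A),\Y(A)$-degree). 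Once that degree argument is made precise, the weight identity and hence the lemma follow immediately.
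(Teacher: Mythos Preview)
Your approach is correct and is essentially the paper's own proof: write $f\vert_k(AB)$ both directly as $\sum_{x,y}\cQ_{x,y}(f)\X(AB)^x\Y(AB)^y$ and as $(f\vert_kA)\vert_kB$, use the cocycle relations of Proposition~\ref{prop_JXYcocycles}, and compare top coefficients to obtain $\cQ_{s_1,s_2}(f)\vert_{k-2s_1-s_2}B=\cQ_{s_1,s_2}(f)$. One cosmetic point: the comparison is cleanest as a polynomial identity in $\X(AB),\Y(AB)$ (this is what the paper does) rather than in $\X(A),\Y(A)$, and in your ``direct expansion'' the coefficients $f_{j_1,j_2}$ are evaluated at $(\tau,z)$, not at $\AC(B)(\tau,z)$---fixing that notational slip makes your bookkeeping line up exactly with the displayed transformation law you end with.
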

\begin{proof}
If \(A\) and \(B\) are two elements of \(\Jac\), we have on the one hand
\begin{equation}\label{eq_dbleacun}
f\vert_k(AB)=\sum_{x=0}^{s_1}\sum_{y=0}^{s_2}\cQ_{x,y}(f)\X(AB)^{x}\Y(AB)^{y}
\end{equation}
and on the other hand
\[
f\vert_k(AB)=\left(f\vert_kA\right)\vert_kB=\sum_{j_1=0}^{s_1}\sum_{j_2=0}^{s_2}\left(\cQ_{j_1,j_2}(f)\vert_{k-2j_1-j_2}B\right)\left(\X(A)\vert_2B\right)^{j_1}\left(\Y(A)\vert_1B\right)^{j_2}.
\]
To transform this latter equality, we use Proposition~\ref{prop_JXYcocycles} to obtain
\begin{multline}\label{eq_dbleacdeux}
f\vert_k(AB)=\sum_{x=0}^{s_1}\sum_{y=0}^{s_2}\left(\sum_{j_1=x}^{s_1}\sum_{j_2=y}^{s_2}\binom{j_1}{x}\binom{j_2}{y}\left(-\X(B)\right)^{j_1-x}\left(-\Y(B)\right)^{j_2-y}\right)\left(\cQ_{j_1,j_2}(f)\vert_{k-2j_1-j_2}B\right)\\\X(AB)^x\Y(AB)^y. 
\end{multline}
Comparing the coefficients of \(\X(AB)^{s_1}\Y(AB)^{s_2}\) in~\eqref{eq_dbleacun} and~\eqref{eq_dbleacdeux}, we find
\[
\cQ_{s_1,s_2}(f)\vert_{k-2s_1-s_2}B=\cQ_{s_1,s_2}(f). 
\]
Since \(\cQ_{s_1,s_2}(f)\) is singular, we deduce that \(\cQ_{s_1,s_2}(f)\) is a singular Jacobi form of weight \(k-2s_1-s_2\).
\end{proof}

\begin{thm}\label{thm_strucQJ}
The algebra of singular quasi-Jacobi forms is generated by the functions \(\wp\),\(\dz\wp\),\(\eisen_4\),\(\Eisen_1\) and \(\eisen_2\). Thus, we have
\[
\AlgQJS=\CC[\wp,\dz\wp,\eisen_4,\Eisen_1,\eisen_2].
\]
\end{thm}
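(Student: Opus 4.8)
The plan is to prove the reverse inclusion $\AlgQJS\subseteq\CC[\wp,\dz\wp,\eisen_4,\Eisen_1,\eisen_2]$ by induction on the double depth $(s_1,s_2)$, ordered (say) lexicographically or by the sum $s_1+s_2$. The base case $(0,0)$ is exactly Theorem~\ref{prop_strucJS}, which identifies $\QJS{k}{0}{0}=\AlgJS_k$ with $\CC[\wp,\dz\wp,\eisen_4]_k$. For the inductive step, take $f\in\QJSpoids{k}$ of depth $(s_1,s_2)$ with $(s_1,s_2)\neq(0,0)$. By Lemma~\ref{lem_coefestforme}, the leading coefficient $g:=\cQ_{s_1,s_2}(f)$ is a singular Jacobi form of weight $k-2s_1-s_2$, hence $g\in\CC[\wp,\dz\wp,\eisen_4]$ by Theorem~\ref{prop_strucJS}.

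The heart of the argument is to subtract off a correcting term built from $\Eisen_1$ and $\eisen_2$. Using the cocycle relations $\Eisen_1\vert_1 A=\Eisen_1+2\ic\pi\Y(A)$ and $\eisen_2\vert_2 A=\eisen_2-2\ic\pi\X(A)$ (the latter being the translation of $\cQ_{1,0}(\eisen_2)=-2\ic\pi$ together with Lemma~\ref{lem_coefestforme} applied to $\eisen_2$), one computes the transformation of the product $g\,\eisen_2^{s_1}\Eisen_1^{s_2}$. Expanding $(\eisen_2-2\ic\pi\X(A))^{s_1}(\Eisen_1+2\ic\pi\Y(A))^{s_2}$ by the binomial theorem, the top term in $\X(A)^{s_1}\Y(A)^{s_2}$ is $g\cdot(-2\ic\pi)^{s_1}(2\ic\pi)^{s_2}$, which is a nonzero constant multiple of $g=\cQ_{s_1,s_2}(f)$. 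Therefore, for a suitable nonzero constant $\kappa=(-2\ic\pi)^{s_1}(2\ic\pi)^{s_2}$, the function
\[
h:=f-\kappa^{-1}\,g\,\eisen_2^{s_1}\Eisen_1^{s_2}
\]
is again a singular quasi-Jacobi form of weight $k$, but its $\X(A)^{s_1}\Y(A)^{s_2}$-coefficient vanishes; hence $h$ has depth strictly smaller than $(s_1,s_2)$ in the chosen order (one must check that none of the binomial cross-terms increases either depth index beyond $(s_1,s_2)$ — they all have $\X$-degree $\le s_1$ and $\Y$-degree $\le s_2$, and the pair $(s_1,s_2)$ itself is killed). By the induction hypothesis $h\in\CC[\wp,\dz\wp,\eisen_4,\Eisen_1,\eisen_2]$, and since $g\in\CC[\wp,\dz\wp,\eisen_4]$ we conclude $f\in\CC[\wp,\dz\wp,\eisen_4,\Eisen_1,\eisen_2]$ as well.

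The main obstacle is the bookkeeping in the induction step: one needs a clean monomial order on pairs $(s_1,s_2)\in\N^2$ for which subtracting the correcting term genuinely decreases the order, and one must verify that all the lower-order binomial contributions from $(\eisen_2\vert_2A)^{s_1}(\Eisen_1\vert_1A)^{s_2}$ land in depths that are $\le(s_1,s_2)$ componentwise with at least one strict inequality once the top coefficient is cancelled — in other words that the ``mixed'' terms $\X(A)^{a}\Y(A)^{b}$ with $(a,b)\neq(s_1,s_2)$, $a\le s_1$, $b\le s_2$, do not obstruct the depth drop. This is routine once the ordering is fixed (lexicographic on $(s_1+s_2,\ s_1)$ works cleanly, since every cross-term has $a+b<s_1+s_2$), but it is the only place where care is required; everything else reduces to Theorem~\ref{prop_strucJS}, Lemma~\ref{lem_coefestforme}, and the explicit cocycle identities for $\Eisen_1$ and $\eisen_2$ recorded in Section~\ref{subsec_qjacfonda}.
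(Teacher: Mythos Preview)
Your argument is exactly the paper's: subtract the correcting term \((-1)^{s_1}(2\ic\pi)^{-s_1-s_2}\,\cQ_{s_1,s_2}(f)\,\eisen_2^{s_1}\Eisen_1^{s_2}\) (your \(\kappa^{-1}\) matches this constant), invoke Lemma~\ref{lem_coefestforme} and Theorem~\ref{prop_strucJS} to place the leading coefficient in \(\CC[\wp,\dz\wp,\eisen_4]\), and descend by induction on \(s_1+s_2\) to the base case \(\QJS{k}{0}{0}=\JS{k}\). The paper records the depth drop simply as \(h\in\QJS{k}{s_1-1}{s_2}+\QJS{k}{s_1}{s_2-1}\) and inducts, which is precisely the bookkeeping you unpack in your final paragraph.
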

\begin{proof}
We have shown (see Theorem~\ref{prop_strucJS}) that \(\AlgJS=\CC[\wp,\dz\wp,\eisen_4]\). Let \(f\in\QJS{k}{s_1}{s_2}\), and set
\[
g=f-(-1)^{s_1}\left(\frac{1}{2\ic\pi}\right)^{s_1+s_2}\cQ_{s_1,s_2}(f)\eisen_2^{s_1}\Eisen_1^{s_2}.
\]
Then
\begin{enumerate}
\item  \(g\in\QJS{k}{s_1-1}{s_2}+\QJS{k}{s_1}{s_2-1}\);
\item \(\cQ_{s_1,s_2}(f)\in\JS{k-2s_1-s_2}\subset\CC[\wp,\dz\wp,\eisen_4]\) according to Lemma~\ref{lem_coefestforme}, so \(g-f\in\CC[\wp,\dz\wp,\eisen_4,\Eisen_1,\eisen_2]\).
\end{enumerate}
Based on Remark~\ref{jacobisinguliereprofnulle}, by induction on \(s_1+s_2\), we obtain
\[
\forall k\in\N\enspace\forall(s_1,s_2)\in\N^2\qquad \QJS{k}{s_1}{s_2}\subseteq\CC[\wp,\dz\wp,\eisen_4,\Eisen_1,\eisen_2].
\]
According to Lemma~\ref{lem_indepalgcinq}, \(\AlgQJS\) is therefore the polynomial algebra \(\CC[\wp,\dz\wp,\eisen_4,\Eisen_1,\eisen_2]\).
\end{proof}
\subsection{Remarkable subalgebras}\label{sec_sousalgrem}
The results of this section are summarized in Figure~\vref{fig_diagram}. 
\subsubsection{Quasi-Jacobi forms of quasielliptic type}\label{subsec_jacell}
\begin{dfn}
We call a \emph{quasi-Jacobi form of quasielliptic type} of weight~\(k\) and depth~\(s\) any singular quasi-Jacobi form of weight \(k\) and depth \((0,s)\).
\end{dfn}
We denote by \(\mathrm{JS}_k^{0,\leq s}\) the vector space of such forms of depth less than or equal to \(s\). We define \(\displaystyle\AlgQJell = \bigoplus_{k=0}^{\infty} \bigcup_{s \geq 0} \mathrm{JS}_k^{0,\leq s}\), which we will call the set of quasi-Jacobi forms of quasielliptic type in the following.

Thanks to Theorem~\ref{thm_strucQJ}, this is a polynomial algebra:
\[
\AlgQJell=\CC[\wp,\dz\wp,\eisen_4,\Eisen_1].
\]
We have \(\AlgM \subset \AlgJS \subset \AlgQJell \subset \AlgQJS\).

Equation~\eqref{eq_dtaurho} shows that \(\AlgQJell\) is not stable under the \emph{modular derivation}
\[
\dtau=\frac{\pi}{2\ic}\frac{\partial}{\partial\tau}.
\]
According to equations~\eqref{eq_delLaurent} and~\eqref{eq_LaurentEun}, we have
\begin{equation}\label{eq_Euwped}
\frac{\partial\Eisen_1}{\partial z}=-\wp-\eisen_2,
\end{equation}
and therefore \(\AlgQJell\) is not stable under the \emph{elliptic derivation}
\[
\dz=\frac{\partial}{\partial z}.
\]

Table~\vref{tab_stabderalginter} summarizes the stability of the various algebras involved under the various derivations with introduced.
 
\subsubsection{Quasi-Jacobi forms of quasimodular type}\label{sec_fqjm}
\begin{dfn}
We call a \emph{quasi-Jacobi form of quasimodular type} of weight \(k\) and depth \(s\) any singular quasi-Jacobi form of weight \(k\) and depth \((s,0)\).
\end{dfn}
We denote by \(\mathrm{JS}_k^{\leq s, 0}\) the vector space of such forms of depth less than or equal to \(s\). We define \(\displaystyle\AlgQJmod = \bigoplus_{k=0}^{\infty} \bigcup_{s \geq 0} \mathrm{JS}_k^{\leq s, 0}\), which we will call the set of quasi-Jacobi forms of quasimodular type in the following.

Thanks to Theorem~\ref{thm_strucQJ}, this is a polynomial algebra:
\[
\AlgQJmod=\CC[\wp,\dz\wp,\eisen_4,\eisen_2].
\]
We have \(\AlgM \subset \AlgJS \subset \AlgQJmod \subset \AlgQJS\) and \(\AlgM \subset \AlgQM \subset \AlgQJmod \subset \AlgQJS\).

By Remark~\ref{rem_profdz}, the algebra \(\AlgQJmod\) is stable under the derivation \(\dz\). Equation~\eqref{eq_dtaurho} shows that it is not stable under the derivation \(\dtau\).

\begin{figure}
\begin{center}
\begin{equation*}
\xymatrix{ & \AlgQJS=\CC[\wp,\dz\wp,\eisen_4,\Eisen_1,\eisen_2] \ar@{-}[rd] \\ 
\AlgQJell=\CC[\wp,\dz\wp,\eisen_4,\Eisen_1] \ar@{-}[ru] \ar@{-}[rd] &&\AlgQJmod=\CC[\wp,\dz\wp,\eisen_4,\eisen_2]   \\ 
&\AlgJS=\CC[\wp,\dz\wp,\eisen_4] \ar@{-}[ru] & \AlgQM=\CC[\eisen_4,\eisen_6,\eisen_2]\ar@{-}[u] \\
& \ar@{-}[u] \AlgM=\CC[\eisen_4,\eisen_6] \ar@{-}[ru]&  }
\end{equation*}
\caption{Remarkable Subalgebras.}
\label{fig_diagram}
\end{center}
\end{figure}
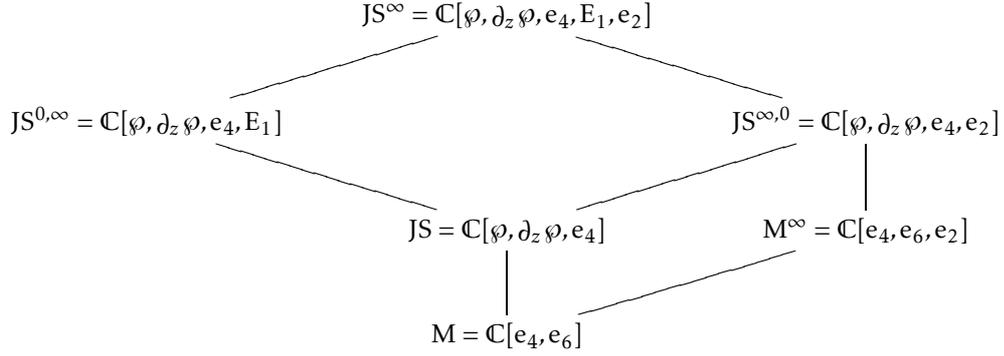

\subsection{Fundamental differential equations} 
\subsubsection{Oberdieck derivation}
\begin{dfn}
We define a derivation on the algebra \(\AlgQJS\) of singular quasi-Jacobi forms by extending the following definition by linearity:  
\[
\text{for any form \(f\in\QJSpoids{k}\)},\qquad\Ob(f)=4\dtau(f)+\Eisen_1\dz(f)-k\eisen_2f. 
\]
We call this derivation the \emph{Oberdieck derivation}. 
\end{dfn}
\begin{rem}
We have \(\Ob=4\pi^2\ObCDMR\) where \(\ObCDMR\) is defined in~\cite{hal03132764}. The name refers to the work of Georg Oberdieck~\cite{oberdieck2014serrederivativeweightjacobi}. The restriction of \(\Ob\) to \(\AlgM\) is the Serre derivation. 
\end{rem}
The derivation \(\Ob\) by definition maps \(\QJS{k}{s_1}{s_2}\) into \(\QJS{k+2}{s_1+1}{s_2+1}\). We have the following more precise proposition: 
\begin{prop}\label{prop_ObstabJS}

\begin{enumerate}
 \item The derivation \(\Ob\) maps \(\QJS{k}{s_1}{s_2}\) into \(\QJS{k+2}{s_1+1}{s_2}\).
 \item The algebra \(\AlgJS\) is stable under \(\Ob\): the image under \(\Ob\) of any singular Jacobi form of weight \(k\) is a singular Jacobi form of weight \(k+2\). 
\end{enumerate}
\end{prop}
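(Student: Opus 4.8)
The key identity to establish is the action of $\Ob$ on the ``depth data'' $\cQ_{j_1,j_2}$, from which both parts follow by bookkeeping. The strategy is to combine the three formulas from Proposition~\ref{prop_stabder} for $\cQ_{j_1,j_2}(\dtau f)$ and $\cQ_{j_1,j_2}(\dz f)$ with the product rule for $\cQ_{i,j}$ applied to $\Eisen_1\dz(f)$ and $\eisen_2 f$, using the known depth data of $\Eisen_1$ and $\eisen_2$ from Table~\vref{tab_explefonda} (namely $\Eisen_1$ has depth $(0,1)$ with $\cQ_{0,1}(\Eisen_1)=2\ic\pi$ and $\cQ_{0,0}(\Eisen_1)=\Eisen_1$, while $\eisen_2$ has depth $(1,0)$ with $\cQ_{1,0}(\eisen_2)=-2\ic\pi$ and $\cQ_{0,0}(\eisen_2)=\eisen_2$).

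\textbf{Step 1: compute $\cQ_{j_1,j_2}(\Ob f)$ for $f\in\QJSpoids{k}$.} Write $\Ob(f)=4\dtau(f)+\Eisen_1\dz(f)-k\eisen_2 f$ and apply $\cQ_{j_1,j_2}$ to each term. For the first term, Proposition~\ref{prop_stabder} gives (up to the scalar $\tfrac{\pi}{2\ic}$)
\[
\cQ_{j_1,j_2}(4\dtau f)=4\dtau\cQ_{j_1,j_2}(f)+\tfrac{\pi}{2\ic}\Bigl(\partial_z\cQ_{j_1,j_2-1}(f)+(k-j_1+1)\cQ_{j_1-1,j_2}(f)\Bigr).
\]
For the second term, expand $\cQ_{j_1,j_2}(\Eisen_1\dz f)=\sum\cQ_{\alpha,\gamma}(\Eisen_1)\cQ_{\beta,\delta}(\dz f)$ over $\alpha+\beta=j_1$, $\gamma+\delta=j_2$; only $(\alpha,\gamma)\in\{(0,0),(0,1)\}$ contribute, giving $\Eisen_1\cQ_{j_1,j_2}(\dz f)+2\ic\pi\,\cQ_{j_1,j_2-1}(\dz f)$, and then substitute $\cQ_{j_1,j_2}(\dz f)=\partial_z\cQ_{j_1,j_2}(f)+(j_2+1)\cQ_{j_1-1,j_2+1}(f)$. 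For the third term, $\cQ_{j_1,j_2}(\eisen_2 f)=\eisen_2\cQ_{j_1,j_2}(f)-2\ic\pi\,\cQ_{j_1-1,j_2}(f)$. Adding everything, the terms involving $\Eisen_1$ and $\dtau$ and $\partial_z$ assemble into $\Ob(\cQ_{j_1,j_2}(f))$ plus lower-depth corrections; the crucial point is that the coefficient of $\cQ_{j_1,j_2-1}(\dz f)$ contributed by $\Eisen_1\dz f$ is $2\ic\pi$, which must exactly cancel a term. I expect the net result to be
\[
\cQ_{j_1,j_2}(\Ob f)=\Ob\bigl(\cQ_{j_1,j_2}(f)\bigr)+c\cdot\cQ_{j_1-1,j_2}(f)+c'\cdot(j_2+1)\cQ_{j_1-1,j_2+1}(f)\cdot 2\ic\pi
\]
for suitable constants, with \emph{no} term proportional to $\cQ_{j_1,j_2+1}(f)$ — this is the content of part (1).

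\textbf{Step 2: deduce (1) and (2).} For part (1), it suffices to check that $\cQ_{s_1+1,s_2}(\Ob f)$ can be nonzero but $\cQ_{\bullet,s_2+1}(\Ob f)=0$: a priori $\Ob$ raises $(s_1,s_2)$ to $(s_1+1,s_2+1)$, but the formula from Step~1 shows $\cQ_{j_1,s_2+1}(\Ob f)$ involves only $\cQ$'s of $f$ with second index $\geq s_2+1$, all of which vanish since $f$ has elliptic depth $\leq s_2$; hence $\Ob f\in\QJS{k+2}{s_1+1}{s_2}$. For part (2), take $f\in\AlgJS_k=\QJS{k}{0}{0}$; by (1) we get $\Ob f\in\QJS{k+2}{1}{0}$, so a priori $\Ob f$ could have modular depth $1$. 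But by Proposition~\ref{prop_strucJS}, $\AlgJS_k\subseteq\QJSpoids{k}$ with \emph{zero} depth, and the explicit generators $\wp,\dz\wp,\eisen_4$ let one check $\Ob$ directly: since $\Ob$ is a derivation, it is enough to verify $\Ob(\wp),\Ob(\dz\wp),\Ob(\eisen_4)\in\AlgJS$. Alternatively and more cleanly, apply Step~1 with $j_1=1$, $j_2=0$: $\cQ_{1,0}(\Ob f)=\Ob(\cQ_{1,0}(f))+(\text{const})\cQ_{0,0}(f)\cdot(\ldots)$, and since $\cQ_{1,0}(f)=0$ and the correction term is proportional to $\cQ_{0,-1}(\dz f)=0$, we get $\cQ_{1,0}(\Ob f)=0$, so $\Ob f$ has depth $(0,0)$, i.e.\ lies in $\AlgJS_{k+2}$.

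\textbf{Main obstacle.} The delicate part is Step~1: getting the cancellations right so that the coefficient of $\cQ_{j_1,j_2+1}(f)$ vanishes. This hinges on the interplay between the $(j_2+1)\cQ_{j_1-1,j_2+1}(f)$ term produced by $\dz$ and the factor $\Eisen_1$ (with its own depth-$1$ piece $2\ic\pi$ in the elliptic direction) multiplying $\dz f$ — precisely the reason $\Eisen_1$ rather than some other weight-$1$ object appears in the definition of $\Ob$. One must track signs and binomial-type factors carefully; I would organize the computation by first recording the three auxiliary formulas (for $\cQ(\dtau f)$, $\cQ(\dz f)$, and the two product expansions) as displayed equations, then substitute. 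No genuinely hard idea is needed beyond this careful bookkeeping, together with the structural input that $\AlgJS$ consists of depth-$(0,0)$ forms (Proposition~\ref{prop_strucJS}).
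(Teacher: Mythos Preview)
Your approach is essentially identical to the paper's: compute $\cQ_{j_1,j_2}(\Ob f)$ by combining Proposition~\ref{prop_stabder} with the product rule and the depth data of $\Eisen_1$ and $\eisen_2$, then read off both parts from the resulting formula. One correction to your sketch of part~(2): the coefficient you write as a fixed constant $c$ in front of $\cQ_{j_1-1,j_2}(f)$ is actually $2\ic\pi(j_1+j_2-1)$, and the reason $\cQ_{1,0}(\Ob f)=0$ for $f\in\JS{k}$ is precisely that this coefficient \emph{vanishes} at $(j_1,j_2)=(1,0)$ (together with $\cQ_{0,1}(f)=0$), not any ``$\cQ_{0,-1}(\dz f)$'' term---if $c$ were a nonzero constant your argument would fail, since $\cQ_{0,0}(f)=f\neq 0$.
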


\begin{proof}
Let \(f\in\QJSpoids{k}\). Using Proposition~\ref{prop_stabder}, we see that
\begin{multline}\label{eq_Qob}
\cQ_{j_1,j_2}(\Ob(f))=4\dtau(\cQ_{j_1,j_2}(f))+\Eisen_1\dz(\cQ_{j_1,j_2}(f))-k\eisen_2\cQ_{j_1,j_2}(f)+2\ic\pi(j_1+j_2-1)\cQ_{j_1-1,j_2}(f)\\+(j_2+1)\Eisen_1\cQ_{j_1-1,j_2+1}(f). 
\end{multline}

If \(f \in \QJS{k}{s_1}{s_2}\), then \(\cQ_{j_1, s_2+1}(f)=0\) for all \(j_1\), hence \(\Ob(f) \in \QJS{k+2}{s_1+1}{s_2}\).

If \(f\in\JS{k}\), we have \(\cQ_{1,0}\left(\Ob(f)\right)=0\), which shows that \(\Ob(f) \in \JS{k+2}\). 
\end{proof}
\begin{rem}
According to Proposition~\ref{prop_ObstabJS}, the Oberdieck derivation stabilizes \(\AlgQJmod\). However, as we will see in equation~\eqref{eq_ObEun}, it does not stabilize \(\AlgQJell\). 
\end{rem}
\begin{table}\label{table_stabder}
\begin{center}
{\renewcommand{\arraystretch}{1.5}%
\begin{tabular}{|>{$}c<{$}|c|c|c|}
\hline
 & \(\dz\) & \(\dtau\) & \(\Ob\)\\
\hline
\AlgM & yes & no & yes\\
\hline
\AlgJS & yes & no & yes\\
\hline
\AlgQM & yes & yes & yes\\
\hline
\AlgQJell & no & no & no\\
\hline
\AlgQJmod & yes & no & yes\\
\hline
\AlgQJS & yes & yes & yes \\
\hline
\end{tabular}
}
\caption{Stability of algebras under three derivations.}
\label{tab_stabderalginter}
\end{center}
\end{table}

\subsubsection{Applications}\label{subsubsec_application}
The general results from the previous sections allow us, by explicitly calculating the images under derivation of the generators \(\wp,\dz\wp,\eisen_4,\Eisen_1,\eisen_2\), to determine differential relations among these generators. 

The function \(\wp\) is a singular Jacobi form of weight \(2\), and \(\Ob(\wp)\) is therefore a singular Jacobi form of weight \(4\); thanks to Proposition~\ref{prop_dimJS}, the dimension of \(\JS{4}\) is \(2\), with a basis being \((\wp^2,\eisen_4)\). By equating the coefficients of \(1/z^4\) and the constant term, we find
\[
\Ob(\wp)=-2(\wp^2-10\eisen_4). 
\]
From this, we deduce
\begin{equation}\label{eq_dtaurho}
-4\dtau\wp=\Eisen_1\dz\wp+2\wp^2-2\eisen_2\wp-20\eisen_4. 
\end{equation}
Equating the coefficients of \(z^{2n}\) for all \(n\geq 1\) then leads to
\begin{multline}\label{eq_gunther}
2(2n+1)\dtau\eisen_{2n+2}\\=(n+1)(2n+1)\eisen_{2n+2}\eisen_2-(n+2)(2n+5)\eisen_{2n+4}+\sum_{\mathclap{\substack{a \geq 1, b \geq 1\\a+b=n}}}(2a+1)(a-2b-1)\eisen_{2a+2}\eisen_{2b+2}. 
\end{multline}
In particular, for \(n=1\) and \(n=2\) (and considering the equality \(\eisen_8=\frac{3}{7}\eisen_4^2\), which is a consequence of the fact that the space of modular forms of weight \(8\) is of dimension \(1\)), we recover using \eqref{eq_diffwp} the equations of Ramanujan
\begin{subequations}
 \label{eq_ramanujan}
\begin{align}
\dtau\eisen_4 &=\eisen_4\eisen_2-\frac{7}{2}\eisen_6\label{eq_ramae4}\\
&=-\frac{1}{10}\wp^3+\frac{1}{40}\left(\dz\wp\right)^2+\frac{3}{2}\wp\eisen_4+\eisen_4\eisen_2\label{eq_ramae4sanse6}\\
\dtau\eisen_6&=\frac{3}{2}\eisen_6\eisen_2-\frac{15}{7}\eisen_4^2.\label{eq_ramae6} 
\end{align}
\end{subequations}
In particular,
\begin{equation}\label{eq_Obeq}
	\Ob(\eisen_4)=-14\eisen_6=-\frac{2}{5}\wp^3+6\wp\eisen_4+\frac{1}{10}\left(\dz\wp\right)^2.  
\end{equation}	

Thanks to Remark~\ref{rem_profdz}, the function \(\dz^2\wp\) is a singular Jacobi form of weight \(4\) and thus a linear combination of \(\wp^2\) and \(\eisen_4\). By equating the terms in \(z^{-4}\) and the constant terms of the Laurent series expansion, we obtain:
\begin{equation}\label{eq_ddeuxzrho}
\dz^2\wp=6(\wp^2-5\eisen_4). 
\end{equation}

The function \(\dz\wp\) is a singular Jacobi form of weight \(3\), and \(\Ob(\dz\wp)\) is therefore a singular Jacobi form of weight \(5\); the space \(\JS{5}\) has dimension \(1\) spanned by \(\wp\dz\wp\). By equating the coefficients of \(1/z^5\), we find
\[
\Ob(\dz\wp)=-3\wp\dz\wp
\]
from which we deduce
\begin{equation}\label{eq_dtaudzrho}
\dtau\dz\wp=\frac{3}{2}(5\eisen_4-\wp^2)\Eisen_1+\frac{3}{4}(-\wp+\eisen_2)\dz\wp.
\end{equation}

By Proposition~\ref{prop_ObstabJS}, \(\Ob(\Eisen_1)\in\QJS{3}{1}{1}\). We have \(\cQ_{1,1}\left(\Ob(\Eisen_1)\right)=-4\pi^2\), then \(\cQ_{1,0}\left(\Ob(\Eisen_1)\right)=2\ic\pi\Eisen_1=\cQ_{1,0}(-\Eisen_1\eisen_2)\) and \(\cQ_{0,1}\left(\Ob(\Eisen_1)\right)=-2\ic\pi\eisen_2=\cQ_{0,1}(-\Eisen_1\eisen_2)\); we conclude that \(\Ob(\Eisen_1)+\Eisen_1\eisen_2\in\JS{3}=\CC\dz\wp\). Finally,
\begin{equation}\label{eq_ObEun}
\Ob(\Eisen_1)=\frac{1}{2}\dz\wp-\Eisen_1\eisen_2. 
\end{equation}
It follows that \(\AlgQJell\) is not stable under \(\Ob\). Given \eqref{eq_Euwped}, we then obtain
\begin{equation}\label{eq_dtauEun}
4\dtau\Eisen_1=\Eisen_1\eisen_2+\wp\Eisen_1+\frac{1}{2}\dz\wp.
\end{equation}
Similarly, \(\Ob(\eisen_2)\in\QJS{4}{2}{0}\). By \eqref{eq_Qob}, \(\cQ_{2,0}\left(\Ob(\eisen_2)\right)=4\pi^2=\cQ_{2,0}(-\eisen_2^2)\), then \(\cQ_{1,0}\left(\Ob(\eisen_2)\right)=4\ic\pi\eisen_2=\cQ_{1,0}(-\eisen_2^2)\). We deduce that \(\Ob(\eisen_2)+\eisen_2^2\in\JS{4}=\CC\wp^2+\CC\eisen_4\). The \(z\)-dependence shows that \(\Ob(\eisen_2)+\eisen_2^2\in\CC\eisen_4\), and the calculation of the first Fourier coefficient allows us to recover the image of \(\eisen_2\) under the Serre derivation:
\begin{equation}\label{eq_ObEdeux}
 \Ob(\eisen_2) = -\eisen_2^2-5\eisen_4,
\end{equation}
and thus the equation of Ramanujan
\begin{equation}\label{eq_ramaed}
\dtau\eisen_2=\frac{1}{4}\left(\eisen_2^2-5\eisen_4\right).
\end{equation}
\section{Rankin-Cohen brackets and formal deformations}
This section is dedicated to the construction of formal deformations (see \cite[\texten{Chapter}~13]{zbMATH06054532}, \cite[\S~1.1]{zbMATH07362171}) of the various quasi-Jacobi form algebras studied previously.
\subsection{Rankin-Cohen brackets of quasi-Jacobi forms of quasielliptic type}\label{RCquasiell}
According to Proposition~\ref{prop_stabder}, the modular derivation \(\dtau\) of \(\AlgQJS\) is homogeneous of degree \(2\) for this grading: \(\dtau(\QJSpoids{k})\subseteq\QJSpoids{k+2}\) for all \(k\geq 0\). We can then define a formal deformation of \(\AlgQJS\) in the style of formal Rankin-Cohen brackets as defined in \cite{zbMATH07362171}.

\begin{prop}\label{RCtauQ}Consider the sequence \((\crochet{}{}{n})_{n\geq 0}\) of applications from \(\AlgQJS\times \AlgQJS\) to \(\AlgQJS\) defined by bilinear extension of
\begin{equation}\label{RCtau}
\crochet{f}{g}{n}=\sum_{r=0}^n(-1)^r\binom{k+n-1}{n-r}\binom{\ell+n-1}{r}\dtau^r(f)\dtau^{n-r}(g)
\end{equation}
for all \(f\in\QJSpoids{k},g\in\QJSpoids{\ell}\). Then:
\begin{enumerate}[font=\normalfont,label={(\roman*)}]
\item\label{item_unprop} \([\QJSpoids{k},\QJSpoids{\ell}]_n\subseteq\QJSpoids{k+\ell+2n}\) for all \(n,k,\ell\geq 0\).
\item\label{item_deuxprop} The sequence \((\crochet{}{}{n})_{n\geq 0}\) is a formal deformation of \(\AlgQJS\).
\item\label{item_troisprop} The subalgebra \(\AlgM\) is stable under the applications \(\crochet{}{}{n}\), with their restriction coinciding with the classical Rankin-Cohen brackets on modular forms.
\end{enumerate}
\end{prop}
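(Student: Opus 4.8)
The plan is to prove the three assertions in sequence, exploiting that $\dtau$ is a derivation of $\AlgQJS$ which is homogeneous of degree $2$ for the weight grading. Assertion \ref{item_unprop} follows immediately from this homogeneity: if $f\in\QJSpoids{k}$ and $g\in\QJSpoids{\ell}$, then $\dtau^r(f)\in\QJSpoids{k+2r}$ and $\dtau^{n-r}(g)\in\QJSpoids{\ell+2(n-r)}$, so each product $\dtau^r(f)\dtau^{n-r}(g)$ lies in $\QJSpoids{k+\ell+2n}$ by the multiplicativity of the grading (the direct sum decomposition noted after Definition~\ref{dfn_JS}); hence so does the linear combination $\crochet{f}{g}{n}$.

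For assertion \ref{item_deuxprop} I would invoke the general algebraic machinery: the statement that a graded commutative algebra $A=\bigoplus_k A_k$ equipped with a derivation $D$ homogeneous of degree $2$ admits the sequence $(\crochet{}{}{n})$ defined by formula~\eqref{RCtau} as a formal deformation is purely formal — it depends only on $A$ being commutative and $D$ being a degree-$2$ derivation, not on any analytic input. This is precisely the content of \cite{zbMATH07362171} (and of the principle from \cite{MR1280058}); so the plan is to verify that $(\AlgQJS,\dtau)$ satisfies the hypotheses of that theorem and cite it. One should be mildly careful about the grading convention (weight $k$ versus the ``$k-1$'' shifts appearing in the binomial coefficients) to match the normalization used in the cited reference, but this is bookkeeping rather than a genuine obstacle.

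For assertion \ref{item_troisprop}, the key point is that $\AlgM=\CC[\eisen_4,\eisen_6]$ is a subalgebra of $\AlgQJS$ which is \emph{not} stable under $\dtau$, but which \emph{is} stable under the brackets. The cleanest route is to observe that $\AlgQM=\CC[\eisen_4,\eisen_6,\eisen_2]$ \emph{is} stable under $\dtau$ (by the Ramanujan equations, e.g. \eqref{eq_ramae4}, \eqref{eq_ramae6}, \eqref{eq_ramaed}), so the restriction of $(\crochet{}{}{n})$ to $\AlgQM$ is a formal deformation of $\AlgQM$ by the same general theorem; and then that on $\AlgQM$ the brackets \eqref{RCtau} are literally the classical Cohen--Rankin brackets extended to quasimodular forms (the formula is the same, with $\dtau$ the Serre-type derivative). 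The classical fact — going back to \cite{MR1280058}, see also \cite[Proposition~9]{zbMATH07362171} — is that although $\dtau^r(f)$ may leave $\AlgM$, the specific alternating combination \eqref{RCtau} lands back in $\AlgM$ whenever $f,g\in\AlgM$: the depth-$1$ (in $\eisen_2$) contributions cancel identically. So the plan is to cite this classical stability result and note that the restriction to $\AlgM$ of our $\crochet{}{}{n}$ coincides termwise with it, hence gives the classical Rankin--Cohen brackets.

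The only step requiring genuine care — and the one I expect to be the main obstacle — is making the normalization in \eqref{RCtau} match both the reference \cite{zbMATH07362171} and the classical Rankin--Cohen convention simultaneously: one must check that with weights $k,\ell$ (rather than $k-1,\ell-1$, or $2k,2\ell$, as various sources use for modular forms of weight $k$) the binomial coefficients $\binom{k+n-1}{n-r}\binom{\ell+n-1}{r}$ are exactly those for which the alternating sum closes on $\AlgM$ and for which the cocycle/associativity-type identities of \cite{zbMATH07362171} hold. Once the dictionary of conventions is fixed, all three parts are short: \ref{item_unprop} is grading homogeneity, \ref{item_deuxprop} is a citation of the general deformation theorem applied to $(\AlgQJS,\dtau)$, and \ref{item_troisprop} is the citation of the classical $\AlgM$-stability together with the observation that our formula restricts to it.
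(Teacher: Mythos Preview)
Your proposal is correct and follows essentially the same route as the paper: the paper's proof is even terser, simply citing \cite[Proposition~3]{zbMATH07362171} for \ref{item_unprop} and \ref{item_deuxprop} (the general algebraic result for a graded algebra with a degree-$2$ derivation), and citing the classical stability result (e.g.\ \cite[\S 5.2]{zbMATH05808162}) for \ref{item_troisprop}. Your detour through $\AlgQM$ for \ref{item_troisprop} is unnecessary but harmless, and your concern about normalizations is legitimate bookkeeping rather than a gap.
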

\begin{proof} Points~\ref{item_unprop} and~\ref{item_deuxprop} follow from a direct application of the general algebraic result of \cite[\texten{Proposition}~3]{zbMATH07362171}. Point~\ref{item_troisprop} is the classical result proven, for example, in \cite[\S 5.2]{zbMATH05808162}.
\end{proof}
We have seen in \S~\ref{subsec_jacell} that the subalgebra \(\AlgQJell\) is not stable under the derivation \(\dtau\). However, it is stable under the deformation above.
\begin{thm}\label{RCtauA}
The subalgebra \(\AlgQJell\) is stable under the sequence of Rankin-Cohen brackets \(\left(\crochet{}{}{n}\right)_{n\geq0}\).
\end{thm}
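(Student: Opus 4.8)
The plan is to deduce the statement from the general algebraic criterion of \cite[\texten{Theorem}~6]{zbMATH07362171}, as announced in the introduction. The underlying mechanism is this: although \(\dtau\) does not stabilize \(\AlgQJell\), it differs from a derivation that \emph{does} stabilize \(\AlgQJell\) by the operator \(f\mapsto\tfrac14\eisen_2\,Ef\), where \(E\) is the weight operator (\(Ef=kf\) for \(f\in\QJSpoids{k}\)); and the scalar \(\tfrac14\) here is the very same one appearing in Ramanujan's identity \(\dtau\eisen_2=\tfrac14\eisen_2^2-\tfrac54\eisen_4\) from~\eqref{eq_ramaed}. It is exactly this coincidence of scalars that forces the \(\eisen_2\)-contributions to cancel in the brackets \(\crochet{\cdot}{\cdot}{n}\), and \cite[\texten{Theorem}~6]{zbMATH07362171} is the abstract packaging of this cancellation. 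One already sees it by hand for \(n=1\): setting \(D':=\dtau-\tfrac14\eisen_2 E\), one gets, for \(f\in\QJSpoids{k}\) and \(g\in\QJSpoids{\ell}\), \(\crochet{f}{g}{1}=k\,f\,\dtau(g)-\ell\,\dtau(f)\,g=k\,f\,D'(g)-\ell\,D'(f)\,g\), the two \(\eisen_2 fg\) terms cancelling.

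The first concrete step is to verify that \(D'=\dtau-\tfrac14\eisen_2 E\) is a derivation of \(\AlgQJS\) (clear: \(\dtau\) and \(\eisen_2 E\) are derivations of \(\AlgQJS\), the latter because \(E\) is one) which restricts to a derivation of \(\AlgQJell=\CC[\wp,\dz\wp,\eisen_4,\Eisen_1]\). By the Leibniz rule this reduces to checking that \(D'\) carries each of the four generators into \(\AlgQJell\), and this is immediate from the identities collected in \S\ref{subsubsec_application}: from~\eqref{eq_dtaurho}, \(D'(\wp)=-\tfrac14\Eisen_1\dz\wp-\tfrac12\wp^2+5\eisen_4\); from~\eqref{eq_dtaudzrho}, \(D'(\dz\wp)=\tfrac32(5\eisen_4-\wp^2)\Eisen_1-\tfrac34\wp\,\dz\wp\); from~\eqref{eq_ramae4sanse6}, \(D'(\eisen_4)=-\tfrac1{10}\wp^3+\tfrac1{40}(\dz\wp)^2+\tfrac32\wp\,\eisen_4\); and from~\eqref{eq_dtauEun}, \(D'(\Eisen_1)=\tfrac14\wp\,\Eisen_1+\tfrac18\dz\wp\). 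All four values lie in \(\AlgQJell\), so \(D'(\AlgQJell)\subseteq\AlgQJell\); equivalently, on \(\AlgQJell\) the derivation \(\dtau\) agrees, modulo \(\AlgQJell\), with \(\tfrac14\eisen_2 E\). I would then record the complementary fact that \(\AlgQJS=\AlgQJell[\eisen_2]\) is a polynomial algebra in the weight-\(2\) variable \(\eisen_2\) (Theorem~\ref{thm_strucQJ} together with Lemma~\ref{lem_indepalgcinq}), with \(\dtau\eisen_2-\tfrac14\eisen_2^2=-\tfrac54\eisen_4\in\AlgQJell\) by~\eqref{eq_ramaed}.

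With these in hand, \((\AlgQJS,\dtau)\), the subalgebra \(B=\AlgQJell\), the polynomial variable \(\xi=\eisen_2\) of weight \(2\) and the scalar \(\kappa=\tfrac14\) meet the hypotheses of \cite[\texten{Theorem}~6]{zbMATH07362171}: a graded algebra with a degree-\(2\) derivation \(D\), a polynomial generator \(\xi\) of weight \(2\) with \(D\xi-\kappa\xi^2\in B\), and a graded subalgebra \(B\) on which \(D-\kappa\,\xi E\) is a derivation. Its conclusion is exactly that the Rankin--Cohen brackets \(\crochet{\cdot}{\cdot}{n}\) of \((\AlgQJS,\dtau)\) introduced in Proposition~\ref{RCtauQ} stabilize \(\AlgQJell\), which is the assertion. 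It moreover gives that the brackets induced on \(\AlgQJell\) extend the classical Rankin--Cohen brackets on \(\AlgM\); since \(\AlgM\subset\AlgQJell\) and formula~\eqref{RCtau} is unchanged, this last point is already contained in Proposition~\ref{RCtauQ}.

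The only genuinely delicate point --- and the one I would be careful about --- is the bookkeeping identifying our situation with the abstract hypothesis, specifically the observation that the scalar \(\tfrac14\) in \(D'=\dtau-\tfrac14\eisen_2 E\) is \emph{forced} to equal the one in \(\dtau\eisen_2=\tfrac14\eisen_2^2-\tfrac54\eisen_4\): were these different, the \(\eisen_2\)-terms would not cancel and \(\AlgQJell\) would fail to be stable. (Contrast \(\Ob=4D'+\Eisen_1\dz\), which fails to stabilize \(\AlgQJell\) precisely because of the extra elliptic term, cf.~\eqref{eq_ObEun}.) Granting this, the proof is just the four generator computations above plus the citation of the general lemma; no new analytic input is needed.
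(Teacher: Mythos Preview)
Your proof is correct and follows essentially the same approach as the paper: your derivation \(D'=\dtau-\tfrac14\eisen_2 E\) is exactly the paper's \(\theta=\dtau-\tfrac12\eisen_2\Delta\) (since \(E=2\Delta\)), and both arguments reduce to verifying that \(\theta\) stabilizes \(\AlgQJell\) and that \(x=\tfrac14\eisen_2\) satisfies \(\theta(x)=-x^2+h\) with \(h\in\AlgQJell\), before invoking \cite[\texten{Theorem}~6]{zbMATH07362171}. The only cosmetic difference is that the paper checks \(\theta(\AlgQJell)\subseteq\AlgQJell\) via the decomposition \(\theta=\tfrac14(\Ob-\Eisen_1\dz)\) and the known stability of \(\AlgJS\) under \(\Ob\) and \(\dz\) (so only \(\theta(\Eisen_1)\) needs explicit computation), whereas you verify it directly on all four generators.
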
 
\begin{proof}
We use the general method of extension-restriction formulated in \texten{Theorem~6} of \cite{zbMATH07362171}. 
We consider the inclusion \(A\subset R\) where we denote \(R=\AlgQJS\) and \(A=\AlgQJell\).
We denote by \(\delta\) the derivation of \(R\) defined by multiplication by half the weight, that is defined by linear extension of
\begin{equation}
\delta(f)=\tfrac{k}{2}f\quad\text{for all \(f\in \QJSpoids{k}.\)}
\end{equation}
We further introduce the derivation of \(R\) defined by
\begin{equation}\label{defDelta}
\theta=\tfrac{1}{4}(\Ob-\Eisen_1\dz)=\dtau-\tfrac{1}{2}e_2\delta.
\end{equation} 
It is clear that \(\delta(A) \subseteq A\). Furthermore, \(A=\AlgJS[\Eisen_1]\), the derivations \(\dz\) and \(\Ob\) stabilize \(\AlgJS\) by Table~\vref{tab_stabderalginter}, hence \(\theta(\AlgJS)\subseteq A\) and
\[
\theta(\Eisen_1)=\frac{1}{8}(\dz\wp+2\wp\Eisen_1)
\] 
thanks to~\eqref{eq_Euwped} and~\eqref{eq_ObEun}. We deduce that \(\theta(A) \subseteq A\).

Moreover, the derivation \(\theta\) is homogeneous of degree 2 for the grading defined by the weight on \(R\) and we have
\begin{equation}\label{Deltatheta}
\delta\theta-\theta\delta=\theta.
\end{equation}
We set \(x=\frac{1}{4}\eisen_2\), which satisfies \(x\in R\) and \(x\notin A\). It satisfies \(\delta(x)=x\) and~\eqref{eq_ObEdeux} shows that \(\theta(x)=-x^2-\frac{5}{16}\eisen_4\). Setting \(h=-\frac{5}{16}\eisen_4\), we have \(h\in A\) with \(\delta(h)=2h\) and \(\theta(x)=-x^2+h\). 

We are thus exactly in the conditions for applying \texten{Theorem~6} of~\cite{zbMATH07362171} with \(\dtau=\theta+2x\delta\), and we conclude that the sequence \((\CM^{\dtau,\delta}_n)_{n\geq 0}\) of Connes-Moscovici brackets associated with the two derivations \(\dtau\) and \(\delta\) defines by restriction to \(A\) a formal deformation of \(A\). These brackets are none other than the Rankin-Cohen brackets \(\left(\crochet{}{}{n}\right)_{n\geq 0}\) as verified by an immediate combinatorial calculation (see, for example, the proof of \texten{Proposition~3} of \cite{zbMATH07362171}).
\end{proof}
\begin{cor} The sequence \((\crochet{}{}{n})_{n\geq 0}\) is a formal deformation of \(\AlgQJell\), which extends the sequence of classical Rankin-Cohen brackets on modular forms.
\end{cor}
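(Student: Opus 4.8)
The plan is to derive the corollary directly from Theorem~\ref{RCtauA} and Proposition~\ref{RCtauQ}, with essentially no new computation. Write $R=\AlgQJS$ and $A=\AlgQJell$. First I would recall that, by point~\ref{item_deuxprop} of Proposition~\ref{RCtauQ}, the sequence $(\crochet{}{}{n})_{n\geq 0}$ is a formal deformation of $R$: the associated star product $f\star g=\sum_{n\geq 0}\crochet{f}{g}{n}\planck^n$ is an associative $\CC[[\planck]]$-bilinear product on $R[[\planck]]$ reducing to the product of $R$ modulo $\planck$. The key observation is then that associativity of $\star$ is a family of identities among elements of $R[[\planck]]$, so it is automatically inherited by any subalgebra of $R$ that is stable under all the brackets $\crochet{}{}{n}$; and Theorem~\ref{RCtauA} provides precisely this stability for $A=\AlgQJell$. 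Hence the structure maps of $\star$ restrict to $A[[\planck]]$ and make $(\crochet{}{}{n})_{n\geq 0}$ a formal deformation of $A$. Equivalently, this is exactly the output of the Connes--Moscovici construction already used in the proof of Theorem~\ref{RCtauA}, where the brackets were identified with the brackets $(\CM^{\dtau,\Delta}_n)_{n\geq 0}$ whose restriction to $A$ was shown there to be a formal deformation of $A$.

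Next I would settle the extension property. For this I would invoke the chain of graded subalgebras $\AlgM\subset\AlgJS\subset\AlgQJell=A$ recorded in \S~\ref{subsec_jacell}, together with point~\ref{item_troisprop} of Proposition~\ref{RCtauQ}, by which the operators $\crochet{}{}{n}$ restrict on $\AlgM$ to the classical Rankin--Cohen brackets on modular forms. Composing the two restrictions---first from $R$ to $A$, then from $A$ down to its subalgebra $\AlgM$---then shows that the formal deformation of $\AlgQJell$ just obtained restricts on $\AlgM$ to the classical Rankin--Cohen brackets, which is the asserted extension.

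I do not expect any genuine obstacle here: the corollary is a bookkeeping consequence of the two preceding statements, and the only point worth spelling out is the elementary fact that the defining identities of a formal deformation are inherited by any subalgebra stable under all the brackets.
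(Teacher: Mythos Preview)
Your proposal is correct and matches the paper's approach exactly: the corollary is stated without proof in the paper, as an immediate consequence of Theorem~\ref{RCtauA} (stability of $\AlgQJell$ under the brackets) together with points~\ref{item_deuxprop} and~\ref{item_troisprop} of Proposition~\ref{RCtauQ}. Your observation that the formal-deformation identities are inherited by any stable subalgebra, plus the inclusion $\AlgM\subset\AlgQJell$, is precisely the intended argument.
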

\begin{rem}\label{nonstabE} The subalgebras \(\AlgQJmod\) and \(\AlgJS\) are not stable under the brackets \(\crochet{}{}{n}\). For example, it follows from~\eqref{eq_dtaurho} and~\eqref{eq_ramae4} that \(\crochet{\eisen_4}{\wp}{1}\) is of depth \((0,1)\), hence it does not belong to either of these subalgebras. In the following, we construct a formal deformation of \(\AlgJS\) which extends the classical Rankin-Cohen brackets on modular forms.
\end{rem}

\subsection{Rankin-Cohen brackets of singular Jacobi forms}\label{RCsing}
We start by establishing a variant of Proposition~\ref{RCtauQ} by introducing in \(\AlgQJS\) the derivation
\begin{equation}\label{defd}
d=\dtau +\frac{1}{4}\Eisen_1\dz=\frac{1}{4}\Ob+\frac 12 \eisen_2 \delta
\end{equation}
where \(\delta\) is defined by the formula \eqref{defDelta}.
\begin{prop}\label{RCdQ}Consider the sequence \((\Crochet{}{}{n})_{n\geq 0}\) of applications from \(\AlgQJS\times\AlgQJS\) to \(\AlgQJS\) defined by bilinear extension of
\begin{equation}\label{RCd}
\Crochet{f}{g}{n}=\sum_{r=0}^n(-1)^r\binom{k+n-1}{n-r}\binom{\ell+n-1}{r}d^r(f)d^{n-r}(g)
\end{equation}
for all \(f\in\QJSpoids{k},g\in\QJSpoids{\ell}\). Then:
\begin{enumerate}[font=\normalfont,label={(\roman*)}]
\item \(\Crochet{\QJSpoids{k}}{\QJSpoids{\ell}}{n}\subset\QJSpoids{k+\ell+2n}\) for all \(n,k,\ell\geq 0\).
\item The sequence \((\Crochet{}{}{n})_{n\geq 0}\) is a formal deformation of \(\AlgQJS\).
\item The subalgebra \(\AlgM\) is stable under the applications \(\Crochet{}{}{n}\), their restriction coinciding with the classical Rankin-Cohen brackets on modular forms.
\end{enumerate}
\end{prop}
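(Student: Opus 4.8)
The plan is to mirror, almost verbatim, the proof of Proposition~\ref{RCtauQ}: the only structural property of the derivation used there to invoke the general machinery of \cite{zbMATH07362171} is that it be a derivation of the graded algebra $\AlgQJS$ which is homogeneous of degree~$2$ for the weight grading. So the first step is to check that $d$, as defined in~\eqref{defd}, is such a derivation. This is immediate from Proposition~\ref{prop_stabder}: both $\dtau$ and $\dz$ are derivations of $\AlgQJS$, with $\dtau$ raising the weight by~$2$ and $\dz$ raising it by~$1$; since $\Eisen_1\in\QJSpoids{1}$, the operator $g\mapsto\Eisen_1\dz(g)$ is a derivation raising the weight by~$2$, and hence so is $d=\dtau+\tfrac14\Eisen_1\dz$. (Equivalently, one may use the second expression $d=\tfrac14\Ob+\tfrac12\eisen_2\Delta$ in~\eqref{defd}: $\Ob$ is a weight-degree-$2$ derivation by construction, $\Delta$ is a weight-degree-$0$ derivation, and $\eisen_2\in\QJSpoids{2}$, so $\eisen_2\Delta$ is also of degree~$2$.)

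Once this is established, points~(i) and~(ii) follow from a direct application of the general algebraic result \cite[\texten{Proposition}~3]{zbMATH07362171} to $\AlgQJS=\bigoplus_{k}\QJSpoids{k}$ equipped with the weight-degree-$2$ derivation $d$, exactly as in the proof of Proposition~\ref{RCtauQ}.

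For point~(iii), the key observation is that under the identification of a function of $\tau$ alone with a function on $\pkC$ constant in $z$, one has $\dz(f)=0$ for every quasimodular form $f$, and $\dtau$ preserves the property of being independent of $z$; hence $d$ and $\dtau$ \emph{agree} on the subalgebra $\AlgQM\subseteq\AlgQJS$. In particular $d^r(f)=\dtau^r(f)$ for all $r\geq 0$ whenever $f\in\AlgM\subseteq\AlgQM$, so for $f,g\in\AlgM$ the bracket $\Crochet{f}{g}{n}$ coincides term by term with the bracket $\crochet{f}{g}{n}$ of Proposition~\ref{RCtauQ}. The assertion then follows from Proposition~\ref{RCtauQ}\,\ref{item_troisprop} (the classical computation of \cite[\S~5.2]{zbMATH05808162}).

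The proposition is thus essentially a corollary of Proposition~\ref{RCtauQ} together with the general deformation machinery, and I expect no serious obstacle. The one point deserving care — and the one a referee is likely to probe — is that $\AlgM$ is \emph{not} stable under $d$ any more than it is under $\dtau$, so the stability of the brackets on $\AlgM$ in~(iii) is not formal but is precisely the classical Rankin-Cohen phenomenon, re-used here through the coincidence $d|_{\AlgQM}=\dtau|_{\AlgQM}$.
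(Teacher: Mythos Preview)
Your proposal is correct and follows essentially the same approach as the paper, which simply notes that $d$ is homogeneous of degree~$2$ and invokes \cite[\texten{Proposition}~3]{zbMATH07362171} again. Your treatment is more explicit---particularly in spelling out why $d$ has degree~$2$ and why $d|_{\AlgM}=\dtau|_{\AlgM}$ reduces~(iii) to Proposition~\ref{RCtauQ}\,\ref{item_troisprop}---but the underlying argument is identical.
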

\begin{proof} The derivation \(d\) is homogeneous of degree \(2\). Therefore, it suffices once again to apply \texten{Proposition~3} from \cite{zbMATH07362171}.
\end{proof}
The algebra \(\AlgJS\) is not stable under the derivation \(d\); in fact, it is stable under \(\Ob\) but does not contain \(\eisen_2\). However, it is stable under the above deformation.
\begin{thm}\label{RCtauE}
The subalgebra \(\AlgJS\) is stable under the sequence of Rankin-Cohen brackets \(\left(\Crochet{}{}{n}\right)_{n\geq0}\).
\end{thm}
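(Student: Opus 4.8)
The plan is to run the extension--restriction argument of \texten{Theorem~6} of \cite{zbMATH07362171} once more, exactly as in the proof of Theorem~\ref{RCtauA}, but with the auxiliary data adapted to the smaller algebra $A=\AlgJS$ sitting inside $R=\AlgQJS$. The key observation is that although $\AlgJS$ fails to be stable under $d$ --- because the decomposition $d=\tfrac14\Ob+\tfrac12\eisen_2\Delta$ from~\eqref{defd} involves the function $\eisen_2\notin\AlgJS$ --- the Oberdieck derivation $\Ob$ on its own \emph{does} stabilize $\AlgJS$, by Proposition~\ref{prop_ObstabJS}\,(2). So the right move is to peel off the $\eisen_2\Delta$ part of $d$ and keep $\Ob$ as the homogeneous derivation that survives on $A$.

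Accordingly I would take $\theta=\tfrac14\Ob$ and $x=\tfrac14\eisen_2$, so that $d=\theta+2x\Delta$ with $\Delta$ the derivation of multiplication by half the weight introduced in the proof of Theorem~\ref{RCtauA}. The structural hypotheses of \texten{Theorem~6} of \cite{zbMATH07362171} are then checked in turn: $\Delta(A)\subseteq A$ because $\AlgJS$ is graded by weight; $\theta(A)\subseteq A$ is precisely Proposition~\ref{prop_ObstabJS}\,(2); $\theta$ is homogeneous of degree $2$ for the weight grading because $\Ob$ is, hence $\Delta\theta-\theta\Delta=\theta$; and $x\in R\setminus A$ with $\Delta(x)=x$ since $\eisen_2$ has weight $2$. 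The only computation is the quadratic relation for $x$: from $\Ob(\eisen_2)=-\eisen_2^2-5\eisen_4$ in~\eqref{eq_ObEdeux} one gets $\theta(x)=\tfrac1{16}\Ob(\eisen_2)=-x^2-\tfrac5{16}\eisen_4$, so that $\theta(x)=-x^2+h$ with $h=-\tfrac5{16}\eisen_4\in A$ and $\Delta(h)=2h$.

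With all the hypotheses in place, \texten{Theorem~6} of \cite{zbMATH07362171} applies to the pair of derivations $(d,\Delta)$ with $d=\theta+2x\Delta$, and yields that the associated Connes--Moscovici brackets $\CM^{d,\Delta}_n$ restrict to $A$ and define there a formal deformation; a routine combinatorial identification (as in the proof of \texten{Proposition~3} of \cite{zbMATH07362171}) shows these brackets are the $\Crochet{}{}{n}$ of Proposition~\ref{RCdQ}, which gives the stability of $\AlgJS$ claimed. I do not anticipate a genuine obstacle: the whole content is already packaged in Proposition~\ref{prop_ObstabJS} and in \texten{Theorem~6} of \cite{zbMATH07362171}, and the single point to get right is that here the correct auxiliary derivation is $\theta=\tfrac14\Ob$ itself --- and not $\tfrac14(\Ob-\Eisen_1\dz)$, which was the choice forced in Theorem~\ref{RCtauA} by the different target algebra $\AlgQJell$.
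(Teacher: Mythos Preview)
Your proposal is correct and matches the paper's proof essentially line for line: the paper likewise takes $A=\AlgJS$, $R=\AlgQJS$, the auxiliary derivation $\theta'=\tfrac14\Ob$ (your $\theta$), the same $x=\tfrac14\eisen_2$ and $h=-\tfrac5{16}\eisen_4$, verifies the identical hypotheses via Proposition~\ref{prop_ObstabJS} and~\eqref{eq_ObEdeux}, and invokes \texten{Theorem~6} of \cite{zbMATH07362171} to conclude. Your closing remark distinguishing the choice of $\theta$ here from the choice $\tfrac14(\Ob-\Eisen_1\dz)$ in Theorem~\ref{RCtauA} is also exactly the point the paper's two proofs illustrate side by side.
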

\begin{proof}
We reuse the structure of the proof of Theorem~\ref{RCtauA}, with \(A\subset R\) for \(R=\AlgQJS\) and \(A=\AlgJS\). This time we introduce the derivation of \(R\) defined by
\(\theta'=\tfrac{1}{4}\Ob\).
According to Proposition~\ref{prop_ObstabJS}, we have \(\delta(A)\subset A\) and \(\theta'(A)\subset A\).

Since \(\theta'\) is homogeneous of degree \(2\), we again have
\begin{equation}\label{Deltathetaprime}
\delta\theta'-\theta'\delta=\theta'.
\end{equation}
The same elements \(x=\frac{1}{4}\eisen_2\) and \(h=-\frac{5}{16}\eisen_4\) satisfy
\[
h\in A,\ \  x\in R, \ \  x\notin A, \ \ \delta(x)=x,\ \ \delta(h)=2h,\ \ \theta'(x)=-x^2+h.
\]

Thus, we conclude in exactly the same way by applying \texten{Theorem 6} from \cite{zbMATH07362171}, this time with \(d=\theta'+2x\delta\), so that the sequence \((\CM^{d,\delta}_n)_{n\geq 0}\) of Connes-Moscovici brackets associated with the two derivations \(d\) and \(\delta\) defines by restriction to \(A\) a formal deformation of \(A\) that coincides with the sequence of Rankin-Cohen brackets \((\Crochet{}{}{n})_{n\geq 0}\) considered here.
\end{proof}
\begin{cor}
The sequence \(\left(\Crochet{}{}{n}\right)_{n\geq0}\) is a formal deformation of \(\AlgJS\), which extends the sequence of classical Rankin-Cohen brackets on modular forms.
\end{cor}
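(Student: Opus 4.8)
The statement packages together Theorem~\ref{RCtauE} and Proposition~\ref{RCdQ}, and the plan is simply to assemble these. First I would invoke Theorem~\ref{RCtauE}: the subalgebra \(\AlgJS\) is stable under every bracket \(\Crochet{}{}{n}\), so \((\Crochet{}{}{n})_{n\geq 0}\) restricts to a sequence of bilinear maps \(\AlgJS\times\AlgJS\to\AlgJS\), still satisfying \(\Crochet{\AlgJS_k}{\AlgJS_\ell}{n}\subseteq\AlgJS_{k+\ell+2n}\): indeed \(\AlgJS_k=\QJS{k}{0}{0}\subset\QJSpoids{k}\), so by point~(i) of Proposition~\ref{RCdQ} the bracket lands in \(\AlgJS\cap\QJSpoids{k+\ell+2n}=\AlgJS_{k+\ell+2n}\).

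Next I would note that by Proposition~\ref{RCdQ}(ii) the sequence \((\Crochet{}{}{n})_{n\geq 0}\) is a formal deformation of \(\AlgQJS\) in the sense of \cite[\texten{Chapter}~13]{zbMATH06054532} and \cite[\S~1.1]{zbMATH07362171}; the defining relations of a formal deformation are identities between multilinear maps valid on all of \(\AlgQJS\), so restricting them to the \(\Crochet{}{}{n}\)-stable subalgebra \(\AlgJS\) shows at once that \((\Crochet{}{}{n})_{n\geq 0}\) is a formal deformation of \(\AlgJS\). Equivalently, this is precisely the conclusion already produced inside the proof of Theorem~\ref{RCtauE}, where \texten{Theorem~6} of \cite{zbMATH07362171} delivers by restriction to \(A=\AlgJS\) the formal deformation given by the Connes--Moscovici brackets \(\CM^{d,\Delta}_n\), which coincide with the \(\Crochet{}{}{n}\).

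Finally I would use that \(\AlgM\subset\AlgJS\) (Figure~\vref{fig_diagram}) together with Proposition~\ref{RCdQ}(iii), which says the restriction of the \(\Crochet{}{}{n}\) to \(\AlgM\) is the classical sequence of Rankin--Cohen brackets; hence the formal deformation of \(\AlgJS\) just obtained extends the classical Rankin--Cohen brackets on modular forms, which is the assertion.

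There is no real obstacle: the corollary is bookkeeping, and all the mathematical content sits in Theorem~\ref{RCtauE} and Proposition~\ref{RCdQ}. The one point to state carefully is that inheritance of the formal-deformation property by a subalgebra requires stability under all the brackets simultaneously — exactly what Theorem~\ref{RCtauE} guarantees, and the reason why the \(\Ob\)-stability of \(\AlgJS\) alone (it does not contain \(\eisen_2\), hence not \(\dtau(\AlgJS)\) either) is not sufficient without the extension--restriction argument.
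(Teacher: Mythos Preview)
Your proposal is correct and matches the paper's approach: the corollary is stated without proof immediately after Theorem~\ref{RCtauE}, and is meant to be read exactly as you unpack it, namely as the combination of Proposition~\ref{RCdQ} (formal deformation on \(\AlgQJS\), coinciding with the classical Rankin--Cohen brackets on \(\AlgM\)) with the stability given by Theorem~\ref{RCtauE}. Your added remark that the formal-deformation property on \(\AlgJS\) is already delivered inside the proof of Theorem~\ref{RCtauE} via \cite[Theorem~6]{zbMATH07362171} is accurate and is precisely why the paper treats this as an immediate corollary.
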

\begin{rem}
The construction of the brackets \eqref{RCd} and the stability of \(\AlgJS\) are demonstrated differently in \cite[\texten{Proposition}~2.15]{zbMATH05953688}.
\end{rem}
\begin{rem}
According to Remark~\ref{nonstabE}, the subalgebra \(\AlgQJmod\) is not stable under \((\crochet{}{}{n})_{n\geq 0}\). However, it is trivially stable under \((\Crochet{}{}{n})_{n\geq 0}\), since \(\AlgQJmod\) is stable under \(\Ob\). It is shown that \(\Crochet{\Eisen_1}{\eisen_4}{1}\) has modular depth \(1\) (for example, using \eqref{eq_Obeq} and \eqref{eq_ObEun}), so that \(\AlgQJell\) is not stable under \(\left(\Crochet{}{}{n}\right)_{n\geq0}\).
\end{rem}
\begin{rem}
 The construction of Rankin-Cohen brackets in Propositions~\ref{RCtauQ} and~\ref{RCdQ} relies on the relations \eqref{Deltatheta} and \eqref{Deltathetaprime} satisfied for the derivations used. A very different construction of a formal deformation of the algebra \(\AlgQJS\) is proposed in what follows, using the derivations \(\dtau\) and~\(\dz\), which satisfy \(\dtau \circ \dz = \dz \circ \dtau\).
\end{rem}

\subsection{Transvectants of quasi-Jacobi forms of quasimodular type}\label{subsec_transvec}
\begin{prop}\label{TransQ}
Consider the sequence \((\accol{}{}{n})_{n\geq 0}\) of bilinear applications from \(\AlgQJS\times\AlgQJS\) to \(\AlgQJS\) defined by
\begin{equation}\label{eq_TransQ}
\accol{f}{g}{n}=\sum_{r=0}^n{(-1)^r}\binom{n}{r}\dtau^{n-r}\dz^{r}(f)\dtau^{r}\dz^{n-r}(g) \quad f,g\in \AlgQJS  
\end{equation}
\begin{enumerate}[font=\normalfont,label={(\roman*)}]
\item\label{item_schubert} The sequence \((\dfrac{1}{n!}\accol{}{}{n})_{n\geq 0}\) is a formal deformation of \(\AlgQJS\).
\item\label{item_beethoven} \(\accol{\QJSpoids{k}}{\QJSpoids{\ell}}{n}\subset \QJSpoids{k+\ell+3n}\) for all \(n,k,\ell\geq 0\).
\end{enumerate}
\end{prop}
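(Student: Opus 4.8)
The plan is to recognise the bilinear maps $\accol{\cdot}{\cdot}{n}$ as the coefficients, in a formal parameter $\planck$, of a Moyal-type star product attached to the two \emph{commuting} derivations $\dtau$ and $\dz$ of $\AlgQJS$, and then to prove~\ref{item_schubert} by the usual tensor-calculus argument for associativity of such products; part~\ref{item_beethoven}, by contrast, is a one-line weight count. For~\ref{item_beethoven}: by Proposition~\ref{prop_stabder} the algebra $\AlgQJS$ is stable under $\dtau$ and $\dz$, with $\dtau(\QJSpoids{k})\subseteq\QJSpoids{k+2}$ and $\dz(\QJSpoids{k})\subseteq\QJSpoids{k+1}$; hence for $f\in\QJSpoids{k}$ and $g\in\QJSpoids{\ell}$ each summand $\dtau^{n-r}\dz^{r}(f)\,\dtau^{r}\dz^{n-r}(g)$ lies in $\QJSpoids{k+2(n-r)+r}\cdot\QJSpoids{\ell+2r+(n-r)}\subseteq\QJSpoids{k+\ell+3n}$, so that $\accol{f}{g}{n}\in\QJSpoids{k+\ell+3n}$.

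For~\ref{item_schubert}, set $R=\AlgQJS$, write $m\colon R\otimes_{\CC}R\to R$ for the multiplication and let $P=\dtau\otimes\dz-\dz\otimes\dtau\in\operatorname{End}_{\CC}(R\otimes R)$. Since $\dtau\dz=\dz\dtau$ on $R$, the operators $\dtau\otimes\dz$ and $\dz\otimes\dtau$ commute, so the binomial theorem gives $P^{n}=\sum_{r=0}^{n}(-1)^{r}\binom{n}{r}(\dtau^{n-r}\dz^{r})\otimes(\dz^{n-r}\dtau^{r})$; consequently $\accol{f}{g}{n}=m\bigl(P^{n}(f\otimes g)\bigr)$, and putting $f\star g:=\sum_{n\geq0}\tfrac{\planck^{n}}{n!}\accol{f}{g}{n}$ we get $f\star g=m\bigl(\exp(\planck P)(f\otimes g)\bigr)$ on $R[[\planck]]$. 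It remains to check that $\star$ is an associative, unital, $\CC[[\planck]]$-bilinear product reducing to the product of $R$ modulo $\planck$: bilinearity is built in, $\accol{f}{g}{0}=fg$ gives the reduction, and $\dtau(1)=\dz(1)=0$ forces $\accol{1}{g}{n}=\accol{f}{1}{n}=0$ for $n\geq1$, so $1$ is a two-sided unit. Everything thus reduces to associativity.

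To prove associativity I would work on $R^{\otimes3}$: for $\{i,j\}\subset\{1,2,3\}$ let $P_{ij}\in\operatorname{End}_{\CC}(R^{\otimes3})$ denote $P$ acting on the factors $i$ and $j$. Because $\dtau$ and $\dz$ commute on $R$, all the one-slot operators $D^{(i)}$ ($D\in\{\dtau,\dz\}$, $i\in\{1,2,3\}$) commute with one another on $R^{\otimes3}$, hence $P_{12}$, $P_{13}$, $P_{23}$ pairwise commute. The Leibniz rule $D\circ m=m\circ(D\otimes1+1\otimes D)$ for $D\in\{\dtau,\dz\}$ then yields, by a direct induction, the operator identities $\exp(\planck P)\circ(m\otimes\mathrm{id})=(m\otimes\mathrm{id})\circ\exp\bigl(\planck(P_{13}+P_{23})\bigr)$ and $\exp(\planck P)\circ(\mathrm{id}\otimes m)=(\mathrm{id}\otimes m)\circ\exp\bigl(\planck(P_{12}+P_{13})\bigr)$ as maps $R^{\otimes3}\to R^{\otimes2}$ (where on the left-hand sides $m$ multiplies the factors $1,2$, respectively $2,3$). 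Inserting $f\star g=m(\exp(\planck P)(f\otimes g))$ into $(f\star g)\star h$, applying the first identity and then the commutativity relation $\exp\bigl(\planck(P_{13}+P_{23})\bigr)\exp(\planck P_{12})=\exp\bigl(\planck(P_{12}+P_{13}+P_{23})\bigr)$, one obtains $(f\star g)\star h=m_{3}\bigl(\exp(\planck(P_{12}+P_{13}+P_{23}))(f\otimes g\otimes h)\bigr)$, where $m_{3}=m\circ(m\otimes\mathrm{id})=m\circ(\mathrm{id}\otimes m)$ is the iterated product. The mirror-image computation with the second identity gives the same value for $f\star(g\star h)$, so $\star$ is associative and $\bigl(\tfrac{1}{n!}\accol{\cdot}{\cdot}{n}\bigr)_{n\geq0}$ is a formal deformation of $\AlgQJS$.

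These manipulations are routine once the setup is in place; the main ``obstacle'' is purely the bookkeeping of which tensor factor each derivation acts on while applying the Leibniz identities. The one genuinely essential point — and the reason a transvectant construction from two \emph{non}-commuting derivations would fail — is the commutativity $\dtau\circ\dz=\dz\circ\dtau$, which is precisely what makes $P_{12}$, $P_{13}$, $P_{23}$ commute and hence the Moyal exponential associative.
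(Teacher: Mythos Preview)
Your argument is correct. Both you and the paper identify $\accol{\cdot}{\cdot}{n}$ as the transvectants attached to the commuting derivations $\dtau,\dz$, i.e.\ the $\planck^n$-coefficients of a Moyal product, and both handle~\ref{item_beethoven} by the homogeneity of $\dtau$ and $\dz$ (degrees $2$ and $1$). The only difference is one of presentation: where the paper simply invokes the classical associativity of the Moyal product (citing Olver's \emph{Classical Invariant Theory}, Proposition~5.20), you supply the standard self-contained proof via the operator $P=\dtau\otimes\dz-\dz\otimes\dtau$, the Leibniz identity $P\circ(m\otimes\mathrm{id})=(m\otimes\mathrm{id})\circ(P_{13}+P_{23})$, and the commutativity of the $P_{ij}$. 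This is not a different route but a spelled-out version of the cited fact; your emphasis on $[\dtau,\dz]=0$ as the crucial hypothesis is exactly the point.
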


\begin{proof}
Point~\ref{item_schubert} is a classical result in invariant theory corresponding to the associativity of the Moyal product (see for example \cite[\texten{Proposition}~5.20]{zbMATH01516969}). 
Point~\ref{item_beethoven} follows from the fact that \(\dtau\) and \(\dz\) are homogeneous of degrees 2 and 1 respectively. 
\end{proof}

\begin{rem}
We recall the following two general properties of transvectants used subsequently. On one hand, they satisfy the recurrence relation:
\begin{equation}\label{rectrans}
\accol{f}{g}{n+1}=\accol{\dtau f}{\dz g}{n}-\accol{\dz f}{\dtau g}{n}
\end{equation}
initialized by the fact that \(\accol{}{}{0}\) is the product in \(\AlgQJS\times\AlgQJS\), and \(\accol{}{}{1}\) is the Poisson bracket \(\dtau\wedge\dz\):
\[
\accol{f}{g}{0}=fg \qquad \text{ and } \qquad \accol{f}{g}{1}=\dtau(f)\dz(g)-\dz(f)\dtau(g).
\]
On the other hand, the associativity of the star product defined on \(\AlgQJS[[\planck]]\) from
\begin{equation}\label{star}
\forall(f,g)\in\AlgQJS\times\AlgQJS \qquad f\star g=\sum_{n\geq 0}\dfrac{1}{n!}\{f,g\}_n\planck^n
\end{equation} 
is equivalent to: 
\begin{equation}\label{asstrans}
\forall(f,g,h)\in\AlgQJS\times\AlgQJS\times\AlgQJS \qquad \sum_{r=0}^n\binom{n}{r}\accol*{\accol{f}{g}{r}}{h}{n-r}=\sum_{r=0}^n\binom{n}{r}\accol*{f}{\accol{g}{h}{r}}{n-r}.
\end{equation}
\end{rem}

We have seen in \S~\ref{sec_fqjm} that \(\AlgQJmod\) is stable under \(\dz\) but not under \(\dtau\). However, it is stable under the transvectants, as we will see below. The proof requires some preliminary technical results.

\begin{lem}\label{lemmeA}
Consider the derivation \(d=\dtau +\tfrac{1}{4}\Eisen_1\dz\) of \(\AlgQJS\); we have:
\begin{enumerate}[font=\normalfont,label={(\roman*)}]
\item \(d(f)\in\AlgQJmod\) and \(\{f, g\}_1 \in \AlgQJmod\) for all \(f,g\in\AlgQJmod\);
\item \(d(E_1)\in\AlgQJmod\) and \(\{f, E_1\}_1 \in \AlgQJmod\) for all \(f\in\AlgQJmod\).
\end{enumerate}
\end{lem}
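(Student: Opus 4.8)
The plan is to reduce everything to the structural results already established, especially Theorem~\ref{thm_strucQJ} (which identifies $\AlgQJS$ and $\AlgQJmod$ as explicit polynomial algebras) and Proposition~\ref{prop_stabder} together with Equation~\eqref{eq_Euwped}, which controls $\dz\Eisen_1$. The key observation is that the derivation $d=\dtau+\tfrac14\Eisen_1\dz$ is, up to a scalar multiple, $\tfrac14\Ob+\tfrac12\eisen_2\Delta$ by~\eqref{defd}; since $\Ob$ stabilizes $\AlgQJmod$ by Proposition~\ref{prop_ObstabJS} (see also Table~\vref{tab_stabderalginter}) and $\eisen_2,\eisen_4,\wp,\dz\wp\in\AlgQJmod$, we get $d(f)\in\AlgQJmod$ for every $f\in\AlgQJmod$ immediately. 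This disposes of the first half of part~(i).

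For the bracket statements I would write $\{f,g\}_1=\dtau(f)\dz(g)-\dz(f)\dtau(g)$ and rewrite $\dtau=d-\tfrac14\Eisen_1\dz$, so that
\[
\{f,g\}_1 = d(f)\dz(g)-\dz(f)d(g)-\tfrac14\Eisen_1\dz(f)\dz(g)+\tfrac14\Eisen_1\dz(f)\dz(g) = d(f)\dz(g)-\dz(f)d(g),
\]
i.e. the $\Eisen_1$-terms cancel. Now for $f,g\in\AlgQJmod$ we have $d(f),d(g)\in\AlgQJmod$ by the previous step, and $\dz(f),\dz(g)\in\AlgQJmod$ because $\AlgQJmod$ is stable under $\dz$ (Remark~\ref{rem_profdz}, or Table~\vref{tab_stabderalginter}); hence $\{f,g\}_1\in\AlgQJmod$. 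For part~(ii) the point is that $d(\Eisen_1)=\dtau\Eisen_1+\tfrac14\Eisen_1\dz\Eisen_1$, and using~\eqref{eq_dtauEun} and~\eqref{eq_Euwped} one computes directly that $d(\Eisen_1)=\tfrac14(\Eisen_1\eisen_2+\wp\Eisen_1+\tfrac12\dz\wp)+\tfrac14\Eisen_1(-\wp-\eisen_2)=\tfrac18\dz\wp\in\AlgJS\subset\AlgQJmod$; alternatively one reads this off from $\theta(\Eisen_1)=\tfrac18(\dz\wp+2\wp\Eisen_1)$ in the proof of Theorem~\ref{RCtauA} together with $d=\theta+\tfrac12\eisen_2\Delta$ applied in weight~$1$. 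Then $\{f,\Eisen_1\}_1=d(f)\dz(\Eisen_1)-\dz(f)d(\Eisen_1)$ by the same cancellation; here $\dz(\Eisen_1)=-\wp-\eisen_2\in\AlgQJmod$ by~\eqref{eq_Euwped}, $d(f)\in\AlgQJmod$, $\dz(f)\in\AlgQJmod$, and $d(\Eisen_1)=\tfrac18\dz\wp\in\AlgQJmod$, so $\{f,\Eisen_1\}_1\in\AlgQJmod$.

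The only genuinely delicate point is making sure the cancellation of the $\Eisen_1$-terms in $\{f,g\}_1$ is correctly handled: one must substitute $\dtau=d-\tfrac14\Eisen_1\dz$ into \emph{both} factors, and the cross terms $-\tfrac14\Eisen_1\dz(f)\dz(g)$ and $+\tfrac14\Eisen_1\dz(f)\dz(g)$ coincide precisely because the Poisson bracket is antisymmetric in the two slots — this is where the specific form of the perturbation $\tfrac14\Eisen_1\dz$ (a multiple of $\dz$ times something) is essential. I would also double check that $d$ maps $\AlgQJmod$ into $\AlgQJmod$ with the \emph{correct} weight shift so that the bracket lands in the graded algebra; but since $\Ob$ raises weight by $2$ and $\eisen_2\Delta$ does too, and $\dz$ raises weight by $1$, everything is consistent with $\{f,g\}_1$ raising total weight by $3$, matching Proposition~\ref{TransQ}\ref{item_beethoven}. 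No further structural input is needed beyond what the excerpt already provides.
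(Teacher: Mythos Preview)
Your proof is correct and follows essentially the same route as the paper: rewrite \(d=\tfrac14\Ob+\tfrac12\eisen_2\Delta\) to get \(d(\AlgQJmod)\subseteq\AlgQJmod\), then observe that the \(\Eisen_1\)-terms cancel in \(\{f,g\}_1=d(f)\dz(g)-\dz(f)d(g)\), and finally compute \(d(\Eisen_1)=\tfrac18\dz\wp\) and use \(\dz\Eisen_1=-\wp-\eisen_2\in\AlgQJmod\). The only cosmetic difference is that the paper derives \(d(\Eisen_1)=\tfrac18\dz\wp\) from~\eqref{eq_ObEun} rather than from~\eqref{eq_dtauEun} and~\eqref{eq_Euwped}, but the computation is equivalent.
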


\begin{proof}
We have already considered in \eqref{defd} the derivation \(d=\dfrac{1}{4} \Ob + \dfrac{1}{2} \eisen_2 \delta\). The algebra \(\AlgQJmod=\AlgJS[\eisen_2]\) is stable under \(\Ob\) according to \S~\ref{subsubsec_application}, and thus it is stable under \(d\). We compute for all \(f,g\in \AlgQJmod\):
\[
\accol{f}{g}{1}=\partial_{\tau}(f)\dz(g)-\dz(f)\partial_{\tau}(g)=d(f)\dz(g)-\dz(f)d(g)\in\AlgQJmod
\]
since \(\AlgQJmod\) is stable under \(d\) and under \(\dz\) according to \S~\ref{sec_fqjm}. 

It follows from~\eqref{eq_ObEun} that
 \begin{equation}\label{eq_dEun}
 d(\Eisen_1)=\tfrac{1}{8}\dz\wp\in\AlgJS\subseteq\AlgQJmod.
\end{equation}
Finally, thanks to \eqref{eq_Euwped}:
\[
\accol{f}{\Eisen_1}{1}=d(f)\dz(\Eisen_1)-d(\Eisen_1)\dz(f)=-(\wp+\eisen_2)d(f)-\frac{1}{8}\dz(f)\dz\wp\in \AlgQJmod. 
\]
\end{proof}

\begin{rem}
For any \(n\in\N\), we have \(d(\Eisen_1^n)=\tfrac{n}{8}(\dz\wp)\Eisen_1^{n-1}\in\AlgQJell=\AlgJS[\Eisen_1]\). However, \(\AlgQJell\) is not stable under \(d\) since, for example, \(d\wp=\tfrac{1}{4}\Ob(\wp)+\tfrac{1}{2}\wp\eisen_2\) with \(\Ob(\wp)\in\AlgJS\) (see Proposition~\ref{prop_ObstabJS}) and \(\wp\eisen_2\notin\AlgQJell\). 
\end{rem}

\begin{lem}\label{lemmeB}
Let \(n \geq 1\) be an integer satisfying the following two properties: 
\begin{enumerate}[font=\normalfont,label={(H\arabic*)}]
 \item\label{item_hypun} for all \(f, g \in \AlgQJmod\), we have \(\accol{f}{g}{n}\in \AlgQJmod\);
 \item\label{item_hypdeux} for all \(f, g \in \AlgQJmod\), we have \(\accol{f\Eisen_1}{g}{n}-\accol{f}{g\Eisen_1}{n}\in\AlgQJmod\). 
\end{enumerate}
Then, for all \(f, g \in \AlgQJmod\), we have \(\accol{f}{g}{n+1} \in\AlgQJmod\) and \(\accol{f}{\Eisen_1}{n+1}\in\AlgQJmod\). 
\end{lem}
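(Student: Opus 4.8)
The plan is to apply the transvectant recurrence \eqref{rectrans} exactly once. It rewrites $\accol{f}{\phi}{n+1}$ (with $\phi$ in the second slot) as a $\CC$-combination of order-$n$ transvectants whose four arguments are $\dtau f$, $\dz\phi$, $\dz f$, $\dtau\phi$. I then split each occurrence of $\dtau$ by means of the identity $\dtau=d-\tfrac{1}{4}\Eisen_1\dz$ coming from \eqref{defd}, and I organise the resulting terms so that the part which a priori leaves $\AlgQJmod$ is exactly the quantity controlled by hypothesis~\ref{item_hypdeux}.

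First I would record the following elementary fact: the operators $d$ and $\dz$ map $\AlgQJmod$ into $\AlgQJmod$ and also map the extra generator $\Eisen_1$ into $\AlgQJmod$. Indeed, stability of $\AlgQJmod$ under $d$ and $\dz$ is contained in Lemma~\ref{lemmeA} and \S~\ref{sec_fqjm}, while $\dz\Eisen_1=-\wp-\eisen_2\in\AlgQJmod$ by \eqref{eq_Euwped} and $d(\Eisen_1)=\tfrac{1}{8}\dz\wp\in\AlgQJmod$ by \eqref{eq_dEun}. Consequently, setting $\alpha_\psi:=d(\psi)$ and $\beta_\psi:=\dz\psi$, we have for every $\psi\in\AlgQJmod\cup\{\Eisen_1\}$ that $\alpha_\psi,\beta_\psi\in\AlgQJmod$ and $\dtau\psi=\alpha_\psi-\tfrac{1}{4}\Eisen_1\beta_\psi$.

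Now fix $f\in\AlgQJmod$ and let $\phi$ be either an element $g$ of $\AlgQJmod$ or the function $\Eisen_1$. Substituting $\dtau f=\alpha_f-\tfrac14\Eisen_1\beta_f$ and $\dtau\phi=\alpha_\phi-\tfrac14\Eisen_1\beta_\phi$ into \eqref{rectrans} and expanding by bilinearity of $\accol{}{}{n}$ yields
\[
\accol{f}{\phi}{n+1}=\Bigl(\accol{\alpha_f}{\beta_\phi}{n}-\accol{\beta_f}{\alpha_\phi}{n}\Bigr)-\tfrac{1}{4}\Bigl(\accol{\beta_f\Eisen_1}{\beta_\phi}{n}-\accol{\beta_f}{\beta_\phi\Eisen_1}{n}\Bigr),
\]
where $\alpha_f,\beta_f,\alpha_\phi,\beta_\phi$ all lie in $\AlgQJmod$. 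By hypothesis~\ref{item_hypun} the first parenthesis belongs to $\AlgQJmod$, and the second parenthesis is precisely the expression of hypothesis~\ref{item_hypdeux} for the pair $(\beta_f,\beta_\phi)$ of elements of $\AlgQJmod$, so it too belongs to $\AlgQJmod$. Hence $\accol{f}{\phi}{n+1}\in\AlgQJmod$; specialising $\phi=g$ gives $\accol{f}{g}{n+1}\in\AlgQJmod$ and $\phi=\Eisen_1$ gives $\accol{f}{\Eisen_1}{n+1}\in\AlgQJmod$.

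The computation is entirely routine, so there is no serious obstacle; the one point requiring care is the grouping in the displayed identity. After substituting $\dtau=d-\tfrac14\Eisen_1\dz$, the two terms carrying the spurious factor $\Eisen_1$ must be kept together, because neither $\accol{\beta_f\Eisen_1}{\beta_\phi}{n}$ nor $\accol{\beta_f}{\beta_\phi\Eisen_1}{n}$ is known individually to remain in $\AlgQJmod$ — only their difference is, and that is exactly what~\ref{item_hypdeux} supplies. It is also this uniform treatment of the ``$g$-case'' and the ``$\Eisen_1$-case'' — possible because $d$ and $\dz$ act on $\Eisen_1$ just as they act on an element of $\AlgQJmod$ — that makes Lemma~\ref{lemmeB} the natural inductive step in a proof, by induction on $n$, that $\AlgQJmod$ is stable under all the transvectants $\accol{}{}{n}$, with the low orders handled by Lemma~\ref{lemmeA}.
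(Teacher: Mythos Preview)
Your proof is correct and follows essentially the same route as the paper: apply the recurrence \eqref{rectrans} once, substitute $\dtau=d-\tfrac14\Eisen_1\dz$, and regroup so that the ``good'' part is handled by~\ref{item_hypun} and the $\Eisen_1$-contaminated part is precisely the difference controlled by~\ref{item_hypdeux}, treating $g\in\AlgQJmod$ and $\Eisen_1$ uniformly via the fact that $d$ and $\dz$ send both into $\AlgQJmod$.
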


\begin{proof}
By the recurrence formula \eqref{rectrans}, we have 
\begin{align*}
\accol{f}{g}{n+1} &=\accol{\dtau f}{\dz g}{n}-\accol{\dz f}{\dtau g}{n}\\
&=-\tfrac{1}{4}\left(\accol{\dz(f)\Eisen_1}{\dz(g)}{n}-\accol{\dz(f)}{\dz(g)\Eisen_1}{n}\right)+\left(\accol{d(f)}{\dz(g)}{n}- \accol{\dz(f)}{d(g)}{n}\right).
\end{align*}

Now, \(\accol{\dz(f)\Eisen_1}{\dz(g)}{n}-\accol{\dz(f)}{\dz(g)\Eisen_1}{n}\in\AlgQJmod\) according to hypothesis~\ref{item_hypdeux} applied to the elements \(\dz(f)\) and \(\dz(g)\) of \(\AlgQJmod\). Similarly, since \(d(f)\) and \(d(g)\) belong to \(\AlgQJmod\) according to Lemma~\ref{lemmeA}, the difference \(\{d(f), \dz(g)\}_n- \{\dz(f), d(g)\}_n\) is also an element of \(\AlgQJmod\) by hypothesis~\ref{item_hypun}. We conclude that \(\accol{f}{g}{n+1}\in\AlgQJmod\).
The same argument applies to \(f \in \AlgQJmod\) and \(g =\Eisen_1\) since \(\dz(\Eisen_1)\) and \(d(\Eisen_1)\) are elements of \(\AlgQJmod\) according to~\eqref{eq_Euwped} and \eqref{eq_dEun}. We thus have \(\accol{f}{\Eisen_1}{n+1}\in\AlgQJmod\), which completes the proof.
\end{proof}

\begin{lem}\label{lemmeC}
For any \(n \geq 1\) and all \(f,g\in\AlgQJS\), we have:
\begin{multline*}
\accol{f\Eisen_1}{g}{n}-\accol{f}{g\Eisen_1}{n}=f\accol{\Eisen_1}{g}{n} +(-1)^{n-1} g\accol{\Eisen_1}{f}{n}\\
-\sum_{i=1}^{n-1}
\binom{n}{i} \left(
\accol*{\accol{f}{\Eisen_1}{i}}{g}{n-i}+(-1)^{n-1}\accol*{\accol{g}{\Eisen_1}{i}}{f}{n-i}
\right).
\end{multline*}
\end{lem}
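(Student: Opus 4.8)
The plan is to treat this as a purely formal identity about the transvectant brackets \(\accol{}{}{n}\) on \(\AlgQJS\), relying only on their definition~\eqref{eq_TransQ}, the associativity relation~\eqref{asstrans}, and the sign-skew-symmetry
\[
\accol{b}{a}{m}=(-1)^m\accol{a}{b}{m}\qquad(a,b\in\AlgQJS,\ m\geq 0),
\]
which follows immediately from~\eqref{eq_TransQ} by the substitution \(r\mapsto m-r\) in the defining sum. The one extra thing to notice at the outset is that \(\accol{}{}{0}\) is simply the product, so that \(f\Eisen_1=\accol{f}{\Eisen_1}{0}\) and \(g\Eisen_1=\Eisen_1 g=\accol{\Eisen_1}{g}{0}\). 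This is what lets the two quantities \(\accol{f\Eisen_1}{g}{n}\) and \(\accol{f}{g\Eisen_1}{n}\) be seen as the \(r=0\) terms of the two sides of an associativity relation.

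First I would apply~\eqref{asstrans} to the ordered triple \((f,\Eisen_1,g)\), giving \(\sum_{r=0}^n\binom{n}{r}\accol*{\accol{f}{\Eisen_1}{r}}{g}{n-r}=\sum_{r=0}^n\binom{n}{r}\accol*{f}{\accol{\Eisen_1}{g}{r}}{n-r}\). The \(r=0\) terms are \(\accol{f\Eisen_1}{g}{n}\) on the left and \(\accol{f}{g\Eisen_1}{n}\) on the right, so transposing them isolates
\[
\accol{f\Eisen_1}{g}{n}-\accol{f}{g\Eisen_1}{n}=\sum_{r=1}^{n}\binom{n}{r}\accol*{f}{\accol{\Eisen_1}{g}{r}}{n-r}-\sum_{r=1}^{n}\binom{n}{r}\accol*{\accol{f}{\Eisen_1}{r}}{g}{n-r}.
\]
Next I would peel off the \(r=n\) term from each of these two sums. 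Since \(\accol{a}{b}{0}=ab\), the \(r=n\) term of the first sum is \(f\accol{\Eisen_1}{g}{n}\), and that of the (subtracted) second sum is \(-\accol{f}{\Eisen_1}{n}\,g=(-1)^{n-1}g\accol{\Eisen_1}{f}{n}\) after one use of skew-symmetry and \(-(-1)^n=(-1)^{n-1}\); these are exactly the two boundary terms of the claimed formula, and what remains are the sums \(\sum_{i=1}^{n-1}\binom{n}{i}\accol*{f}{\accol{\Eisen_1}{g}{i}}{n-i}\) and \(-\sum_{i=1}^{n-1}\binom{n}{i}\accol*{\accol{f}{\Eisen_1}{i}}{g}{n-i}\).

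The second of these matches the statement on the nose, so the last step is to rewrite the first: applying skew-symmetry first to \(\accol{\Eisen_1}{g}{i}=(-1)^i\accol{g}{\Eisen_1}{i}\) and then to the outer bracket gives \(\accol*{f}{\accol{\Eisen_1}{g}{i}}{n-i}=(-1)^i(-1)^{n-i}\accol*{\accol{g}{\Eisen_1}{i}}{f}{n-i}=(-1)^n\accol*{\accol{g}{\Eisen_1}{i}}{f}{n-i}\), and \((-1)^n=-(-1)^{n-1}\) turns \(\sum_{i=1}^{n-1}\binom{n}{i}\accol*{f}{\accol{\Eisen_1}{g}{i}}{n-i}\) into \(-(-1)^{n-1}\sum_{i=1}^{n-1}\binom{n}{i}\accol*{\accol{g}{\Eisen_1}{i}}{f}{n-i}\), which is precisely the remaining term in the statement. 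The whole argument is routine; the only real care needed is the sign bookkeeping — tracking \((-1)^i(-1)^{n-i}=(-1)^n\) and the repeated \(-(-1)^n=(-1)^{n-1}\) — together with the index ranges when the \(r=0\) and \(r=n\) boundary terms are split off. As a consistency check one can verify \(n=1\) directly from the Leibniz rule for \(\accol{}{}{1}=\dtau\wedge\dz\), where the sum over \(i\) is empty and the identity reduces to \(\accol{f\Eisen_1}{g}{1}-\accol{f}{g\Eisen_1}{1}=f\accol{\Eisen_1}{g}{1}+g\accol{\Eisen_1}{f}{1}\).
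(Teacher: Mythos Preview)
Your proof is correct and follows essentially the same approach as the paper: both arguments reduce the identity to the associativity relation~\eqref{asstrans} for the triple \((f,\Eisen_1,g)\), using that \(\accol{}{}{0}\) is the product and that \(\accol{}{}{m}\) is \((-1)^m\)-symmetric. The only cosmetic difference is direction: the paper rewrites the claimed identity until it \emph{becomes} the associativity relation, whereas you start from associativity and unpack it into the claim.
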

\begin{proof}
On one hand, we can rewrite each product as a bracket \(\accol{}{}{0}\), on the other hand, for all \(0\leq j\leq n\), the bracket \(\accol{}{}{j}\) is \((-1)^j\)-symmetric. The desired equality can thus be reformulated as
\begin{multline*}
\accol*{\accol{f}{\Eisen_1}{0}}{g}{n}-\accol*{f}{\accol{\Eisen_1}{g}{0}}{n}=\accol*{f}{\accol{\Eisen_1}{g}{n}}{0}-\accol*{\accol{f}{\Eisen_1}{n}}{g}{0}\\
-\sum_{i=1}^{n-1}\binom ni\accol*{\accol{f}{\Eisen_1}{i}}{g}{n-i}+\sum_{i=1}^{n-1}\binom ni\accol*{f}{\accol{\Eisen_1}{g}{i}}{n-i}
\end{multline*}
that is,
\[
\sum_{i=0}^n\binom ni\accol*{\accol{f}{\Eisen_1}{i}}{g}{n-i}=\sum_{i=0}^{n}\binom ni\accol*{f}{\accol{\Eisen_1}{g}{i}}{n-i}.
\]
According to \eqref{star} and \eqref{asstrans}, this identity translates the equality \((f\star\Eisen_1)\star g=f\star(\Eisen_1\star g)\). This last equality holds for all \(f\) and \(g\) in \(\AlgQJS\) due to point~\ref{item_schubert} of Proposition~\ref{TransQ}. 
\end{proof}

\begin{lem}\label{lemmeD}
We have \(\accol{f}{g}{n}\in\AlgQJmod\) and \(\accol{f}{\Eisen_1}{n}\in\AlgQJmod\) for all \(n \geq 1\) and all \(f,g\in \AlgQJmod\).
\end{lem}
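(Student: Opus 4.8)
The plan is to prove the two statements together by strong induction on $n\geq 1$, feeding Lemmas~\ref{lemmeA}, \ref{lemmeB} and~\ref{lemmeC} into one another. For $n\geq 1$ let $(\mathrm P_n)$ denote the assertion: \emph{for all $f,g\in\AlgQJmod$ one has $\accol{f}{g}{n}\in\AlgQJmod$ and $\accol{f}{\Eisen_1}{n}\in\AlgQJmod$}. The base case $(\mathrm P_1)$ is exactly Lemma~\ref{lemmeA}.

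For the inductive step, assume $(\mathrm P_1),\dots,(\mathrm P_n)$ and deduce $(\mathrm P_{n+1})$. By Lemma~\ref{lemmeB} it is enough to verify that $n$ satisfies hypotheses~\ref{item_hypun} and~\ref{item_hypdeux}. Hypothesis~\ref{item_hypun} is precisely the first half of $(\mathrm P_n)$. For hypothesis~\ref{item_hypdeux}, fix $f,g\in\AlgQJmod$ and apply the identity of Lemma~\ref{lemmeC}, which expresses $\accol{f\Eisen_1}{g}{n}-\accol{f}{g\Eisen_1}{n}$ as $f\accol{\Eisen_1}{g}{n}+(-1)^{n-1}g\accol{\Eisen_1}{f}{n}$ minus a sum over $1\leq i\leq n-1$ of the terms $\binom ni\bigl(\accol*{\accol{f}{\Eisen_1}{i}}{g}{n-i}+(-1)^{n-1}\accol*{\accol{g}{\Eisen_1}{i}}{f}{n-i}\bigr)$. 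Each of these pieces lies in $\AlgQJmod$: by $(\mathrm P_n)$ and the $(-1)^n$-symmetry of $\accol{}{}{n}$ one has $\accol{\Eisen_1}{g}{n}=(-1)^n\accol{g}{\Eisen_1}{n}\in\AlgQJmod$, so, $\AlgQJmod$ being a subalgebra, the two leading terms belong to it; in the sum the indices satisfy $1\leq i\leq n-1$ and $1\leq n-i\leq n-1$, so $\accol{f}{\Eisen_1}{i}\in\AlgQJmod$ by $(\mathrm P_i)$ and then $\accol*{\accol{f}{\Eisen_1}{i}}{g}{n-i}\in\AlgQJmod$ by $(\mathrm P_{n-i})$ applied to the pair $\accol{f}{\Eisen_1}{i},g\in\AlgQJmod$, the term with $f$ and $g$ exchanged being identical. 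Thus $n$ satisfies~\ref{item_hypun} and~\ref{item_hypdeux}, Lemma~\ref{lemmeB} yields $(\mathrm P_{n+1})$, and the induction is complete.

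Once Lemmas~\ref{lemmeA}--\ref{lemmeC} are in place the proof is mostly bookkeeping; the only point requiring attention is that the induction must be strong rather than simple, because the sum in Lemma~\ref{lemmeC} invokes $(\mathrm P_i)$ and $(\mathrm P_{n-i})$ for every intermediate $i$, not merely $(\mathrm P_n)$, and one must keep track of the $(-1)^j$-symmetry of $\accol{}{}{j}$ in order to move $\Eisen_1$ from one slot to the other, so that the statements of Lemmas~\ref{lemmeA} and~\ref{lemmeB}, which are phrased with $\Eisen_1$ in the second argument, can be applied.
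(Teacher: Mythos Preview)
Your proof is correct and follows essentially the same strategy as the paper: strong induction on $n$, with the base case given by Lemma~\ref{lemmeA}, and the inductive step obtained by using Lemma~\ref{lemmeC} together with the induction hypotheses $(\mathrm P_1),\dots,(\mathrm P_n)$ to check hypothesis~\ref{item_hypdeux} of Lemma~\ref{lemmeB}. Your write-up is more detailed than the paper's (you make explicit the use of the $(-1)^j$-symmetry and the need for strong rather than simple induction), but the underlying argument is identical.
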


\begin{proof} 
We proceed by induction on \(n\). The case \(n=1\) is shown in Lemma~\ref{lemmeA}. If the property is true for all \(1 \leq i \leq n\), Lemma~\ref{lemmeC} then shows that for all \(f, g\in \AlgQJmod\), we have \(\accol{f\Eisen_1}{g}{n}-\accol{f}{g\Eisen_1}{n}\in\AlgQJmod\). We conclude with Lemma~\ref{lemmeB} that \(\accol{f}{g}{n+1}\in\AlgQJmod\) and \(\accol{f}{\Eisen_1}{n+1}\in\AlgQJmod\) for all \(f, g \in \AlgQJmod\).
\end{proof}

We have thus proven that:
\begin{thm}\label{TransB}
The sequence \((\dfrac 1{n!}\accol{}{}{n})_{n\geq 0}\) is a formal deformation of \(\AlgQJmod\).
\end{thm}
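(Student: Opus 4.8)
The plan is to obtain the formal deformation of $\AlgQJmod$ by \emph{restriction} from the formal deformation of the ambient algebra $\AlgQJS$ already established in Proposition~\ref{TransQ}. Recall that part~\ref{item_schubert} of that proposition says precisely that the star product $f\star g=\sum_{n\geq 0}\tfrac{1}{n!}\accol{f}{g}{n}\planck^n$ is associative on $\AlgQJS[[\planck]]$, with $\accol{}{}{0}$ the ordinary multiplication. So the only point to verify is that this whole structure descends to $\AlgQJmod[[\planck]]$, and for that I would invoke Lemma~\ref{lemmeD}: since $\accol{f}{g}{n}\in\AlgQJmod$ whenever $f,g\in\AlgQJmod$, each $\planck^n$-coefficient of $a\star b$ for $a,b\in\AlgQJmod[[\planck]]$ is a finite $\CC$-linear combination of elements $\accol{a_i}{b_j}{m}\in\AlgQJmod$, so the star product restricts to a well-defined $\CC[[\planck]]$-bilinear map $\AlgQJmod[[\planck]]\times\AlgQJmod[[\planck]]\to\AlgQJmod[[\planck]]$.

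This restricted product reduces modulo $\planck$ to the multiplication of $\AlgQJmod$ (because $\accol{}{}{0}$ is the product), and it is associative, associativity being simply inherited from $(\AlgQJS[[\planck]],\star)$ since $\AlgQJmod[[\planck]]$ is now a $\CC[[\planck]]$-subalgebra of it. By the definition of a formal deformation this exhibits $(\tfrac{1}{n!}\accol{}{}{n})_{n\geq 0}$ as a formal deformation of $\AlgQJmod$, which is the assertion. One may note in passing that the first-order term $\accol{}{}{1}=\dtau\wedge\dz$ does restrict to a Poisson bracket on $\AlgQJmod$: writing $\dtau=d-\tfrac14\Eisen_1\dz$ with $d$ the derivation of~\eqref{defd} stabilizing $\AlgQJmod$, the $\Eisen_1\dz$ contributions cancel and $\accol{f}{g}{1}=d(f)\dz(g)-\dz(f)d(g)$, which is exactly the case $n=1$ of Lemma~\ref{lemmeD}.

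Thus, once Lemma~\ref{lemmeD} is available, the statement is immediate, and the genuine difficulty lies entirely in the chain of lemmas preceding it. The crux is the stability of $\AlgQJmod$ under the transvectants $\accol{}{}{n}$: this cannot be checked term by term because $\dtau$ does not preserve $\AlgQJmod$, which forces the inductive scheme of Lemma~\ref{lemmeB}. That induction needs, on top of the desired statement $\accol{f}{g}{n}\in\AlgQJmod$, the auxiliary \emph{twisted} assertion $\accol{f\Eisen_1}{g}{n}-\accol{f}{g\Eisen_1}{n}\in\AlgQJmod$, and the loop is closed precisely by the algebraic identity of Lemma~\ref{lemmeC}, itself a direct consequence of the associativity~\eqref{asstrans} of the Moyal product, i.e.\ again of part~\ref{item_schubert} of Proposition~\ref{TransQ}. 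So the main obstacle is already behind us by the time we reach the present statement.
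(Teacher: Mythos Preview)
Your proof is correct and follows exactly the paper's own argument: the paper's one-line proof simply says the statement follows immediately from Lemma~\ref{lemmeD} together with point~\ref{item_schubert} of Proposition~\ref{TransQ}, which is precisely the restriction argument you spell out. Your additional paragraphs explaining the role of the chain of lemmas and the explicit $n=1$ computation are helpful commentary but not needed for the proof itself.
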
 
\begin{proof}
This follows immediately from the above lemma and point~\ref{item_schubert} of Proposition~\ref{TransQ}.
\end{proof}

\begin{rem}
The subalgebras \(\AlgQJell\) and \(\AlgJS\) are not stable under \(\left(\accol{}{}{n}\right)_{n\geq0}\) since, for example, \(\{\eisen_4,\wp\}_1\notin\AlgQJell\) according to \eqref{eq_ramae4sanse6}. The brackets \(\accol{}{}{n}\) vanish on \(\mathcal{\AlgM}\) for all \(n\geq 1\). The Poisson structure on \(\AlgQJmod\) defined by the bracket \(\accol{}{}{1}\) is studied in \cite{zhou}. We summarize the situation on page~\pageref{fig_recap}.
\end{rem}

\begin{rem}
With point~\ref{item_beethoven} of Proposition~\ref{TransQ}, Theorem~\ref{TransB} allows us to construct, starting from two quasi-Jacobi forms of quasimodular type with respective weights \(k\) and \(\ell\), a new form in \(\AlgQJmod\) of weight \(k+\ell+3n\), for all \(n \geq 0\). This is a process comparable to that obtained in Sections~\ref{RCquasiell} and~\ref{RCsing} with the Rankin-Cohen brackets on quasi-Jacobi forms of quasielliptic type and on elliptic forms, the increase in weight being \(2n\) in those cases.
\end{rem}

\begin{rem}
The original goal of this study was the construction of formal deformations on an algebra containing elliptic functions obtained from the Weierstra\ss{} function and its derivative. This motivated us to introduce the algebra \(\CC[\wp,\dz\wp,\eisen_4]\) and subsequently \(\CC[\wp,\dz\wp,\eisen_4,\Eisen_1,\eisen_2]\) in order to achieve stability under differentiation. It is then natural to consider the algebras that we have called \emph{quasimodular-type} and \emph{elliptic-type}. A shift in context consists in taking as a starting point the notion of Jacobi forms with possibly nonzero index, possibly defined on a subgroup of the Jacobi group, and then seeking to construct formal deformations in this setting. This work remains to be done.
\end{rem}

\vspace{\fill}
\pagebreak[0]
\vspace{-\fill}

\appendix

\section{Stability of the different algebras under the different brackets}

\begin{tikzpicture}
\node[rotate=-90] at (0,0) {%
\begin{minipage}{0.8\textheight}
\scalebox{0.84}{
\begin{tabularx}{\textwidth}{CCCCCCCCCCC}
&&&& \multicolumn{3}{c}{\(\boxed{\AlgQJS}\)} &&&&\\
&&&& \rose{\tikzmark{a}{\(\left(\crochet{}{}{n}\right)_{n\geq0}\)}} & \rose{\tikzmark{c}{\(\left(\Crochet{}{}{n}\right)_{n\geq0}\)}}& \bleu{\tikzmark{i}{\(\left(\accol{}{}{n}\right)_{n\geq0}\)}} &&&&\\
&&&&&&&&&&\\[10ex]
\rose{\(\left(\Crochet{}{}{n}\right)_{n\geq0}\)}  & \bleu{\(\left(\accol{}{}{n}\right)_{n\geq0}\)} & \rose{\tikzmark{b}{\(\left(\crochet{}{}{n}\right)_{n\geq0}\)}} & \phantom{xxxxxx} & \rose{\(\left(\crochet{}{}{n}\right)_{n\geq0}\)} & \bleu{\(\left(\accol{}{}{n}\right)_{n\geq0}\)} & \rose{\tikzmark{d}{\(\left(\Crochet{}{}{n}\right)_{n\geq0}\)}} & \phantom{xxxxxx} & \rose{\tikzmark{e}{\(\left(\Crochet{}{}{n}\right)_{n\geq0}\)}} & \bleu{\tikzmark{h}{\(\left(\accol{}{}{n}\right)_{n\geq0}\)}} & \rose{\(\left(\crochet{}{}{n}\right)_{n\geq0}\)}\\
\multicolumn{3}{c}{\(\boxed{\AlgQJell}\)} && \multicolumn{3}{c}{\(\boxed{\AlgJS}\)} && \multicolumn{3}{c}{\(\quad\boxed{\AlgQJmod}\)} \\
&&&&&&&&&&\\[10ex]
&&&&& \rose{\tikzmark{g}{\(\left(\crochet{}{}{n}\right)_{n\geq0}\)}} & \bleu{\tikzmark{j}{\(\left(0\right)_{n\geq 1}\)}} &&&&\\
&&&&&  \multicolumn{2}{c}{\(\boxed{\AlgM}\)} &&&&
\end{tabularx}
\MeasureLastTable{11} 
\rose{\CrossOut{31}}
\bleu{\CrossOut{32}}
\rose{\CrossOut{35}}
\bleu{\CrossOut{36}}
\rose{\CrossOut{41}}
\label{fig_recap}
\rose{\link{b}{a}}
\rose{\link{g}{b}}
\rose{\link{g}{d}}
\rose{\link{d}{c}}
\rose{\link{d}{e}}
\bleu{\link{h}{i}}
\bleu{\link{j}{h}}
}
\vspace*{10ex}

The arrows indicate extensions; when a bracket is crossed out, it means that it does not stabilize the algebra.
\end{minipage}
};
\end{tikzpicture}

\FloatBarrier

\newpage

\section{Dimensions of the subspaces of quasi-Jacobi forms of index zero}

Let \(k\) be an integer, we define \(\pa(k)=\frac{1+(-1)^k}{2}\) and \(\ia(k)=\frac{1-(-1)^k}{2}\).

\begin{thm}\label{thm_dimsousev}
Let \(k \geq 0\) be an integer. The dimensions \(\fds(k)\) of \(\JS{k}\), \(\fdell(k)\) of \(\JSell{k}\), \(\fdmod(k)\) of \(\JSmod{k}\), and \(\fdq(k)\) of \(\QJSpoids{k}\) are given by
\begin{align*}
\fds(k) &= \frac{107}{288}+\frac{3}{16}k+\frac{1}{48}k^2+\frac{9}{32}(-1)^k+\frac{1}{16}(-1)^kk+\frac{1}{8}\left(\pa(k)+\ia(k)\ic\right)\ic^k+\frac{1}{9}\left(\jc^k+\jc^{2k}\right)\\
\fdell(k) &= \frac{175}{288}+\frac{15}{32}k+\frac{5}{48}k^2+\frac{1}{144}k^3+\frac{5}{32}(-1)^k+\frac{1}{32}(-1)^kk+\frac{1}{8}\pa(k)\ic^k+\frac{1}{27}(1-\jc)\jc^k +\frac{1}{27}(2+\jc)\jc^{2k}\\
\fdmod(k) &=\begin{multlined}[t]\frac{121}{288}+\frac{55}{192}k+\frac{11}{192}k^2+\frac{1}{288}k^3+\frac{13}{32}(-1)^k+\frac{11}{64}(-1)^kk+\frac{1}{64}(-1)^kk^2+\frac{1}{16}\left(\pa(k)+\ia(k)\ic\right)\ic^k\\+\frac{1}{27}(2+\jc)\jc^k +\frac{1}{27}(1-\jc)\jc^{2k}\end{multlined}\\
\fdq(k) &=\begin{multlined}[t]\frac{4267}{6912}+\frac{55}{96}k+\frac{199}{1152}k^2+\frac{1}{48}k^3+\frac{1}{1152}k^4+\frac{63}{256}(-1)^k+\frac{3}{32}(-1)^kk+\frac{1}{128}(-1)^kk^2\\
+\frac{1}{16}\pa(k)\ic^k+\frac{1}{27}\left(\jc^k+\jc^{2k}\right)
\end{multlined}
\end{align*}
where \(\jc=\exp(2\ic\pi/3)\).
\end{thm}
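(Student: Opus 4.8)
The plan is to exploit the polynomial algebra descriptions established in Theorems~\ref{prop_strucJS} and~\ref{thm_strucQJ} and their subalgebra analogues to reduce each dimension count to a lattice-point count, then extract a closed form via the theory of quasi-polynomials. Concretely, by Theorem~\ref{thm_strucQJ} the graded piece \(\QJSpoids{k}\) has for basis the monomials \(\wp^a(\dz\wp)^b\eisen_4^c\Eisen_1^d\eisen_2^e\) with \(2a+3b+4c+d+2e=k\) (the weights being read off Table~\vref{tab_explefonda}), so \(\fdq(k)\) is the number of \((a,b,c,d,e)\in\N^5\) solving that equation, i.e.\ the coefficient of \(z^k\) in
\[
\frac{1}{(1-z)(1-z^2)^2(1-z^3)(1-z^4)}.
\]
Similarly, \(\fds(k)=[z^k]\bigl((1-z^2)(1-z^3)(1-z^4)\bigr)^{-1}\) by~\eqref{eq_base} (already recorded in Proposition~\ref{prop_dimJS}); \(\fdell(k)=[z^k]\bigl((1-z)(1-z^2)(1-z^3)(1-z^4)\bigr)^{-1}\) from \(\AlgQJell=\CC[\wp,\dz\wp,\eisen_4,\Eisen_1]\); and \(\fdmod(k)=[z^k]\bigl((1-z^2)^2(1-z^3)(1-z^4)\bigr)^{-1}\) from \(\AlgQJmod=\CC[\wp,\dz\wp,\eisen_4,\eisen_2]\).

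The second step is to do the partial fraction decomposition of each rational function over \(\CC\). The only poles are at the roots of unity of order dividing \(12\): a pole at \(z=1\) of order equal to the number of generators (so order \(5,4,3,4\) respectively), a pole at \(z=-1\) of order equal to the number of even-weight generators among \(\{2,2,4\}\)-type factors, a simple or double pole at the primitive cube roots \(\jc,\jc^2\), and a pole at \(\pm\ic\) coming from the factor \((1-z^4)\). Reading off the coefficient of \(z^k\) then gives, for each algebra, an expression of the shape (polynomial in \(k\)) \(+\) \((-1)^k\)(polynomial in \(k\)) \(+\) \(\ic^k\)(affine in \(k\)) \(+\) \(\alpha\,\jc^k+\beta\,\jc^{2k}\), exactly the form displayed in the statement, with \(\pa(k)=\tfrac{1+(-1)^k}{2}\) and \(\ia(k)=\tfrac{1-(-1)^k}{2}\) used to package the \(\ic^{k}\) contribution together with the real part of the \(\ic,-\ic\) residues. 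The rational coefficients are then pinned down by a finite comparison: since the answer is a quasi-polynomial of period \(12\) and degree at most \(4\), it suffices to check equality of both sides for \(k=0,1,\dots,12\) (or enough initial values), which one does by expanding the generating series — this is the mechanical verification that closes the proof.

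The main obstacle is purely bookkeeping: correctly computing the higher-order residues at \(z=1\) for \(\fdell,\fdmod,\fdq\) (order \(3\) or \(4\)), which requires differentiating the regular part of the rational function up to three times and keeping track of the resulting binomial-type coefficients \(\binom{k+j}{j}\); a sign or factorial slip there is the easiest way to go wrong, so I would cross-check the leading term against the known asymptotics \(\fdq(k)\sim k^4/1152\) (volume of the relevant simplex, \(1/(4!\cdot 1\cdot 2\cdot 2\cdot 3\cdot 4)\)) and verify the first twelve values numerically. A secondary point worth a sentence is the consistency check \(\fdq(k)=\fdell(k)+\fdmod(k)-\fds(k)+(\text{correction})\)? — in fact the cleaner relation is that the generating function of \(\AlgQJS\) is that of \(\AlgQJmod\) divided by \((1-z)\) adjoined \(\Eisen_1\), i.e.\ \(G_{\AlgQJS}(z)=G_{\AlgQJmod}(z)/(1-z)\) and \(G_{\AlgQJell}(z)=G_{\AlgJS}(z)/(1-z)\), which both furnishes an independent derivation and a sanity check on the four formulas simultaneously.
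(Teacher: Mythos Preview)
Your proposal is correct and follows essentially the same route as the paper: derive the four generating series from the polynomial-algebra descriptions, apply partial fractions to obtain the quasi-polynomial form, and determine the finitely many coefficients by matching initial values. The paper states this in almost the same words (citing \texttt{PARI/GP} for the final identification); your additional sanity checks via the leading asymptotic and the relations \(G_{\AlgQJS}(z)=G_{\AlgQJmod}(z)/(1-z)\), \(G_{\AlgQJell}(z)=G_{\AlgJS}(z)/(1-z)\) are nice but not needed. One small imprecision: the poles at \(\pm\ic\) and at \(\jc,\jc^2\) are always simple here (only one factor each of \((1-z^4)\) and \((1-z^3)\)), so the corresponding contributions are constants in \(k\), not affine.
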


\begin{proof}
Using the same argument as in Proposition~\ref{prop_dimJS}, the generating series for the dimensions are
\begin{align*}
\sum_{k\in\N}\fds(k)\cdot z^k&=\frac{1}{(1-z^2)(1-z^3)(1-z^4)},\\
\sum_{k\in\N}\fdell(k)\cdot z^k&=\frac{1}{(1-z)(1-z^2)(1-z^3)(1-z^4)},\\
\sum_{k\in\N}\fdmod(k)\cdot z^k&=\frac{1}{(1-z^2)^2(1-z^3)(1-z^4)}
\shortintertext{and}
\sum_{k\in\N}\fdq(k)\cdot z^k&=\frac{1}{(1-z)(1-z^2)^2(1-z^3)(1-z^4)}.
\end{align*}

The partial fraction decomposition of the right-hand side justifies that the dimensions are of the form
\[
P_1(k)+P_{-1}(k)(-1)^k+P_{\ic}(k)\ic^k+P_{-\ic}(k)(-\ic)^k+P_{\jc}(k)\jc^k+P_{\jc^2}(k)\jc^{2k}
\]
where the \(P_\xi\) are polynomials whose degree is strictly bounded by the valuation of \(z-\xi\) in the denominator of the generating function (see for example~\cite[\texten{Theorem}~4.4.1]{zbMATH06016068}). These polynomials are easily determined by the beginning of the series expansion. We used \texttt{PARI/GP} for our calculations~\cite{PARI2}.
\end{proof}

From the formulas in Theorem~\ref{thm_dimsousev}, we can derive polynomial formulas with rational coefficients in each class of weight modulo \(12\). Such formulas allow us to obtain ``compact'' expressions for the dimensions similar to equality~\eqref{eq_dimexplicit} in Proposition~\ref{prop_dimJS}, for instance
\[
\fds^{0,\infty}(k)=\projZ*{
\dfrac{1}{144}
\left(k^3+15k^2+\begin{cases}72k+144 & \text{if \(k\) is even}\\ 63k+65 & \text{otherwise}\end{cases}\right)
}.
\]
However, such a formula is somewhat artificial, particularly because it is not unique in its form. For example, we also have
\[
\fds^{0,\infty}(k)=\projZ*{
\frac{k+3}{144}\begin{cases}(k+6)^2 & \text{if \(k\) is even}\\ (k+3)(k+9) & \text{otherwise}\end{cases}
}.
\]


\bibliographystyle{plain}
\bibliography{2024_DuMaRo}
\end{document}